\newtheorem{thm}{Theorem}[section]
\newtheorem{cor}[thm]{Corollary}
\newtheorem{lem}[thm]{Lemma}
\theoremstyle{definition}
\newtheorem{defin}[thm]{Definition}
\theoremstyle{definition}
\theoremstyle{definition}
\newtheorem{exm}[thm]{Example}
\newtheorem{remark}[thm]{Remark}
\theoremstyle{remark}
\newtheorem*{rem}{Remark}
\DeclareMathOperator*{\motimes}{\text{\raisebox{0.25ex}{\scalebox{0.7}{$\bigotimes$}}}}
\DeclareMathOperator*{\moplus}{\text{\raisebox{0.28ex}{\scalebox{0.75}{$\bigoplus$}}}}
\DeclareMathOperator*{\mvee}{\text{\raisebox{0.28ex}{\scalebox{0.7}{$\bigvee$}}}}
\DeclareMathOperator*{\msetm}{\text{\raisebox{0.28ex}{\scalebox{0.7}{$\setminus$}}}}
\DeclareMathOperator*{\mwedge}{\text{\raisebox{0.28ex}{\scalebox{0.7}{$\bigwedge$}}}}
\DeclareMathOperator*{\mprod}{\text{\raisebox{0.28ex}{\scalebox{0.67}{$\prod$}}}}
\DeclareMathOperator*{\lprod}{\text{\raisebox{0.28ex}{\scalebox{0.76}{$\prod$}}}}
\DeclareMathOperator*{\mcup}{\text{\raisebox{0.28ex}{\scalebox{0.78}{$\bigcup$}}}}
\newcommand{\mz}{\mathbb{S}^{0}}
\newcommand{\umz}{\underline{\mathbb{S}}^{0}}
\begin{document}
\def\X#1#2{r(v^{#2}\ds{\prod_{i \in #1}}{x_{i}})}
\def\skp#1{\vskip#1cm\relax}
\def\mr#1{\mathring{#1}}
\def\nm#1{\mbox{{\normalsize $#1$}}}
\def\lm#1{\mbox{{\large $#1$}}}
\def\block{\rule{2.4mm}{2.4mm}}
\def\nd{\noindent}
\def\becomes{\colon\hspace{-2,5mm}=}
\def\ds{\displaystyle}
\def\red{\color{red}}
\def\blue{\color{blue}}
\def\black{\color{black}}
\def\s{\sigma}
\numberwithin{equation}{section}
\title[Cohomology of polyhedral products]{A Cartan formula for the cohomology of  polyhedral products and its
application to the ring structure}

\author[A.~Bahri]{A.~Bahri}
\address{Department of Mathematics,
Rider University, Lawrenceville, NJ 08648, U.S.A.}
\email{bahri@rider.edu}

\author[M.~Bendersky]{M.~Bendersky}
\address{Department of Mathematics
CUNY,  East 695 Park Avenue New York, NY 10065, U.S.A.}
\email{mbenders@hunter.cuny.edu}

\author[F.~R.~Cohen]{F.~R.~Cohen}
\address{Department of Mathematics,
University of Rochester, Rochester, NY 14625, U.S.A.}
\email{cohf@math.rochester.edu}

\author[S.~Gitler]{S.~Gitler}
\address{}

\subjclass[2010]{Primary:  52B11, 55N10, 14M25, 55U10, 13F55,  
Secondary: 14F45, 55T10}
\keywords{polyhedral product, cohomology, polyhedral smash product}

\begin{abstract}
 We give a geometric method for determining the cohomology groups and the product structure of
a polyhedral product $Z\big(K;(\underline{X}, \underline{A})\big)$, under suitable 
freeness conditions  or with coefficients taken in a  field $k$. This is done by considering first the special case 
$(X_{i},A_{i }) = (B_{i}\vee C_{i}, B_{i}\vee E_{i})$ for all $i$, where $E_{i}\hookrightarrow C_{i} $ is a 
null homotopic inclusion, and then deriving a decomposition for these polyhedral products which resembles 
a Cartan  formula. The result is then generalized to arbitrary  $Z\big(K;(\underline{X}, \underline{A})\big)$.
This leads to a direct computation of the Hilbert-Poincar\'e series for $Z\big(K;(\underline{X}, \underline{A})\big)$.
Other applications are included 

The product structure on $\widetilde{H}^{\ast}\big(Z\big(K;(\underline{X}, \underline{A})\big)\big)$ is described in terms
of the additive generators, labelled via the Cartan decomposition. This is done by combining with the Wedge Lemma,
the description in \cite{bbcg3} of the $\ast$-product structure of a polyhedral product which is induced from the stable 
splitting
\skp{0.1}
The description given suffices to enable explicit calculations.

\end{abstract}

\maketitle
\tableofcontents
\section{\hspace{2mm}Introduction}\label{sec:introduction}
Polyhedral products $Z\big(K;(\underline{X}, \underline{A})\big)$, \cite{bbcg1},  are defined
for a simplicial complex $K$ on the vertex set $[m] = \{1,2,\ldots,m\}$, and a family of pointed CW pairs
$$(\underline{X}, \underline{A}) = \big\{(X_{i},A_{i}): i =1,2,\ldots,m\big\}.$$

{\let\thefootnote\relax\footnote{\hspace{-4.5mm}This work was supported in part by grant 426160 from Simons Foundation. 
The authors are grateful to the Fields Institute for a conducive environment during the 
Thematic Program: Toric Topology and Polyhedral Products. \mbox{The first author acknowledges Rider 
University for a Spring 2020 research leave.}}}

\nd They are natural subspaces of the Cartesian product $X_{1}\times X_{2}\times\cdots\times X_{m}$, in
such a way that if $K = \Delta^{m-1}$, the $(m-1)$-simplex, then 
$$Z\big(K;(\underline{X}, \underline{A})\big) = X_{1}\times X_{2}\times\cdots\times X_{m}.$$
More specifically, we consider $K$ to be a category
where the objects are the simplices of $K$ and the morphisms $d_{\sigma,\tau}$ are the inclusions 
$\sigma \subset \tau$.  A  polyhedral product is given as the colimit of a diagram 
$D_{(\underline{X}, \underline{A})}: K \to CW_{\ast}$, where at each $\sigma \in K$, we set 
\begin{equation}\label{eqn:d.sigma}
D_{(\underline{X}, \underline{A})}(\sigma) =\lprod^m_{i=1}W_i,\quad {\rm where}\quad
W_i=\left\{\begin{array}{lcl}
X_i  &{\rm if} & i\in \sigma\\
A_i &{\rm if} & i\in [m]-\sigma.
\end{array}\right.
\end{equation}

\nd  Here, the colimit is a union given by 
$$Z(K; (\underline{X}, \underline{A})) = \mcup_{\sigma \in K}D_{(\underline{X}, \underline{A})}(\sigma),$$
 but the full colimit structure is used heavily in  the development of the elementary theory.
Notice that when $\sigma \subset \tau$  then 
$D_{(\underline{X}, \underline{A})}(\sigma) \subseteq D_{(\underline{X}, \underline{A})}(\tau)$. 
In the case that $K$ itself is a simplex, 
$$Z(K; (\underline{X}, \underline{A}))\; =\; \lprod\limits_{i=1}^{m}{X_i}.$$ 

Polyhedral products were formulated first for the case $(X_i,A_i) = (D^2,S^1)$  by V.~Buchstaber
and T.~Panov in \cite{bp3}; they called their spaces {\em moment-angle complexes\/}.

\skp{0.2}
In a way entirely similar to that above, a related space $\widehat{Z}(K; (\underline{X}, \underline{A}))$, called the 
{\em polyhedral  smash product\/},  is defined by replacing the Cartesian product everywhere above by the 
smash product. That is,
$$\widehat{D}_{(\underline{X}, \underline{A})}(\sigma) =\mwedge ^m_{i=1}W_i \quad {\rm and} \quad
\widehat{Z}(K; (\underline{X}, \underline{A})) = 
\mcup_{\sigma \in K}\widehat{D}_{(\underline{X}, \underline{A})}(\sigma)$$
\nd  with
$$\widehat{Z}(K; (\underline{X}, \underline{A})) \; \subseteq \; \mwedge\limits_{i=1}^{m}{X_i}.$$
The polyhedral smash product is related to the polyhedral product by the stable decomposition 
discussed  in \cite{bbcg1} and \cite{bbcg2}. We denote by $(\underline{X}, \underline{A})_J$
the restricted family of CW-pairs $\big\{(X_j,A_j\big)\}_{j\in J}$, and by $K_J$, the full subcomplex
on $J \subset [m]$. 
\begin{thm}\cite[Theorem 2.10]{bbcg2}\label{thm:bbcgsplitting}
Let $K$ be an abstract simplicial complex  on vertices $[m]$.  Given a family $\{(X_j , A_j)\}_{j=1}^m$ of
pointed  pairs  of CW-complexes, there is a natural pointed homotopy equivalence 
\begin{equation}\label{eqn:splitting}
H\colon \Sigma{\big(Z\big(K;(\underline{X},\underline{A})\big)\big)} \longrightarrow 
\Sigma\big(\mvee_{J\subseteq [m]}\widehat{Z}\big(K_J; (\underline{X}, \underline{A})_J\big)\big)
\end{equation}
\end{thm}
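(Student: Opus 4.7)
The plan is to apply a natural suspension splitting of Cartesian products to each term of the diagram $D_{(\underline{X},\underline{A})}$ and then pass to the colimit. The classical James/Milnor-type splitting says that for pointed CW complexes $W_1,\dots,W_m$ there is a natural pointed homotopy equivalence
$$\Sigma(W_1\times\cdots\times W_m)\;\simeq\;\Sigma\mvee_{\emptyset\neq J\subseteq[m]}\mwedge_{j\in J}W_j,$$
natural in pointed maps $W_i\to W_i'$. The key feature I would exploit is that this equivalence is compatible with inclusions of pairs $A_i\hookrightarrow X_i$: each $J$-wedge summand transforms functorially, so the whole decomposition upgrades to a natural equivalence of diagrams indexed by $K$.

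First, I would apply the splitting termwise. For each $\sigma\in K$ the diagram value $D_{(\underline{X},\underline{A})}(\sigma)=\lprod_{i=1}^{m}W_{i}^{\sigma}$ (with $W_i^\sigma=X_i$ if $i\in\sigma$, else $A_i$) splits as
$$\Sigma D_{(\underline{X},\underline{A})}(\sigma)\;\simeq\;\mvee_{\emptyset\neq J\subseteq[m]}\Sigma\mwedge_{j\in J}W_{j}^{\sigma}.$$
Observe that the $J$-th smash factor depends only on the indices in $J$, and $\mwedge_{j\in J}W_{j}^{\sigma}=\widehat{D}_{(\underline{X},\underline{A})_{J}}(\sigma\cap J)$, the value at $\sigma\cap J$ of the diagram defining the polyhedral smash product for the subfamily $(\underline{X},\underline{A})_J$.

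Second, I would assemble the splittings over the diagram. Because the morphisms $d_{\sigma,\tau}$ are CW-inclusions (hence cofibrations) and the pairs $(X_i,A_i)$ are pointed CW-pairs, $\Sigma$ commutes with the colimit and the colimit agrees with the homotopy colimit. Naturality of the termwise splitting yields
$$\Sigma Z\big(K;(\underline{X},\underline{A})\big)\;\simeq\;\mvee_{\emptyset\neq J\subseteq[m]}\Sigma\;\underset{\sigma\in K}{\mathrm{colim}}\;\widehat{D}_{(\underline{X},\underline{A})_{J}}(\sigma\cap J).$$
Since the assignment $\sigma\mapsto\sigma\cap J$ carries $K$ onto $K_J$ cofinally in the diagram defining $\widehat{Z}\big(K_J;(\underline{X},\underline{A})_J\big)$, the inner colimit is canonically identified with $\widehat{Z}\big(K_J;(\underline{X},\underline{A})_J\big)$, giving the stated equivalence $H$ (and the $J=\emptyset$ summand, if one insists on including it, contributes only the basepoint).

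The main obstacle is to verify that the James/Milnor splitting of a single product can be upgraded to a splitting of the entire diagram $D_{(\underline{X},\underline{A})}$ that is compatible with the colimit on the nose up to homotopy; concretely, one needs to check that the homotopies implementing naturality in $W_i\to W_i'$ are well-behaved on the inclusions $A_i\hookrightarrow X_i$ so that the inner colimits really are the desired polyhedral smash products. Once this cofibrancy/naturality bookkeeping is in place, the identification of each inner colimit with $\widehat{Z}(K_J;(\underline{X},\underline{A})_J)$ is combinatorial and the wedge decomposition follows.
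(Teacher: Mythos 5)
This theorem is quoted from \cite[Theorem 2.10]{bbcg2} rather than proved in the present paper, and your plan is essentially the argument given there: suspend, split each $D_{(\underline{X},\underline{A})}(\sigma)$ by the natural James--Milnor decomposition of a product, and pass to the colimit, identifying the colimit of the $J$-th summand diagram $\sigma\mapsto \widehat{D}_{(\underline{X},\underline{A})_J}(\sigma\cap J)$ with $\widehat{Z}\big(K_J;(\underline{X},\underline{A})_J\big)$ by the cofinality of $\sigma\mapsto\sigma\cap J$. The bookkeeping you flag at the end is closed by noting that $H$ is assembled from the strictly natural, unsuspended projection--quotient maps $\prod_{i\in[m]}W_i\to\prod_{j\in J}W_j\to\bigwedge_{j\in J}W_j$ together with a fixed pinch map on the suspension coordinate, so it is a strict map of diagrams of cofibrations (for which colimit agrees with homotopy colimit) that is an objectwise equivalence by the classical splitting, hence an equivalence on colimits.
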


In many of the most important cases, the spaces $\widehat{Z}\big(K_J; (\underline{X}, \underline{A})_J\big)\big)$
can be identified explicitly, \cite{bbcg2}. Moreover, it is shown by the authors in \cite{bbcg3} that for based
CW pairs $(\underline{X},\underline{A})$, the product 
structure on the cohomology of the polyhedral product has
a canonical formulation in terms of partial diagonal maps on these spaces, (reviewed in Section \ref{subsec:back}).

Aside from the various unstable and stable splitting theorems, \cite{bbcg1,gt,gtshifted,ikshifted,ik5},
there is an extensive history of  computations of the cohomology 
groups and rings of various families of polyhedral products,  
\cite[Sections 5, 8 and 11]{bbcgsurvey}, see also \cite{ldm1,franz,bbp,zheng,zheng2,bbcg10,cai1,caichoi}. 

Some very early calculations of the cohomology of certain moment-angle complexes, (the case 
$(X_i,A_i) = (D^2,S^1)$ for all $i = 1,2,\ldots,m$), appeared in the work of Santiago L\'opez de Medrano 
\cite{ldm1}, though at that time the spaces he studied were
not recognized to have the structure of a moment-angle complex. 
The cohomology algebras of all moment-angle complexes was computed first by M.~Franz \cite{franz} and by
 I.~Baskakov, V.~Buchstaber and T.~Panov in \cite{bbp}.

The cohomology of the polyhedral product  $Z\big(K;  (\underline{X}, \underline{A}) \big)$,  for 
$(\underline{X},\underline{A})$, satisfying certain freeness conditions, (coefficients in a field $k$ for example),  
was computed using 
a spectral sequence  by the authors in \cite{bbcg10}. A computation using different methods by Q.~Zheng can 
be found in \cite{zheng, zheng2}. 

The description herein of the product structure of 
$\widetilde{H}^{*}\big(\widehat{Z}(K;(\underline{X},\underline{A}))\big)$ is the most explicit of which 
we know. 

As announced in \cite[Section 12]{bbcgsurvey},  one goal of the current paper is to show that for certain pairs 
$ (\underline{U}, \underline{V})$, called {\em wedge decomposable\/}, the algebraic decomposition given by 
the spectral sequence calculation \cite[Theorem $5.4$]{bbcg10}  is a consequence of an underlying geometric splitting. 
{\em Moreover, the results of this observation extend to general based CW-pairs of finite type.}

This paper is partly a revised version of the authors' unpublished preprint from 2014, which in turn originated from an 
earlier preprint from 2010.   In addition, the results have been extended now to describe the product structure of the 
cohomology. 

We begin in Section \ref{sec:wdpairs} by defining {\em wedge decomposable pairs\/} $(\underline{U},\underline{V})$
and deriving for them
an explicit decomposition of the polyhedral product into a wedge of much simpler spaces, (Theorem \ref{thm:main}
and Corollary \ref{cor:wedge}). In particular, this allows us to identify explicit additive generators for 
$H^{\ast}\big( Z\big(K;  (\underline{U}, \underline{V})\big)$.  The proof in Section
\ref{sec:thmmain} is an induction based on a filtration of the polyhedral product which is introduced in Section 
\ref{sec:filtration}. 

A notion of {\em cohomological wedge decomposability\/} is introduced in Section \ref{sec:cohomwdl}
and is used in Sections \ref{sec:wdcofibl} to \ref{sec:general} to extend the results described above for 
$H^{\ast}\big( Z\big(K;  (\underline{U}, \underline{V})\big)$, 
to the cohomology of arbitrary polyhedral products $Z\big(K;(\underline{X},\underline{A})\big)$. Given a collection
$ (\underline{X},\underline{A})$, we associate to it wedge decomposable pairs $(\underline{U}, \underline{V})$ 
such that there is an isomorphism of groups
$$\theta_{(\underline{U},\underline{V})} \colon 
\widetilde{H}^{*}\big(\widehat{Z}(K;(\underline{U},\underline{V}))\big) \longrightarrow
\widetilde{H}^{*}\big(\widehat{Z}(K;(\underline{X},\underline{A}))\big)$$

{\em Effectively, this result (Theorem  \ref{thm:cartan2})  gives an explicit method for labelling additive generators of 
the group
$H^{\ast}\big( Z\big(K;  (\underline{X}, \underline{A}));k\big)$, in terms of an appropriate choice of generators for 
$H^{\ast}(X)$ and $H^{\ast}(A)$, and the link structure of the simplicial complex $K$\/}. 
\skp{0.1}
\nd Applications of the additive results  comprise Sections \ref{sec:hpseries} and \ref{sec:applications}.

 A discussion of the ring structure  of $H^{\ast}\big(Z(K;(\underline{X},\underline{A}))\big)$ occupies 
Section \ref{sec:products}. The main result is Theorem \ref{thm:mainprodthm} which,  for two classes
$\theta_{(\underline{U},\underline{V})}(u)$ and $\theta_{(\underline{U},\underline{V})}(v)$ 
in  $\widetilde{H}^{*}\big(\widehat{Z}(K;(\underline{X},\underline{A}))\big)$, allows for
the explicit determination of the product 
$$\theta_{(\underline{U},\underline{V})}(u)\hspace{0.5mm}\cdot\hspace{0.5mm}\theta_{(\underline{U},\underline{V})}(v) 
\;\in \;\widetilde{H}^{*}\big(\widehat{Z}(K;(\underline{X},\underline{A}))\big).$$ 
\nd (Of course, in general, this will not be the class $\theta_{(\underline{U},\underline{V})}(u \cdot v)$.) 

The discussion of ring structure begins in subsection \ref{subsec:back} with a review from \cite{bbcg3} of the 
relationship between the stable splitting of the polyhedral product and the cohomology cup product.
The Cartan decomposition of Theorem \ref{thm:main}  enables access to the wedge lemma which decomposes the 
polyhedral smash product for a wedge decomposable pair, into spaces which are indexed by links. We observe  in 
subsections \ref{subsec:linkprods} and \ref{sec:wdpairsprod} the way in which this leads to a concise description
of cohomology product in the case of wedge decomposable pairs, Theorem \ref{thm:product for wdpair} and
Corollary \ref{cor:wdecompprod}.

Included also in subsection \ref{subsec:linkprods}, is a new method for computing the $\ast$-product of the links
which  index summands in the wedge lemma decomposition of the polyhedral smash product. This uses the 
polyhedral product $Z\big(K;(\mz,S^0)\big)$ where $\mz \simeq S^0$  is defined by \eqref{eqn:s0}. 

Finally, in subsection \ref{subsec:prodxa}, we describe the way in which the results for wedge decomposable pairs
can be extended to arbitrary pairs $(\underline{X}, \underline{A})$. The fact that all pairs are cohomologically
wedge decomposable, suffices to do product calculations, the main preoccupation being the tracking of changes
to the indexing links generated by a mixing of terms arising from  cup products in $\widetilde{H}^{\ast}(X_i)$
and  $\widetilde{H}^{\ast}(A_i)$.

\section{\hspace{2mm}The polyhedral product of wedge decomposable pairs}\label{sec:wdpairs}
We begin with a definition.

\begin{defin}\label{def:wedgedecomp}
The special family of CW pairs $(\underline{U}, \underline{V}) = (\underline{B\vee C}, \underline{B\vee E})$
satisfying   $(U_{i},V_{i }) = (B_{i}\vee C_{i}, B_{i}\vee E_{i})$ for all $i$, where $E_{i}\hookrightarrow C_{i} $ is a
null homotopic inclusion, is called {\em wedge decomposable\/}.
\end{defin} 
The fact that the smash product distributes over wedges of spaces, leads to the characterization of the smash polyhedral 
product in a way which resembles a {\em Cartan formula\/}.

\begin{thm}\label{thm:main} (Cartan Formula)
Let $(\underline{U}, \underline{V}) = (\underline{B\vee C}, \underline{B\vee E})$ be a wedge decomposable
pair, then there is a homotopy equivalence
$$\widehat{Z}\big(K;(\underline{U}, \underline{V})\big) \longrightarrow 
\mvee_{I\leq [m]}\Big(\widehat{Z}\big(K_{I};(\underline{C},\underline{E})_I\big) \wedge
\widehat{Z}\big(K_{[m]- I};(\underline{B},\underline{B})_{[m]-I}\big)\Big)$$

\nd which is natural with respect to maps of decomposable pairs.  Of course, 
$$\widehat{Z}\big(K_{[m]- I};(\underline{B},\underline{B})_{[m]-I}\big) \;=\;  \mwedge_{j\; \in\; [m]-I}B_j$$
with the convention that 
$$\widehat{Z}\big(K_{\varnothing};(\underline{B},\underline{B})_{\varnothing}\big),\;
\widehat{Z}\big(K_{\varnothing};(\underline{C},\underline{E})_{\varnothing}\big)
 \;\; {\rm{and}}\;\; \widehat{Z}\big(K_{I};(\varnothing,\varnothing)_{I}\big)
 \;=\; S^0.$$
\end{thm}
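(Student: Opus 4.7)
The plan is to exploit the distributivity of the smash product over wedge sums. For each simplex $\sigma \in K$, each factor of $\widehat{D}_{(\underline U,\underline V)}(\sigma) = \bigwedge_i W_i$ is itself a wedge of two spaces: $W_i = U_i = B_i \vee C_i$ when $i \in \sigma$ and $W_i = V_i = B_i \vee E_i$ when $i \notin \sigma$. Applying distributivity to all $m$ factors simultaneously expresses $\widehat{D}_{(\underline U,\underline V)}(\sigma)$ as a wedge indexed by subsets $I \subseteq [m]$, where $I$ records the positions at which one selects the non-$B$ summand:
$$\widehat{D}_{(\underline U,\underline V)}(\sigma) \;\cong\; \bigvee_{I \subseteq [m]} \Big(\bigwedge_{i \in I \cap \sigma} C_i \;\wedge\; \bigwedge_{i \in I \setminus \sigma} E_i \;\wedge\; \bigwedge_{j \notin I} B_j\Big).$$
The first two smash factors combine into $\widehat{D}_{(\underline C,\underline E)_I}(\sigma \cap I)$, so each $I$-summand has the shape $\widehat{D}_{(\underline C,\underline E)_I}(\sigma \cap I) \wedge \bigwedge_{j \notin I} B_j$, isolating the $\sigma$-dependence into a single factor.

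Next I would verify that this wedge splitting is natural in $\sigma$, i.e., is respected by the diagram structure maps $\widehat{D}(\sigma) \hookrightarrow \widehat{D}(\tau)$ for $\sigma \subseteq \tau$. Each such inclusion is a smash of coordinate maps, each of which is either the identity or the inclusion $V_i \hookrightarrow U_i$. By the wedge decomposable hypothesis the latter is literally the wedge of $\mathrm{id}_{B_i}$ and the (null-homotopic) inclusion $\iota_i : E_i \to C_i$, and hence strictly preserves the wedge splitting on each coordinate. Consequently each $I$-indexed summand of $\widehat{D}(\sigma)$ is sent into the corresponding $I$-summand of $\widehat{D}(\tau)$, and the whole diagram $\widehat{D}_{(\underline U, \underline V)} : K \to CW_\ast$ decomposes as a wedge of sub-diagrams $\widehat{D}_I$ with $\widehat{D}_I(\sigma) = \widehat{D}_{(\underline C,\underline E)_I}(\sigma \cap I) \wedge \bigwedge_{j \notin I} B_j$.

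Since wedge sums are coproducts in pointed spaces and therefore commute with arbitrary colimits, and since the factor $\bigwedge_{j \notin I} B_j$ is constant in $\sigma$, taking the colimit over $K$ yields
$$\widehat{Z}(K;(\underline U,\underline V)) \;\simeq\; \bigvee_{I \subseteq [m]}\Big(\mathrm{colim}_{\sigma \in K}\widehat{D}_{(\underline C,\underline E)_I}(\sigma \cap I)\Big) \wedge \bigwedge_{j \notin I} B_j.$$
The assignment $\sigma \mapsto \sigma \cap I$ is functorial and surjects onto the simplices of the full subcomplex $K_I$, so the inner colimit is precisely $\widehat{Z}(K_I;(\underline C,\underline E)_I)$; the stated conventions cover the edge cases $I = \varnothing$ and $I = [m]$, and naturality in maps of wedge decomposable pairs is inherited from the strict naturality of the distributive expansion. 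The main subtlety is ensuring the distributivity of the first step is a strict, naturally commuting identification (not merely a homotopical one), since once that is in hand the commutation of colimit with wedge, and the final identification of the colimit with $\widehat{Z}(K_I;(\underline C,\underline E)_I)$, proceed formally.
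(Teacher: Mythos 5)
Your argument is correct, but it is organized quite differently from the paper's. The paper proves Theorem \ref{thm:main} by induction on the shortlex filtration $F_t\widehat{Z}(K;(\underline{U},\underline{V}))$ of Section \ref{sec:filtration}: it verifies the base case $F_0$, expands $\widehat{D}_{\underline{U},\underline{V}}(\sigma_t)$ by distributivity of smash over wedge, shuffles the $2^m$ resulting summands into the $I$-indexed form \eqref{eqn:induction}, and then checks by hand that gluing $\widehat{D}_{\underline{U},\underline{V}}(\sigma_t)$ onto $F_{t-1}$ adds exactly $\widehat{D}^I_{\underline{C},\underline{E}}(\sigma_t)$ to the $I$-th wedge summand for each $I$ containing $\sigma_t$. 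You instead perform the distributive expansion once and for all at the level of the diagram $\widehat{D}_{(\underline{U},\underline{V})}\colon K\to CW_\ast$, observe that the structure maps are wedges of $\mathrm{id}_{B_i}$ and $\iota_i\colon E_i\to C_i$ and hence split the diagram as a coproduct of subdiagrams indexed by $I\subseteq[m]$, and then invoke the commutation of colimits with coproducts together with finality of $\sigma\mapsto\sigma\cap I$ to identify the $I$-th colimit with $\widehat{Z}(K_I;(\underline{C},\underline{E})_I)\wedge\mwedge_{j\notin I}B_j$. Your route is more conceptual and yields a homeomorphism rather than merely a homotopy equivalence, and it makes the naturality assertion essentially automatic; the paper's induction is more hands-on but has the side benefit of producing the filtered statement \eqref{eqn:ftuv}, i.e.\ the compatibility of the splitting with $F_t$, which the paper reuses later (e.g.\ in Sections \ref{sec:wdcofibl}--\ref{sec:general}). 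Note also that, like the paper, you never actually need the null-homotopy of $E_i\hookrightarrow C_i$ for this theorem; it only enters in Corollary \ref{cor:wedge} via the Wedge Lemma.
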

\skp{0.3}
We can decompose $\widehat{Z}\big(K;(\underline{U}, \underline{V})\big)$ further by applying
(a generalization of) the Wedge Lemma. We recall first the definition of a link.
\begin{defin}\label{defn:link}
For $\sigma$  a simplex in a simplicial complex $\mathcal{K}$, $\text{lk}_{\sigma}(\mathcal{K})$  
{\em the link of\/} $\sigma$ {\em in\/} $\mathcal{K}$, is defined to be the simplicial complex  for which
$$ \tau \in \text{lk}_{\sigma}(K)\quad \text{if and only if}\quad \tau \cup \sigma \in \mathcal{K}.$$
\end{defin} 
\begin{thm}\label{thm:wlemma} \cite[Theorem $2.12$]{bbcg1}, \cite[Lemma 1.8]{zz}
Let $\mathcal{K}$ be a simplicial complex on $[m]$ and $(\underline{C}, \underline{E})$ a family of CW pairs
satisfying $E_i \hookrightarrow C_i$ is null homotopic for all $i$ then
$$\widehat{Z}(\mathcal{K}; (\underline{C}, \underline{E})) \simeq \mvee_{\sigma \in \mathcal{K}}
|\Delta(\overline{\mathcal{K}})_{<\sigma}|\ast
\widehat{D}_{\underline{C},\underline{E}}^{[m]}(\sigma)$$
\nd where $|\Delta(\overline{\mathcal{K}})_{<\sigma}| \cong |\text{lk}_{\sigma}(\mathcal{K})|$, the 
realization of the link of $\sigma$ in the
simplicial complex $\mathcal{K}$  and  
\begin{equation}\label{eqn:ced.sigma}
\widehat{D}_{\underline{C},\underline{E}}^{[m]}(\sigma)  =\mwedge^{m}_{j=1}W_{i_{j}},\quad {\rm with}\quad
W_{i_{j}}=\left\{\begin{array}{lcl}
C_{i_{j}}  &{\rm if} & i_{j}\in \sigma\\
E_{i_{j}}  &{\rm if} & i_{j}\in [m]-\sigma.
\end{array}\right.
\end{equation} \hfill $\square$
\end{thm}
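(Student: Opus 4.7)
The plan is to realize $\widehat{Z}(\mathcal{K}; (\underline{C}, \underline{E}))$ as a homotopy colimit and then invoke the Projection (Wedge) Lemma of Ziegler--\v{Z}ivaljevi\'c cited as \cite{zz}. By definition the polyhedral smash product is the strict colimit $\mathrm{colim}_{\sigma\in\mathcal{K}} \widehat{D}_{(\underline{C},\underline{E})}(\sigma)$ over the face poset of $\mathcal{K}$; because each $(C_i,E_i)$ is a based CW pair, the structure maps $\widehat{D}(\sigma) \hookrightarrow \widehat{D}(\tau)$ are cofibrations, so this strict colimit agrees with the corresponding homotopy colimit.

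The key geometric input is that every non-identity map in this diagram is null-homotopic. For $\sigma \subsetneq \tau$, the map $\widehat{D}(\sigma) \to \widehat{D}(\tau)$ is a smash of $m$ coordinate maps: on each factor $i \in \tau \setminus \sigma$ it is the inclusion $E_i \hookrightarrow C_i$, while on the remaining factors it is the identity. Since $\tau \setminus \sigma$ is non-empty, at least one factor is null-homotopic by hypothesis, and smashing with a null-homotopic map yields a null-homotopic map; hence the whole structure map is null-homotopic.

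With this hypothesis verified, the Projection Lemma produces a homotopy equivalence of the form
$$\mathrm{hocolim}_{P} D \;\simeq\; \mvee_{p\in P} |P_{<p}| \ast D(p)$$
for any diagram $D$ of well-pointed spaces over a finite poset $P$ whose non-identity morphisms are null-homotopic, where $|P_{<p}|$ denotes the realization of the order complex of the elements of $P$ strictly below $p$. Specializing this to our diagram, indexed by $\mathcal{K}$ with the opposite ordering (so that ``below $\sigma$'' corresponds to ``properly contains $\sigma$ in $\mathcal{K}$''), yields the claimed wedge decomposition. The remaining step is the classical identification of $|\Delta(\overline{\mathcal{K}})_{<\sigma}|$ with $|\text{lk}_{\sigma}(\mathcal{K})|$: the order complex of the simplices of $\mathcal{K}$ strictly containing $\sigma$ is the barycentric subdivision of $|\text{lk}_{\sigma}(\mathcal{K})|$, hence PL-homeomorphic to it.

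The main obstacle is less the substantive homotopy theory than the bookkeeping of poset conventions---ensuring that ``$<\sigma$'' in the Projection Lemma corresponds to ``$\supsetneq\sigma$'' in $\mathcal{K}$---together with the boundary cases: the top-dimensional simplices $\sigma$, where $|P_{<\sigma}|$ is empty and one must interpret the resulting join as $\widehat{D}(\sigma)$ itself, in agreement with the conventions of Theorem~\ref{thm:main}, and the case $\sigma = \varnothing$. Once these are fixed, the substantive content is carried entirely by the Projection Lemma and the null-homotopy observation above.
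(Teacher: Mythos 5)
Your proposal is correct and is essentially the argument behind the cited sources: the paper itself states this theorem as a quotation from \cite{bbcg1} and \cite{zz} with no proof, and the standard proof is exactly what you describe --- present $\widehat{Z}(\mathcal{K};(\underline{C},\underline{E}))$ as a homotopy colimit over the (reversed) face poset, observe that every non-identity structure map smashes against some null-homotopic $E_i\hookrightarrow C_i$ and is therefore null-homotopic, and invoke the Ziegler--\v{Z}ivaljevi\'c Wedge Lemma, identifying the order complex of the faces strictly containing $\sigma$ with the barycentric subdivision of $|\mathrm{lk}_{\sigma}(\mathcal{K})|$. Your handling of the poset-direction convention and of the degenerate cases (maximal $\sigma$ and $\sigma=\varnothing$) matches the conventions used elsewhere in the paper.
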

Applying this to the decomposition of Theorem \ref{thm:main}, we get

\begin{cor}\label{cor:wedge} There is a homotopy equivalence
$$\widehat{Z}\big(K;(\underline{U}, \underline{V})\big) \longrightarrow 
\mvee_{I\leq [m]}\Big(\big(\mvee_{\sigma \in K_{I}} |lk_{\sigma}(K_{I})|\ast
\widehat{D}_{\underline{C},\underline{E}}^{I}(\sigma)\big) \wedge
\widehat{Z}\big(K_{[m]- I};(\underline{B},\underline{B})_{[m]-I}\big)\Big).$$

\nd where $\widehat{D}_{\underline{C},\underline{E}}^{I}(\sigma)$ is as in \eqref{eqn:ced.sigma} with
$I$ replacing $[m]$.
\end{cor}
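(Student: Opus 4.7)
The plan is to obtain the stated homotopy equivalence by composing the Cartan decomposition of Theorem \ref{thm:main} with a factor-by-factor application of the Wedge Lemma (Theorem \ref{thm:wlemma}), then invoking the fact that the smash product distributes over wedges to merge the resulting double-indexed wedge into a single one.

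First, I would start from Theorem \ref{thm:main}, which already provides
$$\widehat{Z}\big(K;(\underline{U}, \underline{V})\big) \simeq
\mvee_{I\leq [m]}\Big(\widehat{Z}\big(K_{I};(\underline{C},\underline{E})_I\big) \wedge
\widehat{Z}\big(K_{[m]- I};(\underline{B},\underline{B})_{[m]-I}\big)\Big).$$
The hypothesis that $(\underline{U},\underline{V})$ is wedge decomposable supplies, for every $i$, a null homotopic inclusion $E_i \hookrightarrow C_i$. This null-homotopy condition is preserved under restriction to any subset $I \subseteq [m]$, so the pair $(\underline{C},\underline{E})_I$ meets exactly the hypothesis required by Theorem \ref{thm:wlemma} for the subcomplex $K_I$ on vertex set $I$.

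Next, I would apply Theorem \ref{thm:wlemma} to each of the factors $\widehat{Z}\big(K_I;(\underline{C},\underline{E})_I\big)$ indexed by $I \subseteq [m]$, obtaining
$$\widehat{Z}\big(K_{I};(\underline{C},\underline{E})_I\big) \;\simeq\;
\mvee_{\sigma \in K_I} |lk_\sigma(K_I)| \ast \widehat{D}_{\underline{C},\underline{E}}^{I}(\sigma),$$
where $\widehat{D}_{\underline{C},\underline{E}}^{I}(\sigma)$ is the smash product defined in \eqref{eqn:ced.sigma} with $I$ in place of $[m]$. Smashing this inner wedge with the fixed factor $\widehat{Z}\big(K_{[m]-I};(\underline{B},\underline{B})_{[m]-I}\big) = \mwedge_{j \in [m]-I} B_j$ and using distributivity of smash over wedge for well-pointed spaces yields
$$\widehat{Z}\big(K_{I};(\underline{C},\underline{E})_I\big) \wedge
\widehat{Z}\big(K_{[m]-I};(\underline{B},\underline{B})_{[m]-I}\big) \;\simeq\;
\mvee_{\sigma \in K_I} \Big(|lk_\sigma(K_I)|\ast \widehat{D}_{\underline{C},\underline{E}}^{I}(\sigma)\Big) \wedge \widehat{Z}\big(K_{[m]-I};(\underline{B},\underline{B})_{[m]-I}\big).$$
Substituting this into the wedge over $I$ from Theorem \ref{thm:main} produces the stated equivalence.

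The only step requiring any care, and the place I would expect a referee to push, is the interchange of smash with the join construction $|lk_\sigma(K_I)| \ast (-)$ inside the wedge; but since $X \wedge (Y \ast Z) \simeq (X \wedge Y)\ast Z$ for based CW complexes (and in any case the wedge summand can be written as $\Sigma^{}|lk_\sigma(K_I)| \wedge \widehat{D}_{\underline{C},\underline{E}}^{I}(\sigma)$ up to homotopy, which then smashes unambiguously with the $B_j$-factor), this poses no genuine difficulty. The boundary conventions $\widehat{Z}(K_\varnothing;\cdot) = S^0$ recorded in Theorem \ref{thm:main} handle the extreme cases $I = \varnothing$ and $I = [m]$ correctly, so no additional bookkeeping is required. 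Naturality with respect to maps of wedge decomposable pairs is inherited directly from the naturality already asserted in Theorem \ref{thm:main} together with the naturality of the Wedge Lemma in the pair $(\underline{C},\underline{E})$.
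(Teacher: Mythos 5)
Your proof is correct and is exactly the argument the paper intends: the corollary is obtained by applying the Wedge Lemma (Theorem \ref{thm:wlemma}) to each factor $\widehat{Z}\big(K_I;(\underline{C},\underline{E})_I\big)$ in the Cartan decomposition of Theorem \ref{thm:main} and distributing the smash product over the resulting wedge. Your additional remarks on the restriction of the null-homotopy hypothesis to $K_I$ and on the extreme cases $I=\varnothing,\ [m]$ are accurate bookkeeping that the paper leaves implicit.
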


\nd Combined with Theorem \ref{thm:bbcgsplitting}, this gives a complete
description of the topological spaces $Z\big(K;(\underline{U}, \underline{V})\big)$ for wedge decomposable pairs
$(\underline{U}, \underline{V})\big)$. 
\skp{0.1}
The case $E_i \simeq \ast$ simplifies further by \cite[Theorem $2.15$]{bbcg2} to give the next corollary.
\begin{cor}\label{cor:E.a.point}
For wedge decomposable pairs of the form $(\underline{B\vee C}, \underline{B})$, corresponding to $E_i \simeq \ast$
for all $i = 1,2,\ldots,m$, there are homotopy equivalences
$$\widehat{Z}\big(K_{I};(\underline{C},\underline{E})_I\big) \simeq \widehat{Z}\big(K_{I};(\underline{C},\ast)_I\big)
\simeq  \widehat{C}^I,$$
and so Theorem \ref{thm:main} gives $\widehat{Z}\big(K;(\underline{B\vee C}, \underline{B})\big) \simeq
\mvee_{I\leq [m]}\big(\widehat{C}^I\wedge \widehat{B}^{([m]-I)}\big)$. \hfill $\square$
\end{cor}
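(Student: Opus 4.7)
The plan is to deduce this corollary directly from Theorem \ref{thm:main} by simplifying each factor in the wedge decomposition under the hypothesis that each $E_i$ is contractible.

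First I would establish the chain of equivalences $\widehat{Z}(K_I; (\underline{C}, \underline{E})_I) \simeq \widehat{Z}(K_I; (\underline{C}, \ast)_I) \simeq \widehat{C}^I$. The first equivalence follows from the homotopy functoriality of the polyhedral smash product in its pair arguments: collapsing each contractible $E_i$ to the basepoint yields a homotopy equivalence of CW pairs $(C_i, E_i) \simeq (C_i, \ast)$, and this equivalence assembles across the colimit diagram $\widehat{D}_{(\underline{C}, \underline{E})}$ to induce a homotopy equivalence on the polyhedral smash products. The second equivalence is precisely the content of \cite[Theorem 2.15]{bbcg2}; heuristically, when all subspaces are basepoints, every $\widehat{D}(\sigma)$ for $\sigma \subsetneq I$ contains at least one basepoint smash factor and hence reduces to a point, so only the term $\sigma = I$ survives (when $I \in K$), contributing $\bigwedge_{i \in I} C_i = \widehat{C}^I$. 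For $I$ not a simplex of $K$, both sides are a point, consistent with the convention $\widehat{C}^I = \ast$ in that case.

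Next I would identify the right-hand factor of each wedge summand in Theorem \ref{thm:main}. Because the pair $(B_j, B_j)$ is constant, the diagram defining $\widehat{Z}(K_{[m] - I}; (\underline{B}, \underline{B})_{[m] - I})$ has $\widehat{D}(\sigma) = \bigwedge_{j \in [m] - I} B_j$ for every $\sigma$, so the colimit collapses to $\widehat{B}^{([m] - I)}$, as already noted in the statement of Theorem \ref{thm:main}.

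Combining the two identifications, substitution into Theorem \ref{thm:main} immediately yields the asserted homotopy equivalence
$$\widehat{Z}\bigl(K; (\underline{B \vee C}, \underline{B})\bigr) \;\simeq\; \mvee_{I \leq [m]} \bigl(\widehat{C}^I \wedge \widehat{B}^{([m] - I)}\bigr).$$
There is no substantive obstacle here, since the real work was done in Theorem \ref{thm:main} and in \cite[Theorem 2.15]{bbcg2}; the only care needed is in the boundary conventions ($I = \varnothing$, $I = [m]$, or $I \notin K$), but these are already specified in the statement of Theorem \ref{thm:main}.
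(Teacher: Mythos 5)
Your proposal is correct and follows essentially the same route as the paper: the paper likewise obtains the equivalences $\widehat{Z}(K_I;(\underline{C},\underline{E})_I)\simeq\widehat{Z}(K_I;(\underline{C},\ast)_I)\simeq\widehat{C}^I$ by appeal to \cite[Theorem 2.15]{bbcg2} and then substitutes into Theorem \ref{thm:main}. Your extra care with the first equivalence (assembling the pairwise equivalences over the colimit diagram, as in the Homotopy Lemma) and with the degenerate cases $I\notin K$ is exactly the implicit content the paper leaves to the reader.
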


\nd Notice here that the Poincar\'e series for the space $\widehat{Z}\big(K;(\underline{B\vee C}, \underline{B})\big)$
follows easily from Corollary \ref{cor:E.a.point}. 

\begin{rem} In comparing these observations with  \cite[Theorem $5.4$]{bbcg10}, notice that the links appear in the terms
$\widehat{Z}\big(K_{I};(\underline{C},\underline{E})_I\big)$. Also, while  Theorem \ref{thm:main} and Corollary \ref{cor:wedge} 
give a geometric underpinning for the cohomology calculation in \cite[Theorem $5.4$]{bbcg10} for wedge decomposable pairs, 
the geometric splitting does not require that $E,B$ or $C$ have torsion-free cohomology
\end{rem}

\section{\hspace{2mm}A filtration}\label{sec:filtration}
We begin by reviewing the filtration on polyhedral products used for the spectral sequence calculation in \cite{bbcg10}.
Following \cite[Section 2]{bbcg10}, where more details can be found.  The  length-lexicographical 
ordering, ({\em shortlex}), on the faces  of the $(m-1)$-simplex $\Delta[m-1]$ is induced by an ordering on the vertices.
This is the left lexicographical ordering on strings of varying lengths with shorter strings taking
precedence.
The ordering gives a filtration on $\Delta[m-1]$ by $$F_{t}(\Delta[m-1]) = \mcup_{s\leq t}\sigma_s.$$

\nd In turn, this gives a total ordering on the simplices of a simplicial $K$ on $m$ vertices
\begin{equation}\label{eqn:simplexordering}
\sigma_{0} = \varnothing< \sigma_{1}<  \sigma_{2}<\ldots <\sigma_{t}<\ldots< \sigma_{s}
\end{equation}
\nd via the natural inclusion $$K\; \subset \; \Delta[m-1].$$
This is filtration preserving in the sense that $F_{t}K = K \cap F_{t}\Delta[m-1]$. 
\begin{exm}
Consider $[m] = [3]$ and 
$$K  = \big\{\phi, \{v_1\},\{v_2\},\{v_3\}, \big\{\{v_1\},\{v_3\}\big\}, \big\{\{v_2\},\{v_3\}\big\}\big\}$$ 
with the realization consisting of two edges with a common vertex.
Here the  length-lexicographical ordering on the two-simplex $\Delta[2]$ is 
$$\phi <  v_1<v_2<v_3<v_1v_2<v_1v_3<v_2v_3<v_1v_2v_3$$
and so the induced ordering on $K$ is
$$\phi <  v_1<v_2<v_3<v_1v_3<v_2v_3\/.$$
\end{exm}
\begin{rem}
 Notice that if $t<m$, then $F_{t}K$ will contain  {\em ghost\/} vertices, that is, vertices which are in $[m]$ but are not 
considered simplices, They do however label Cartesian product factors in the polyhedral product. 
\end{rem}
As described in \cite[Section 2]{bbcg10},
this induces a natural filtration on the polyhedral product $Z\big(K;(\underline{X}, \underline{A})\big)$ and the smash 
polyhedral product  $\widehat{Z}\big(K;(\underline{X}, \underline{A})\big)$ as follows:
$$F_{t}{Z}(K;(\underline{X},\underline{A})) \;=\; \mcup_{k \leq t}D_{(\underline{X}, \underline{A})}({\sigma_k})
\quad {\rm{and}} \quad F_{t}\widehat{Z}(K;(\underline{X},\underline{A})) = \mcup_{k \leq t}
\widehat{D}_{\underline{X},\underline{A}}(\sigma_{k}).$$

\nd Notice also that the filtration satisfies
\begin{equation}\label{eqn:flitration}
F_{t}\widehat{Z}(K;(\underline{X},\underline{A})) \;=\; \widehat{Z}(F_{t}K;(\underline{X},\underline{A})).
\end{equation}
\skp{0.3}
\section{\hspace{2mm}The proof of Theorem \ref{thm:main}}\label{sec:thmmain}
Let the family of CW pairs $(\underline{U}, \underline{V})$ be wedge decomposable as in 
Definition \ref{def:wedgedecomp}.
 We begin by checking that Theorem \ref{thm:main} holds for $F_{0}\widehat{Z}(K;(\underline{U},\underline{V}))$.
In this case $F_{0}K$ consists of the empty simplex, (the boundary of a point), and $m-1$ ghost vertices.
So, 
\begin{align}\label{eqn:F0}
\widehat{Z}(F_{0}K;(\underline{U},\underline{V})) &\;=\; V_1 \wedge V_2\wedge  \cdots \wedge V_m\\
&\;=\; (B_1 \vee E_1) \wedge (B_2 \vee E_2) \wedge \cdots \wedge (B_m \vee E_m). \nonumber
\end{align}
Next, fix $I = (i_1,i_2,\ldots, i_k) \subset [m]$ and set $[m]-I = (j_i,j_2,\ldots j_{m-k})$. Then
$$\widehat{Z}\big(F_{0}K_{I};(\underline{C},\underline{E})_I\big)\wedge 
\widehat{Z}\big(K_{[m]- I};(\underline{B},\underline{B})_{[m]-I}\big)= (E_{i_1}\wedge E_{i_2}\wedge \cdots E_{i_k})\wedge
(B_{j_1}\wedge B_{j_2}\wedge \cdots B_{j_{m-k}}).$$
    \nd is the $I$-th wedge term in the expansion of the right hand side of \eqref{eqn:F0}. This confirms Theorem \ref{thm:main}
for $t=0$. 
\skp{0.3}
We suppose next the induction hypothesis that 
$$F_{t-1}\widehat{Z}(K;(\underline{U},\underline{V})) \;\simeq\;
\mvee_{I\leq [m]}\widehat{Z}\big(F_{t-1}K_{I};(\underline{C},\underline{E})_I\big)\wedge 
\widehat{Z}\big(K_{[m]- I};(\underline{B},\underline{B})_{[m]-I}\big),$$
\nd with a view to verifying it for $F_{t}$.  The definition of the filtration gives
{\fontsize{11}{12}\selectfont
\begin{align}\label{eqn:union}
F_{t}\widehat{Z}(K;(\underline{U},\underline{V})) \;&=\; 
\widehat{D}_{\underline{U},\underline{V}}(\sigma_{t}) \;\cup\; 
F_{t-1}\widehat{Z}(K;(\underline{U},\underline{V}))  \\
\nonumber &\simeq\;
\widehat{D}_{\underline{U},\underline{V}}(\sigma_{t}) \;\cup\; 
\mvee_{I\leq [m]}\widehat{Z}\big(F_{t-1}K_{I};(\underline{C},\underline{E})_I\big)\wedge 
\widehat{Z}\big(K_{[m]- I};(\underline{B},\underline{B})_{[m]-I}\big).
\end{align}}
\nd The space \;$\ds{\widehat{D}_{\underline{U},\underline{V}}(\sigma_{t})}$\; is the smash product
\begin{equation}\label{eqn:dsigma}
\mwedge^{m}_{j=1}B_{j}\vee Y_{j},\quad {\rm with}\quad
Y_{j}=\left\{\begin{array}{lcl}
C_{j}  &{\rm if} & j\in \sigma_{t}\\
E_{j}  &{\rm if} & j\notin \sigma_{t}.
\end{array}\right.
\end{equation}
\vspace{0.5mm}

\nd After a shuffle of wedge factors, the space
 $\ds{\widehat{D}_{\underline{U},\underline{V}}(\sigma_{t})}$ becomes
\begin{multline}\label{eqn:induction}
\mvee_{I\leq [m],\; \sigma_{t}\in I}
\widehat{D}_{\underline{C},\underline{E}}^{I}(\sigma_{t}) \wedge 
\widehat{Z}\big(K_{[m]- I};(\underline{B},\underline{B})_{[m]-I}\big)\;
\; \vee \\
\mvee_{I\leq [m],\; \sigma_{t}\notin I}
\widehat{Z}\big(K_{I};(\underline{E},\underline{E})_I\big) \wedge 
\widehat{Z}\big(K_{[m]- I};(\underline{B},\underline{B})_{[m]-I}\big)
\end{multline} 
\skp{0.2}
\nd where the space $\widehat{D}_{\underline{C},\underline{E}}^{I}(\sigma_{t})$ is defined by
\eqref{eqn:ced.sigma}. 
\skp{0.6}
\begin{rem}
Notice here the relevant fact that the number of subsets $I \leq [m]$ is
the same as the number of wedge summands in the expansion of \eqref{eqn:dsigma}, namely
$2^{m}$.
\end{rem}

\nd The right-hand wedge summand in \eqref{eqn:induction} is a subset of
$$\mvee_{I\leq [m]}\widehat{Z}\big(F_{t-1}K_{I};(\underline{C},\underline{E})_I\big)\wedge 
\widehat{Z}\big(K_{[m]- I};(\underline{B},\underline{B})_{[m]-I}\big)$$ 
\nd and so,
{\fontsize{9.7}{12}\selectfont
\begin{align*}
\mvee_{I\leq [m],\; \sigma_{t}\notin I}
\hspace{-2mm}\widehat{Z}\big(K_{I};(\underline{E},\underline{E})_I\big) \wedge 
\widehat{Z}\big(K_{[m]- I};(\underline{B},\underline{B})_{[m]-I}\big)\;\; &\mcup 
\mvee_{I\leq [m]}\widehat{Z}\big(F_{t-1}K_{I};(\underline{C},\underline{E})_I\big)\wedge 
\widehat{Z}\big(K_{[m]- I};(\underline{B},\underline{B})_{[m]-I}\big)\\[1mm]
&= \mvee_{I\leq [m]}\widehat{Z}\big(F_{t-1}K_{I};(\underline{C},\underline{E})_I\big)\wedge 
\widehat{Z}\big(K_{[m]- I};(\underline{B},\underline{B})_{[m]-I}\big).
\end{align*}}
Finally, for each 
${I\leq [m]}$ with ${\sigma_{t}\in I}$,  we have   

\begin{equation}
\widehat{D}_{\underline{C},\underline{E}}^{I}(\sigma_{t}) \cup 
\widehat{Z}\big(F_{t-1}K_{I};(\underline{C},\underline{E})_I\big) \;=\; 
\widehat{Z}\big(F_{t}K_{I};(\underline{C},\underline{E})_I\big).
\end{equation}
\skp{0.05}
\nd This concludes the inductive step to give
\begin{equation}\label{eqn:ftuv}
F_{t}\widehat{Z}(K;(\underline{U},\underline{V})) \;\simeq\;
\mvee_{I\leq [m]}\widehat{Z}\big(F_{t}K_{I};(\underline{C},\underline{E})_I\big)\wedge 
\widehat{Z}\big(K_{[m]- I};(\underline{B},\underline{B})_{[m]-I}\big).
\end{equation}
\nd It is straightforward to explicitly check the steps above in the case of $F_{0}$ and
$F_{1}$. This completes the proof. \hfill $\Box$

\section{\hspace{2mm}Cohomological wedge decomposability and the general case}\label{sec:cohomwdl}

The result of the previous section can be exploited to give information about 
the groups $\widetilde{H}^{\ast}\big(\widehat{Z}(K;(\underline{X},\underline{A}))\big)$ over a field $k$ for pointed, 
finite, path connected  pairs of 
CW-complexes $(\underline{X}, \underline{A})$ of finite type, which are {\bf {\em not\/}} wedge 
decomposable.

 \begin{defin}\label{defn:strongfc}
A {\em strongly free decomposition\/} of the homology of $(X,A)$ with coefficients in a ring $k$, is a 
quadruple of $k$-modules $(E', B', C', \overline{W})$ such that the long exact sequence
\begin{equation}\label{eqn:les}
\overset{\delta}{\to} \widetilde{H}^\ast(X/A) \overset{\ell}{\to} \widetilde{H}^\ast(X) \overset{\iota}{\to} \widetilde{H}^*(A)  
\overset{\delta}{\to} \widetilde{H}^{\ast+1}(X/A) \to  \end{equation}
\skp{0.1}
\nd satisfies the condition that there exist isomorphisms 
\nd \skp{0.2}
\begin{enumerate}\itemsep2mm
\item $\widetilde{H}^{\ast}(A)\cong B'\oplus E'$
\item  $\widetilde{H}^{\ast}(X)\cong B' \oplus C'$,  \; where $B' 
\underset{\simeq}{\overset{\iota}{\to}} B', \;\;\left.\iota\right|_{C'} = 0$ 
\item  $\widetilde{H}^{\ast}(X/A)\cong C' \oplus \overline{W}$, \;
where $C' \underset{\simeq}{\overset{\ell}{\to}} C', \;\; \left.\ell\right|_{B'} = 0, \;\; 
E' \underset{\simeq}{\overset{\delta}{\to}} \overline{W}$ 
\end{enumerate}
for free graded $k$-modules  $E', B', C'$ and $\overline{W}$ of finite type. 
\skp{0.2}
A pair $(X,A)$ with a strongly free decomposition is said to be {\it strongly free\/}. A morphism of strongly 
free pairs 
$$(f_\ell,f_\iota,f_\delta)\colon (X,A) \to (U,V)$$
is a morphism  of long exact sequences 
\small{\begin{equation}\label{eqn:wd}
\begin{tikzcd} 
\cdots  \arrow[r, "\delta^{(X,A)}"] &\widetilde{H}^*(X/A) \arrow[r, "\ell^{(X,A)}"], \arrow[d, "f_\delta"] 
&\widetilde{H}^\ast(X) \arrow[r, "\iota^{(X,A)}"] , \arrow[d, "f_\ell"] 
&\widetilde{H}^\ast(A)  \arrow[r, "\delta^{(X,A)}"], \arrow[d, "f_\iota"]  
&\widetilde{H}^{\ast+1}(X/A) \arrow[r, "l^{(X,A)}"] , \arrow[d, "f_\delta"] 
&\cdots\\
\cdots  \arrow[r, "\delta^{(U,V)}"] 
&\widetilde{H}^\ast(U/V) \arrow[r, "\ell^{(U,V)}"]  
&\widetilde{H}^\ast(U) 
\arrow[r, "\iota^{(U,V)}"] 
&\widetilde{H}^\ast(V)  \arrow[r, "\delta^{(U,V)}"] 
&\widetilde{H}^{\ast+1}(U/V) \arrow[r, "l^{(U,V)}"] 
&\cdots
\end{tikzcd}
\end{equation}}

\nd  with all diagrams commuting,  which restricts to maps of the submodules $E',B',C',\overline{W}$ 
corresponding to each pair.
\end{defin}
\begin{rem}
 When $k$ is a field, the homology of $(X,A)$ is always strongly free.   When $k = \mathbb{Z}$,
strong freeness holds if all the  spaces are torsion free and of finite type.
\end{rem}
Two lemmas about null homotopic inclusions are needed next. 
\begin{lem}\label{ecprops}
Let $\iota\colon E \hookrightarrow C$ be an inclusion of based CW complexes. then
\begin{enumerate}
\item There is a map $g\colon cE \longrightarrow C$ from the reduced cone,  so that the diagram 
below commutes
\begin{equation}\label{eqn:coneone}
\hspace{0.0cm}\begin{tikzcd}[column sep = 2cm]
C \vee cE 
\arrow[r, "1_C \;\vee\; g"] &C\\ 
E \arrow[r, "="] \arrow[u, "h"'] 
&E \arrow[u, "\iota"]
\end{tikzcd}
\end{equation}
where $h(e) = [(e,0)]$, that is, $h$ includes $E$ into the base of the cone.
\item There is a homotopy equivalence of colimits
\begin{equation}
 \widehat{Z}\big(\partial\overline{\sigma}_{t};(\underline{C \vee cE},\; \underline{E})\big) \longrightarrow
\widehat{Z}\big(\partial\overline{\sigma}_{t};(\underline{C},\; \underline{E})\big).
\end{equation}
\item The vertical maps in \eqref{eqn:coneone} have homotopy equivalent cofibers.
\end{enumerate}
\end{lem}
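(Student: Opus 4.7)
I plan to prove the three parts in the order (1), (3), (2). Part (1) unpacks the universal property of the reduced cone; part (3) follows from direct cofiber computations together with the standard splitting of the cofiber of a null-homotopic map; and part (2) combines (1) with functoriality of the polyhedral smash product under homotopy equivalences of CW pairs.

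For part (1), I use that based maps $cE \to C$ correspond bijectively to based null homotopies of the restriction to the base $E \times \{0\}$ of the cone. Since $\iota$ is null homotopic, I choose a based null homotopy $H \colon E \wedge I_+ \to C$ of $\iota$; passing to the quotient $cE = E \wedge I_+/(E \wedge \{1\}_+)$ yields $g \colon cE \to C$ with $g|_{E \times \{0\}} = \iota$. The diagram \eqref{eqn:coneone} then commutes because $h(e) = [(e, 0)]$ lies in the $cE$ summand and $(1_C \vee g)(h(e)) = g(e, 0) = \iota(e)$.

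For part (3), the cofiber of $h$ is computed directly: $(C \vee cE)/h(E) = C \vee (cE/E) = C \vee \Sigma E$. For the cofiber of $\iota$, the null homotopy $H$ provides a splitting of the Puppe sequence, yielding a homotopy equivalence $C/E \simeq C \vee \Sigma E$. With the splittings chosen compatibly with $H$, the map of cofibers induced by $1_C \vee g$ realises the identity on $C \vee \Sigma E$ up to homotopy, so in particular the cofibers are homotopy equivalent.

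For part (2), I first note that $(1_C \vee g, \mathrm{id}_E)$ is a homotopy equivalence of CW pairs $(C \vee cE, E) \to (C, E)$: the map $g$ is null homotopic (as a map out of the contractible cone), so $1_C \vee g$ is homotopic to the collapse $C \vee cE \to C$, which is itself a homotopy equivalence. I then promote this to a homotopy equivalence of polyhedral smash products by inducting along the filtration of Section \ref{sec:filtration}, noting that on each stratum $\widehat{D}(\sigma)$ the map factors as a smash product of factorwise homotopy equivalences, and that the pushouts $F_{t-1}\widehat{Z} \cup \widehat{D}(\sigma_t) = F_t \widehat{Z}$ preserve homotopy equivalences because all relevant inclusions are cofibrations. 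The main obstacle is controlling this last step---ensuring that the equivalences at each filtration stage glue coherently at the pushouts---which I expect to be routine once the CW-pair hypothesis is invoked, but is the one place where the based-CW structure of $(E, C)$ has to be used in an essential way.
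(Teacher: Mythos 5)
Your proposal is correct, and parts (1) and (3) essentially coincide with the paper's argument: the paper also builds $g$ by factoring a null homotopy $G\colon E\times I\to C$ through the cone projection $\pi\colon E\times I\to cE$, and for (3) it simply cites homotopy invariance of cofibers for the commuting square \eqref{eqn:coneone} (whose horizontal maps are equivalences), whereas you compute both cofibers explicitly as $C\vee\Sigma E$ --- a harmless and slightly more informative variation. The genuine divergence is in part (2): the paper treats the commuting square \eqref{eqn:coneone} as an objectwise homotopy equivalence between the two diagrams defining the colimits and invokes the Homotopy Lemma of Ziegler--\v{Z}ivaljevi\'c \cite[Lemma 1.7]{zz} in one stroke, while you re-derive that conclusion by hand, inducting over the shortlex filtration of Section \ref{sec:filtration} and applying the gluing lemma at each pushout $F_t\widehat{Z}=F_{t-1}\widehat{Z}\cup\widehat{D}(\sigma_t)$. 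Your route is more elementary and self-contained (it makes visible exactly where the cofibration/CW hypotheses enter, namely in the gluing lemma and in the fact that a smash of based homotopy equivalences of well-pointed complexes is an equivalence), at the cost of redoing work that the cited lemma packages once and for all; the paper's route is shorter and scales immediately to any diagram shape, not just $\partial\overline{\sigma}_t$. The one step you flag as the ``main obstacle'' --- coherence of the equivalences across filtration stages --- is indeed routine here, since all the stratumwise maps are restrictions of the single global map $\mwedge_i(1_{C_i}\vee g_i)$ (smashed with identities on the $E$-factors), so they automatically agree on intersections of strata and the commutativity of \eqref{eqn:coneone} guarantees compatibility with the structure maps of the colimit diagram.
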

\begin{proof}
Since $\iota\colon E \hookrightarrow C$ is null homotopic, the exists a homotopy
$$G\colon E \times I \longrightarrow C$$
satisfying, for every $e\in E$, $G((e,0)) = \iota(e)$ and $G((e,1)) = e_0$, the base point of $E$,  which is also the
basepoint of $C$. 
This implies the existence of an extension
\begin{equation}\label{eqn:coemap}
\hspace{0.0cm}\begin{tikzcd}[column sep = 1cm]
E \times I 
\arrow[rr, "G"] \arrow[dr,"\pi"] &&C\\ 
&cE \arrow[ur,dashed, "g"] 
\end{tikzcd}
\end{equation}
\nd where $\pi$ projects $E\times \{1\}$ to the cone point. The commutativity of \eqref{eqn:coneone} follows.

Item (2)  follows from the Homotopy Lemma, (\cite[Lemma $1.7$]{zz}, because the horizontal maps in
\eqref{eqn:coneone} are homotopy equivalences and the commutativity implies that they give a map of diagrams
defining the colimits $\widehat{Z}\big(\partial\overline{\sigma}_{t};(\underline{C \vee cE},\; \underline{E})\big)$
and $\widehat{Z}\big(\partial\overline{\sigma}_{t};(\underline{C},\; \underline{E})\big)$.

Finally, the third item is the standard homotopy invariance of cofibers, see for example \cite[Theorem $2.3.7$]{mun}.
\end{proof}

Given a pair $(X,A)$ which is strongly free, let $B'$, $C'$ and $E'$ be the $k$--modules  
specified in items (1) and (2) of 
Definition \ref{defn:strongfc} for each pair $(X,A)$.
Now, wedges of spheres $B$, $C$ and $E$ exist with cohomology equal to the modules 
$B'$, $C'$ and $E'$ so that
\begin{equation}\label{eqn:wdpair}
(U,V) = (B\vee C, B \vee E)
 \end{equation}
\nd satisfies the criterion for a wedge decomposable pair as in Definition \ref{def:wedgedecomp}. 
 In particular, the inclusion  $\iota:E \to C$  is a null homotopic inclusion of wedges of spheres. 
Consider next diagram \eqref{eqn:wd} for the pairs $(X,A)$ and $(U,V)$, the latter defined as in \eqref{eqn:wdpair}.
 \begin{lem}\label{lem:xaisouv}
For a pair $(U,V)$ derived from $(X,A)$ in the manner above, there is an isomorphism 
$(f_\ell,f_\iota,f_\delta)\colon (X,A) \longrightarrow (U,V)$ 
of strongly free pairs  as in  \eqref{eqn:les} and \eqref{eqn:wd}.
\end{lem}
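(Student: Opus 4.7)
The overall plan is to first show that the wedge decomposable pair $(U,V) = (B\vee C, B\vee E)$ is itself strongly free with a decomposition structurally identical to that of $(X,A)$, and then to build the morphism $(f_\ell, f_\iota, f_\delta)$ summand by summand using the matching decompositions.

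First I would compute the relevant cohomology of $(U,V)$ explicitly. Because wedges split cohomology additively, one has $\widetilde{H}^{\ast}(V) = \widetilde{H}^{\ast}(B\vee E) \cong B'\oplus E'$ and $\widetilde{H}^{\ast}(U) = \widetilde{H}^{\ast}(B\vee C) \cong B'\oplus C'$. For the quotient, Lemma \ref{ecprops}(3) (replacing $C$ by $C\vee cE$ if needed to realize $E\hookrightarrow C$ as a genuine cofibration) gives a homotopy equivalence $U/V \simeq C/E$, and since $E\hookrightarrow C$ is null homotopic the cofiber sequence splits to yield $C/E \simeq C\vee \Sigma E$. Hence $\widetilde{H}^{\ast}(U/V) \cong C'\oplus \Sigma E'$. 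A direct inspection of the long exact sequence of $(U,V)$ then shows $\iota^{(U,V)}$ is the identity on the $B'$ summand and zero on $C'$ (since $E\to C$ is null homotopic), $\ell^{(U,V)}$ is the identity on $C'$ and zero on $\Sigma E'$, and $\delta^{(U,V)}$ is zero on $B'$ and sends $E'$ isomorphically onto $\Sigma E'$ by suspension. This exhibits the quadruple $(E', B', C', \Sigma E')$ as a strongly free decomposition of $(U,V)$, formally parallel to the decomposition $(E', B', C', \overline{W})$ of $(X,A)$.

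Next, using the two decompositions I would define the three vertical maps by matching summands. Set $f_\iota\colon \widetilde{H}^{\ast}(A) \to \widetilde{H}^{\ast}(V)$ to be the identity $B'\to B'$ and the identity $E'\to E'$ under the chosen identifications; define $f_\ell\colon \widetilde{H}^{\ast}(X)\to \widetilde{H}^{\ast}(U)$ as the identity on both $B'$ and $C'$; and define $f_\delta\colon \widetilde{H}^{\ast}(X/A)\to \widetilde{H}^{\ast}(U/V)$ to be the identity on the $C'$ summand and to be the composite
\[
\overline{W} \xleftarrow[\;\cong\;]{\delta^{(X,A)}} E' \xrightarrow[\;\cong\;]{\delta^{(U,V)}} \Sigma E'
\]
on the remaining summand. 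Each map is an isomorphism by construction. Verifying commutativity of the diagram \eqref{eqn:wd} then reduces to four summand checks: on $B'$ both $\iota$'s are the identity and $f_\iota, f_\ell$ are the identity, so the $\iota$-square commutes; on $C'$ both $\iota$'s vanish, so that square also commutes; on $C'$ both $\ell$'s are the identity and $f_\ell, f_\delta$ are the identity, giving the $\ell$-square; and the $\delta$-square commutes on $E'$ by the very definition of $f_\delta$, while it commutes on $B'$ because both $\delta$'s vanish there. The restriction to the submodules $E', B', C', \overline{W}$ is then automatic from the construction.

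The main obstacle will be the $\delta$-square, because the module $\overline{W}$ is only abstractly specified as the image of $E'$ under $\delta^{(X,A)}$, with no a priori geometric identification with $\Sigma E'$. The cleanest way around this is to use the definition of $f_\delta$ on $\overline{W}$ as the unique $k$-linear map making the $\delta$-square commute on $E'$; strong freeness of $(X,A)$ guarantees that $\delta^{(X,A)}|_{E'}$ is an isomorphism onto $\overline{W}$, and strong freeness of $(U,V)$ guarantees the same for $\delta^{(U,V)}|_{E'}$, so this prescription gives a well-defined isomorphism $\overline{W}\to \Sigma E'$. Once this identification is in place, all remaining verifications are routine diagram chases, and naturality with respect to the underlying choice of generators of $B'$, $C'$, $E'$ is immediate.
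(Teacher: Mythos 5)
Your proposal is correct and follows essentially the same route as the paper: define $f_\ell$ and $f_\iota$ directly from the strongly free decomposition of $(X,A)$, use Lemma \ref{ecprops} (replacing $C$ by $C\vee cE$) to identify $U/V\simeq \Sigma E\vee C$ and hence $\widetilde{H}^{\ast}(U/V)\cong C'\oplus \Sigma E'$, and define $f_\delta$ via the isomorphism $E'\xrightarrow{\delta}\overline{W}$ so that the $\delta$-square commutes. The paper states the commutativity checks as following "from the definitions," whereas you carry them out summand by summand; the content is the same.
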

\begin{proof}
The map $f_l$ is the isomorphism 
$\widetilde{H}^\ast(X) \xrightarrow{\cong} B' \oplus C' = \widetilde{H}^\ast(U)$
and the map $f_\iota$  is the compatible isomorphism 
$\widetilde{H}^\ast(A) \xrightarrow{\cong} B' \oplus E' = \widetilde{H}^\ast(V)$, both from Definition \ref{defn:strongfc}. 
 Since the inclusion $\iota:E \to C$  is null homotopic, we apply Lemma \ref{ecprops} to write it as 
$E \hookrightarrow cE\vee C$ where $cE$ is the  unreduced  cone and the inclusion is onto the base of the cone. 
 Again from Lemma \ref{ecprops}, item (3), it follows that $C/E \;\simeq\; \Sigma{E}\vee C$
and hence $U/V \simeq  \Sigma{E}\vee C$. This gives the isomorphism 
$$f_\delta\colon \widetilde{H}^\ast(X/A) \xrightarrow{\cong} \overline{W}\oplus C'  \xrightarrow{=}
\widetilde{H}^\ast(\Sigma{E}\vee C) \xrightarrow{\cong} \widetilde{H}^\ast(U/V).$$ 
Finally, it follows from the definitions that the diagrams \eqref{eqn:wd} all commute.\end{proof}

 Our goal is to show that under the strong freeness condition of Definition \ref{defn:strongfc},  
an analogue of Theorem \ref{thm:main} holds for pairs $(\underline{X}, \underline{A})$, using  
pairs  $(\underline{U},\underline{V})$ where each $(U_i,V_i)$ has been constructed from $(X_i,A_i)$  via 
\eqref{eqn:wdpair} above.

\begin{thm}\label{thm:cartan2}
Under the conditions stated above, there is an isomorphism of cohomology groups with coefficients
in a field $k$ 
$$\theta_{(\underline{U},\underline{V})}\colon \widetilde{H}^{*}\big(\widehat{Z}(K;(\underline{U},\underline{V}))\big) \longrightarrow
 \widetilde{H}^{*}\big(\widehat{Z}(K;(\underline{X},\underline{A}))\big)$$ 
\nd where the left hand side is determined by  Corollary \ref{cor:wedge}.
 (This is not necessarily an isomorphism of modules over the Steenrod algebra and does not preserve products 
 in general.)
\end{thm}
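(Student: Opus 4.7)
The plan is to construct the isomorphism $\theta_{(\underline{U},\underline{V})}$ by induction along the shortlex filtration $F_t\widehat{Z}$ of Section~\ref{sec:filtration}, applied simultaneously to both $(\underline{X},\underline{A})$ and $(\underline{U},\underline{V})$. Since coefficients lie in a field $k$, Künneth identifies every $\widetilde{H}^{*}(\widehat{D}_{\underline{X},\underline{A}}(\sigma))$ with the graded tensor product of the factors $\widetilde{H}^{*}(X_i)$ and $\widetilde{H}^{*}(A_i)$, and likewise on the $(\underline{U},\underline{V})$ side. The strongly free hypothesis plus Lemma~\ref{lem:xaisouv} supplies compatible isomorphisms $\widetilde{H}^{*}(X_i) \cong \widetilde{H}^{*}(U_i)$, $\widetilde{H}^{*}(A_i) \cong \widetilde{H}^{*}(V_i)$, and $\widetilde{H}^{*}(X_i/A_i) \cong \widetilde{H}^{*}(U_i/V_i)$, so tensor products on each side will be matched factor-by-factor.

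For the base case $t=0$, formula \eqref{eqn:F0} identifies $F_0\widehat{Z}(K;(\underline{X},\underline{A}))$ with $A_1 \wedge \cdots \wedge A_m$ and $F_0\widehat{Z}(K;(\underline{U},\underline{V}))$ with $V_1 \wedge \cdots \wedge V_m$; Künneth together with the $\widetilde{H}^{*}(A_i) \cong \widetilde{H}^{*}(V_i)$ isos of Lemma~\ref{lem:xaisouv} then produces the iso at stage $0$. For the inductive step, I would work with the cofiber sequence
$$F_{t-1}\widehat{Z} \hookrightarrow F_{t}\widehat{Z} \longrightarrow F_{t}\widehat{Z}/F_{t-1}\widehat{Z}.$$
Following \cite[Section 2]{bbcg10}, the cofiber on the $(\underline{X},\underline{A})$ side is a smash product of $X_i/A_i$ for $i \in \sigma_t$ with $A_j$ for $j \in [m]-\sigma_t$ (ghost vertices included), and the analogous description holds for $(\underline{U},\underline{V})$. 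Tensoring together the pair-isomorphisms of Lemma~\ref{lem:xaisouv} over all factors identifies the two cofiber cohomologies, and the Five Lemma applied to the resulting ladder of long exact cohomology sequences upgrades the inductive iso at stage $t-1$ to one at stage $t$.

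The main obstacle is verifying that this ladder actually commutes, especially at the connecting homomorphism $\delta$. Commutativity for the inclusion- and projection-induced maps reduces to naturality of Künneth under the pair morphisms prescribed by \eqref{eqn:wd}. For $\delta$ itself one needs the full strength of the strongly free decomposition: the splittings $\widetilde{H}^{*}(A) \cong B' \oplus E'$, $\widetilde{H}^{*}(X) \cong B' \oplus C'$, $\widetilde{H}^{*}(X/A) \cong C' \oplus \overline{W}$ are matched on both sides by $f_\ell, f_\iota, f_\delta$, so that $\delta$ acts by the same $E' \xrightarrow{\cong} \overline{W}$ identification for both pairs. This matching is exactly what propagates through the Künneth tensor product to the cofiber level.

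A cleaner alternative, which I would actually prefer to write up, bypasses the Five Lemma bookkeeping entirely: the filtration above is precisely the one driving the spectral sequence of \cite[Theorem~$5.4$]{bbcg10}, and its $E_1$-page is functorial in, and depends only on, the strongly free data $(E',B',C',\overline{W})$ attached to each pair. Since $(X_i,A_i)$ and $(U_i,V_i)$ share this data by construction of $(U_i,V_i)$ via \eqref{eqn:wdpair}, the spectral sequences for the two sides are isomorphic from $E_1$ onward, hence at $E_\infty$. For the wedge decomposable side, Corollary~\ref{cor:wedge} already exhibits $\widetilde{H}^{*}(\widehat{Z}(K;(\underline{U},\underline{V})))$ as the expected sum of link-indexed pieces, so the comparison of $E_\infty$-terms delivers the desired abstract isomorphism of graded $k$-vector spaces $\theta_{(\underline{U},\underline{V})}$, with the explicit disclaimers of the Remark about Steenrod operations and products.
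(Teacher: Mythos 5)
Your first argument has the right skeleton (induction on the shortlex filtration, comparison of the filtration quotients via K\"unneth and the strongly free data), but it glosses over exactly the point where the paper has to work hardest. There is no map of spaces between $\widehat{Z}(K;(\underline{X},\underline{A}))$ and $\widehat{Z}(K;(\underline{U},\underline{V}))$, so the vertical arrows in your ladder must be built by hand, and the Five Lemma is only available once the ladder is known to commute. The square you must control is
\begin{equation*}
\begin{tikzcd}
\widetilde{H}^{\ast}\big(F_{t-1}\widehat{Z}(K;(\underline{U},\underline{V}))\big) \arrow[r,"\delta^{(U,V)}_t"] \arrow[d] & \widetilde{H}^{\ast+1}(\mathcal{C}_{(\underline{U},\underline{V})}) \arrow[d] \\
\widetilde{H}^{\ast}\big(F_{t-1}\widehat{Z}(K;(\underline{X},\underline{A}))\big) \arrow[r,"\delta^{(X,A)}_t"] & \widetilde{H}^{\ast+1}(\mathcal{C}_{(\underline{X},\underline{A})}),
\end{tikzcd}
\end{equation*}
where the left vertical map is the isomorphism produced at the previous inductive stage. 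That isomorphism was constructed with no reference to $\delta_t$, and the domain $\widetilde{H}^{\ast}(F_{t-1}\widehat{Z})$ is the cohomology of a union of smash products, not a tensor product of the $\widetilde{H}^{\ast}(A_i)$; so matching the single-pair connecting maps $E'\xrightarrow{\cong}\overline{W}$ does not by itself ``propagate through K\"unneth'' to give commutativity here. The paper's resolution is the geometric splitting of the Grbi\'c--Theriault cofibration for wedge decomposable pairs (Theorem \ref{thm:wdsplitting}): on the $(\underline{U},\underline{V})$ side the sequence \eqref{eqn:splitsequence} breaks into identity and zero components, which is what allows (i) the explicit construction of $\phi_t$ in Lemma \ref{lem:boundaryiso} so that the square commutes \emph{by definition} of $\phi_t$, and (ii) in Section \ref{sec:general}, a replacement of the Five Lemma by direct-sum bookkeeping: both $F_t$-groups split as $\widetilde{H}^{\ast}(C_{\sigma_t})\oplus L$ and $\widetilde{H}^{\ast}(C_{\sigma_t})\oplus L'$, both $F_{t-1}$-groups as $\widetilde{H}^{\ast+1}(\Sigma E_{\sigma_t})\oplus L$ and $\widetilde{H}^{\ast+1}(\Sigma E_{\sigma_t})\oplus L'$, and the inductive hypothesis forces $L\cong L'$ over a field of finite type. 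Without Theorem \ref{thm:wdsplitting} or an equivalent device, your inductive step does not close.

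Your ``cleaner alternative'' is also not a proof as stated: an isomorphism of $E_1$-pages does not yield an isomorphism of $E_\infty$-pages unless it intertwines all the differentials, and showing that the differentials depend only on the strongly free data $(E',B',C',\overline{W})$ is essentially the content of \cite[Theorem 5.4]{bbcg10} --- the very result this paper sets out to re-derive geometrically. So that route either has the same gap in disguise or reduces to citing the prior theorem.
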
 
\begin{cor}\label{cor:equiv }
Let $(\underline{X},\underline{A})$ and $(\underline{Y},\underline{B})$ be two families of strongly
free pairs so that there is an isomorphism $(f_\ell,f_\iota,f_\delta)$ 
of strongly free pairs $(X_i,A_i) \to (Y_i,B_i)$ for each $i\in [m]$ as in  \eqref{eqn:les} and \eqref{eqn:wd},
then there is an isomorphism of groups for cohomology with coefficients in a field $k$ 
$$\widetilde{H}^{*}\big(\widehat{Z}(K;(\underline{X},\underline{A}))\big) \longrightarrow
 \widetilde{H}^{*}\big(\widehat{Z}(K;(\underline{Y},\underline{B}))\big).$$ 
\end{cor}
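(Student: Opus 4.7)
The plan is to factor the desired isomorphism through a common wedge decomposable companion pair. Let $(B'_i, C'_i, E'_i, \overline{W}_i)$ denote the strongly free decomposition of $(X_i, A_i)$ specified in Definition \ref{defn:strongfc}. Because the hypothesis supplies an \emph{isomorphism} $(f_\ell, f_\iota, f_\delta)$ of strongly free pairs $(X_i,A_i) \to (Y_i,B_i)$, and because, by the last sentence of Definition \ref{defn:strongfc}, such a morphism restricts to maps of the submodules $B', C', E', \overline{W}$, the resulting restrictions are themselves isomorphisms of the free graded $k$-modules attached to the two pairs. Thus the decomposition data underlying $(Y_i, B_i)$ agrees with that of $(X_i, A_i)$ up to isomorphism of the free $k$-modules $B'_i, C'_i, E'_i$.

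With this observation in hand, I would build a single wedge decomposable pair $(U_i, V_i) = (B_i \vee C_i,\ B_i \vee E_i)$ as in \eqref{eqn:wdpair}, choosing the wedges of spheres $B_i, C_i, E_i$ to realize the common $k$-modules $B'_i, C'_i, E'_i$. By Lemma \ref{lem:xaisouv}, this single pair $(U_i, V_i)$ is simultaneously an isomorphism of strongly free pairs with both $(X_i, A_i)$ and $(Y_i, B_i)$. Applying Theorem \ref{thm:cartan2} twice then produces isomorphisms of cohomology groups
$$\theta^{(\underline{X},\underline{A})}_{(\underline{U},\underline{V})} \colon \widetilde{H}^{\ast}\big(\widehat{Z}(K;(\underline{U},\underline{V}))\big) \xrightarrow{\ \cong\ } \widetilde{H}^{\ast}\big(\widehat{Z}(K;(\underline{X},\underline{A}))\big),$$
$$\theta^{(\underline{Y},\underline{B})}_{(\underline{U},\underline{V})} \colon \widetilde{H}^{\ast}\big(\widehat{Z}(K;(\underline{U},\underline{V}))\big) \xrightarrow{\ \cong\ } \widetilde{H}^{\ast}\big(\widehat{Z}(K;(\underline{Y},\underline{B}))\big),$$
and the corollary is obtained by forming the composite $\theta^{(\underline{Y},\underline{B})}_{(\underline{U},\underline{V})} \circ \big(\theta^{(\underline{X},\underline{A})}_{(\underline{U},\underline{V})}\big)^{-1}$.

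The main subtlety to verify is that the construction of $(\underline{U},\underline{V})$ in Lemma \ref{lem:xaisouv} depends only on the isomorphism classes of the modules $B'_i, C'_i, E'_i$ appearing in the strongly free decomposition, so the same pair $(\underline{U},\underline{V})$ can legitimately play the role of the wedge decomposable companion for both families. Since Theorem \ref{thm:cartan2} warns that $\theta_{(\underline{U},\underline{V})}$ is in general neither a ring map nor compatible with Steenrod operations, no further naturality statements are needed or claimed here; the composition yields an isomorphism of graded $k$-modules only, which is exactly what Corollary \ref{cor:equiv } asserts. Consequently, the only real obstacle is the bookkeeping that checks the compatibility of the two strongly free decompositions, and this is immediate from the definition of morphism of strongly free pairs.
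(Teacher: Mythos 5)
Your proposal is correct and follows exactly the paper's argument: the paper's proof likewise observes that the wedge decomposable companion pairs $(U_i,V_i)$ built from \eqref{eqn:wdpair} coincide for both families (since the isomorphism of strongly free pairs identifies the modules $B_i', C_i', E_i'$) and then composes the two isomorphisms supplied by Theorem \ref{thm:cartan2}. Your additional remark about why the two strongly free decompositions agree is just a slightly more explicit version of the same bookkeeping.
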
 
\begin{proof}
For each $i\in [m]$, the associated wedge decomposable pairs $(U_i,V_i)$ given by \eqref{eqn:wdpair}
for both $(\underline{X},\underline{A})$ and $(\underline{Y},\underline{B})$ are the same and so the result
follows from Theorem \ref{thm:cartan2}. \end{proof}
\skp{0.1}
The proof of Theorem \ref{thm:cartan2} is centered around a result of
J.~Grbic and S.~Theriault \cite{gt}, as described in \cite[Section 3]{bbcg10}.
 In order to keep track of ghost vertices, we introduce further notation.
For \mbox{$\sigma = \{i_1,i_2,\ldots,i_{n+1}\}$}, with complementary vertices $\{j_1,j_2,\ldots,j_{m-n-1}\}$, set
\begin{equation}\label{eqn:ghost}
\widehat{A}^{[m]-\sigma} = A_{j_1}\wedge A_{j_2}\wedge \cdots\wedge A_{j_{m-n-1}}
\end{equation}
\begin{thm}\cite{gt, bbcg10}\label{thm:gtseq}
For general pairs $(\underline{X},\underline{A})$, the diagram below is commutative diagram of
cofibrations for each $t$, $0\leq t\leq s$.
\small{\begin{equation}\label{eqn:gt}
\begin{tikzcd}
\cdots  \arrow[r, "\delta^{(X,A)}_t"] & F_{t-1}\widehat{Z}(K;(\underline{X},\underline{A}))
\arrow[r, "\iota"] 
 &F_{t}\widehat{Z}(K;(\underline{X},\underline{A})) 
 \arrow[r, "\gamma^{(X,A)}_t"] &  \mathcal{C}_{(\underline{X},\underline{A})} \cdots \\
 \cdots  \arrow[r, "\delta^{(U,V)}_ {\sigma_t}"] &\widehat{Z}(\partial\overline{\sigma}_{t};(\underline{X},\underline{A}))
 \wedge \widehat{A}^{[m]-\sigma}
 \arrow[u, "g_{\partial\sigma_t}"'] \arrow[r, "\iota"]
&\widehat{Z}\overline{\sigma}_{t};(\underline{X},\underline{A}))\arrow[r,"\gamma^{(U,V)}_{\sigma_t} "]  
\wedge \widehat{A}^{[m]-\sigma} 
\arrow[u, "g_{\sigma_t}"]
& \mathcal{C}_{(\underline{X},\underline{A})} \arrow[u, "=", "c_{t}"']\cdots
\end{tikzcd}
\end{equation}}
\nd where here, $\overline{\sigma}_{t}$ and $\partial\overline{\sigma}_{t}$ represents the simplex $\sigma_t$ and 
the boundary of $\overline{\sigma}_{t}$ respectively, considered  without ghost vertices, that is, as simplicial complexes
on $ \{i_1,i_2,\ldots,i_{n+1}\}$, the vertices of the simplex only.
\end{thm}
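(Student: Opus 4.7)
The plan is to establish each row of diagram \eqref{eqn:gt} as a cofibration sequence, to identify the two cofibers as the same space $\mathcal{C}_{(\underline{X},\underline{A})}$, and to observe that the vertical maps $g_{\partial\sigma_t}$ and $g_{\sigma_t}$ are the natural subspace inclusions, so all squares commute by construction. The only nontrivial computation is the identification of $F_{t-1}\widehat{Z}(K;(\underline{X},\underline{A})) \cap \widehat{D}_{\underline{X},\underline{A}}(\sigma_t)$ with the domain appearing on the bottom row. I would first rewrite $\widehat{D}_{\underline{X},\underline{A}}(\sigma_t) = \widehat{Z}(\overline{\sigma}_t;(\underline{X},\underline{A})) \wedge \widehat{A}^{[m]-\sigma_t}$, where the polyhedral smash product on the right is taken on the vertices of $\sigma_t$ alone and the extra factor absorbs the ghost-vertex contribution as in \eqref{eqn:ghost}. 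By \eqref{eqn:flitration} and the definition of the filtration, $F_t\widehat{Z}(K;(\underline{X},\underline{A})) = F_{t-1}\widehat{Z}(K;(\underline{X},\underline{A})) \cup \widehat{D}_{\underline{X},\underline{A}}(\sigma_t)$, so the top row is the canonical cofibration associated with this pushout presentation, whose cofiber is computed by excision as $\widehat{D}_{\underline{X},\underline{A}}(\sigma_t)$ modulo its intersection with $F_{t-1}$.

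The key computation is that intersection. Using distributivity together with the standard identity $\widehat{D}_{\underline{X},\underline{A}}(\sigma) \cap \widehat{D}_{\underline{X},\underline{A}}(\tau) = \widehat{D}_{\underline{X},\underline{A}}(\sigma \cap \tau)$ inside $\widehat{Z}(K;(\underline{X},\underline{A}))$ (valid factorwise because each $A_i \hookrightarrow X_i$), one has $F_{t-1} \cap \widehat{D}_{\underline{X},\underline{A}}(\sigma_t) = \bigcup_{s<t} \widehat{D}_{\underline{X},\underline{A}}(\sigma_s \cap \sigma_t)$. The shortlex ordering now enters in two ways: whenever $s < t$, either $|\sigma_s| < |\sigma_t|$, or else $|\sigma_s| = |\sigma_t|$ with $\sigma_s \neq \sigma_t$, and in both cases $\sigma_s \cap \sigma_t \subsetneq \sigma_t$; conversely, every proper face $\tau \subsetneq \sigma_t$ satisfies $|\tau| < |\sigma_t|$ and hence equals some $\sigma_s$ with $s < t$. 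Therefore the intersection coincides with $\bigcup_{\tau \subsetneq \sigma_t} \widehat{D}_{\underline{X},\underline{A}}(\tau)$, which factors as $\widehat{Z}(\partial\overline{\sigma}_t;(\underline{X},\underline{A})) \wedge \widehat{A}^{[m]-\sigma_t}$, exactly the lower-left corner of \eqref{eqn:gt}.

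With this identification in hand, the bottom row is the cofibration obtained by smashing the inclusion $\widehat{Z}(\partial\overline{\sigma}_t;(\underline{X},\underline{A})) \hookrightarrow \widehat{Z}(\overline{\sigma}_t;(\underline{X},\underline{A}))$ with $\widehat{A}^{[m]-\sigma_t}$, and its cofiber agrees with $\mathcal{C}_{(\underline{X},\underline{A})}$ by the excision argument above, so $c_t$ may be taken to be the identity. The maps $g_{\partial\sigma_t}$ and $g_{\sigma_t}$ are the subspace inclusions produced above from the factorwise inclusions $A_i \hookrightarrow X_i$, and commutativity of the horizontal squares is then immediate from the construction. The main obstacle is the intersection calculation: one has to exploit both directions of the shortlex order (shorter faces come earlier; equal-length incomparable faces are distinct from $\sigma_t$) and simultaneously keep the partition of $[m]$ into vertices of $\sigma_t$ and ghost vertices in play, so that the $\widehat{A}^{[m]-\sigma_t}$ factor lines up exactly on both sides of the diagram.
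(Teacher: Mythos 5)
Your argument is correct and is essentially the proof of the cited sources \cite{gt,bbcg10} (the present paper only quotes this result): the filtration step attaches $\widehat{D}_{\underline{X},\underline{A}}(\sigma_t)$ to $F_{t-1}\widehat{Z}(K;(\underline{X},\underline{A}))$, and the shortlex order guarantees that the intersection is precisely $\mcup_{\tau\subsetneq\sigma_t}\widehat{D}_{\underline{X},\underline{A}}(\tau)=\widehat{Z}(\partial\overline{\sigma}_t;(\underline{X},\underline{A}))\wedge\widehat{A}^{[m]-\sigma_t}$, so both rows have the same cofiber and $c_t$ is the identity. The one point you could state explicitly is that the factorwise identity $\widehat{D}(\sigma)\cap\widehat{D}(\tau)=\widehat{D}(\sigma\cap\tau)$ survives the passage from Cartesian to smash products because the quotient by the fat wedge only identifies points mapping to the basepoint, which already lies in every $\widehat{D}(\tau)$.
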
 
\skp{0.2}
An outline of the proof of Theorem \ref{thm:cartan2} is as follows.
\begin{enumerate}\itemsep2mm
\item The lower cofibration in \eqref{eqn:gt} is analyzed geometrically in the case that
$(\underline{X},\underline{A})$  is a wedge decomposable pair $(\underline{U},\underline{V})$. This is done in
Section \ref{sec:wdcofibl}.
\item  In Section \ref{sec:boundary} these results are used then to prove Theorem \ref{thm:cartan2} for $K = \partial{\sigma_t}$, the 
boundary of a simplex.
\item An inductive argument in Section \ref{sec:general} uses Diagram \eqref{eqn:gt} to complete the proof for general $K$.
\end{enumerate}
\section{\hspace{2mm}The canonical cofibration for wedge decomposable pairs}\label{sec:wdcofibl}
The three spaces in the lower cofibration of \eqref{eqn:gt} are now analyzed in the case
of a wedge-decomposable pair 
$(\underline{U},\underline{V}) = (\underline{B\vee C},\underline{B\vee E})$.  For $\sigma_{t}$ an 
$n$-simplex on vertices $\{i_{1},i_{2},\ldots,i_{n+1}\}$, with no ghost vertices, the identification of the
space $\widehat{Z}(\overline{\sigma}_{t};(\underline{U},\underline{V}))$ is straightforward.
\begin{equation}\label{eqn:candd}
\widehat{Z}(\overline{\sigma}_{t};(\underline{U},\underline{V})) \;=\;
(B_{i_{1}} \vee C_{i_{1}})\wedge (B_{i_{2}} \vee C_{i_{2}})\wedge \cdots\wedge (B_{i_{n+1}} \vee C_{i_{n+1}})
\end{equation}
$$\simeq\; C_{i_{1}} \wedge C_{i_{2}} \wedge \cdots \wedge C_{i_{n+1}} \vee \overline{D}_{\sigma_t}$$
\nd where $\overline{D}_{\sigma_t}$ is a wedge of smash products of spaces $B_{i_{j}}$ and $C_{i_{k}}$.
\skp{0.2}
For general $(\underline{X},\underline{A})$,  the short argument in \cite[Lemma 3.6]{bbcg10}
shows that  the cofiber in Theorem \ref{thm:gtseq}  is given by 
\begin{equation}\label{eqn:cofiberxa}
\mathcal{C}_{(\underline{X},\underline{A})} \simeq 
X_{i_{1}}/A_{i_{1}}\wedge X_{i_{2}}/A_{i_{2}}\wedge \ldots \wedge X_{i_{n+1}}/A_{i_{n+1}} \wedge \widehat{A}^{[m]-\sigma}
\end{equation}
\nd which we write as
\begin{equation}\label{eqn:cofiberbar}
\mathcal{C}_{(\underline{X},\underline{A})} \simeq \overline{\mathcal{C}}_{(\underline{X},\underline{A})}
\wedge \widehat{A}^{[m]-\sigma} 
\end{equation}

 Once again, for 
$(\underline{X},\underline{A}) = (\underline{U},\underline{V}) =(\underline{B\vee C},\underline{B\vee E})$,
we use Lemma \ref{ecprops} to replace the inclusion $E_{i_{j}} \hookrightarrow C_{i_{j}}$ with
$E_{i_{j}} \hookrightarrow cE_{i_{j}}\vee C_{i_{j}}$ where $cE_{i_{j}}$ is the cone. Again, we have $C_{i_{j}}/E_{i_{j}} 
\;\simeq\; \Sigma{E_{i_{j}}}\vee C_{i_{j}}$ and we get the next lemma. 

\begin{lem}\label{lem:cuv}
The cofiber  $\overline{\mathcal{C}}_{(\underline{U},\underline{V})}$, \eqref{eqn:cofiberbar}, for a wedge decomposable 
family of pairs, decomposes homotopically into a wedge of spaces as follows.
\begin{align*}\label{eqn:cuv}
\begin{split}
 \overline{\mathcal{C}}_{(\underline{U},\underline{V})} \;&\simeq\; (\Sigma{E_{i_{1}}}\vee C_{i_{1}}) 
\wedge (\Sigma{E_{i_{2}}}\vee C_{i_{2}}) \wedge
\ldots \wedge (\Sigma{E_{i_{n+1}}}\vee C_{i_{n+1}})\\
&\simeq\; \Sigma\big({E_{i_{1}}} \ast {E_{i_{2}}} \ast \cdots \ast {E_{i_{n+1}}}\big)\\ 
&\hspace{0.25in}\vee\;  \mvee_{j=1}^{n+1}\;\; C_{i_{j}.} \wedge \Sigma\big({E_{i_{1}}} \ast {E_{i_{2}}} \ast 
\cdots \ast \widehat{E}_{i_{j}} \ast \cdots  \ast {E_{i_{n+1}}}\big)\\
&\hspace{0.25in}\vee\; \mvee_{k_{1}<k_{2}}^{n+1}\;\; C_{i_{k_{1}}} \wedge C_{i_{k_{2}}}  \wedge 
\Sigma\big({E_{i_{1}}} \ast {E_{i_{2}}} \ast 
\cdots \ast \widehat{E}_{i_{k_{1}}} \ast \cdots \ast \widehat{E}_{i_{k_{2}}} \ast
\cdots  \ast {E_{i_{n+1}}}\big)\\
&\hspace{0.25in}\vee\; \cdots\\
&\hspace{0.25in}\vee\;  \mvee_{j = 1}^{n+1}\Sigma{E_{i_{j}}} \wedge C_{i_{1}} \wedge C_{i_{2}} \wedge \ldots  
\wedge \widehat{C}_{i_{j}}
\wedge \ldots \wedge C_{i_{n+1}} \\
&\hspace{0.25in}\vee\; C_{i_{1}} \wedge C_{i_{2}}  \wedge \ldots \wedge C_{i_{n+1}} 
\end{split}
\end{align*}
\end{lem}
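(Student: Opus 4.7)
The plan is to compute the cofiber directly from the formula for a wedge decomposable pair and then distribute smash products across wedges, using the standard join/suspension identity to put the result into the stated form.

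First I will identify $U_{i_j}/V_{i_j}$ for each $j$. Since $(U_{i_j}, V_{i_j}) = (B_{i_j}\vee C_{i_j},\, B_{i_j}\vee E_{i_j})$, collapsing $V_{i_j}$ in $U_{i_j}$ collapses the common $B_{i_j}$ and quotients $C_{i_j}$ by $E_{i_j}$, giving $U_{i_j}/V_{i_j}\simeq C_{i_j}/E_{i_j}$. Applying Lemma \ref{ecprops}(1) to replace $\iota\colon E_{i_j}\hookrightarrow C_{i_j}$ by the inclusion $E_{i_j}\hookrightarrow cE_{i_j}\vee C_{i_j}$ into the base of the cone, Lemma \ref{ecprops}(3) (homotopy invariance of cofibers) then yields
\[
U_{i_j}/V_{i_j}\;\simeq\;C_{i_j}/E_{i_j}\;\simeq\;(cE_{i_j}\vee C_{i_j})/E_{i_j}\;\simeq\;\Sigma E_{i_j}\vee C_{i_j}.
\]
Substituting into the formula \eqref{eqn:cofiberbar} for $\overline{\mathcal{C}}_{(\underline{U},\underline{V})}$ gives the first claimed equivalence
\[
\overline{\mathcal{C}}_{(\underline{U},\underline{V})}\;\simeq\;\mwedge_{j=1}^{n+1}\bigl(\Sigma E_{i_j}\vee C_{i_j}\bigr).
\]

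Next I will distribute smash over wedge. Each of the $n+1$ factors has two summands, so the smash product expands into a wedge of $2^{n+1}$ terms indexed by subsets $S\subseteq\{1,\ldots,n+1\}$, where on coordinate $j\in S$ we choose $\Sigma E_{i_j}$ and otherwise we choose $C_{i_j}$. Grouping these terms by $|S|$ produces the $n+2$ blocks displayed in the statement (from ``all $\Sigma E$'s'' down to ``all $C$'s''). To convert each block into its stated form, I will use the well-known identification $\Sigma X\wedge \Sigma Y\simeq \Sigma(X*Y)$ and, inductively, $\Sigma X_1\wedge\cdots\wedge \Sigma X_r\simeq \Sigma(X_1*\cdots *X_r)$, which is valid for well-pointed CW complexes (and only requires that one suspension coordinate be pulled outside the smash product). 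Applying this to the smash of the $\Sigma E_{i_j}$'s in each block converts the remaining factors $C_{i_{k_1}},\ldots,C_{i_{k_p}}$ into a single smash $C_{i_{k_1}}\wedge\cdots\wedge C_{i_{k_p}}$ wedged with a suspension of the join of the omitted $E_{i_j}$'s, exactly as listed.

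The main obstacle is essentially bookkeeping: making sure the index conventions (in particular the placement of hats on the omitted factors and the ordering of indices $k_1<k_2<\cdots$ in each sub-wedge) exactly match the display. This is a combinatorial check rather than a substantive topological difficulty. Both extreme blocks are degenerate cases handled separately: when $S=\{1,\ldots,n+1\}$ the join of all $E_{i_j}$'s appears with no $C$-factor, and when $S=\varnothing$ we obtain the bare smash $C_{i_1}\wedge\cdots\wedge C_{i_{n+1}}$ with no suspension factor. Neither requires the join identity, so the decomposition is consistent across all blocks. Once the combinatorics is verified, the proof is complete. \hfill $\Box$
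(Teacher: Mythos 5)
Your proposal is correct and follows the same route as the paper: identify $U_{i_j}/V_{i_j}\simeq C_{i_j}/E_{i_j}\simeq \Sigma E_{i_j}\vee C_{i_j}$ via Lemma \ref{ecprops}, substitute into \eqref{eqn:cofiberbar}, distribute the smash product over the wedges, and convert each block of suspensions into a suspended join using $\Sigma(S\wedge T)\simeq S\ast T$ iteratively. The remaining work is exactly the bookkeeping you describe, so nothing is missing.
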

\nd where here,  the equivalence $\Sigma(S \wedge T) \simeq S \ast T$ has been used iteratively.
On the other hand, for the simplicial complex $\partial{\overline{\sigma}_t}$ we have the following lemma.
\begin{lem}\label{lem:boundaryuv}
The polyhedral smash product 
$\widehat{Z}\big(\partial\overline{\sigma}_{t};(\underline{C \vee cE}, \underline{E})\big)$ decomposes
homotopically into a wedge os spaces as follows.
\skp{0.15}
\nd \phantom{mm}$\ds{\widehat{Z}\big(\partial\overline{\sigma}_{t};(\underline{C},\; \underline{E})\big)
\simeq \hspace{0.065in} \widehat{Z}\big(\partial\overline{\sigma}_{t};(\underline{C \vee cE},\; \underline{E})\big)}$
\begin{align*}
&\simeq \hspace{0.065in} \mcup_{k=1}^{n+1}\; (C_{i_{1}}\vee cE_{i_{1}}) \wedge\ldots \wedge
(C_{i_{k-1}}\vee cE_{i_{k-1}})  \wedge E_{i_{k}} \wedge  (C_{i_{k+1}}\vee cE_{i_{k+1}}) \wedge \ldots \wedge
(C_{i_{n+1}}\vee cE_{i_{n+1}}) \\
\\
&\simeq\hspace{0.065in} {E_{i_{1}}} \ast {E_{i_{2}}} \ast \cdots \ast {E_{i_{n+1}}}\\ 
&\hspace{0.25in}\vee\;  \mvee_{j=1}^{n+1}\;\; C_{i_{j}} \wedge {E_{i_{1}}} \ast {E_{i_{2}}} \ast 
\cdots \ast \widehat{E}_{i_{j}} \ast \cdots  \ast {E_{i_{n+1}}}\\
&\hspace{0.25in}\vee  \mvee_{k_{1}<k_{2}}^{n+1}\;\; C_{i_{k_{1}}} \wedge C_{i_{k_{2}}}  \wedge 
\big({E_{i_{1}}} \ast {E_{i_{2}}} \ast 
\cdots \ast \widehat{E}_{i_{k_{1}}} \ast \cdots \ast \widehat{E}_{i_{k_{2}}} \ast
\cdots  \ast {E_{i_{n+1}}}\big)\\
&\hspace{0.25in}\vee\;   \cdots\\
&\hspace{0.25in}\vee\;  \mvee_{j = 1}^{n+1}{E_{i_{j}}} \wedge C_{i_{1}} \wedge C_{i_{2}} \wedge \ldots  
\wedge \widehat{C}_{i_{j}}
\wedge \ldots \wedge C_{i_{n+1}} \\
\end{align*}
\end{lem}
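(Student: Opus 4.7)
My plan is to attack Lemma \ref{lem:boundaryuv} by dispatching the three equivalences in its statement in order. The opening equivalence, replacing the pair $(\underline{C},\underline{E})$ by $(\underline{C\vee cE},\underline{E})$, is a direct application of Lemma \ref{ecprops}(2). The subsequent union description is then immediate from the definition of the polyhedral smash product: the maximal simplices of $\partial\overline{\sigma}_t$ are precisely the $n+1$ codimension-one faces $\widehat{\sigma}_k = \{i_1,\ldots,i_{n+1}\}\setminus\{i_k\}$, and the corresponding $\widehat{D}(\widehat{\sigma}_k)$ appears as the $k$-th term in the stated union.

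The only substantive step is the third equivalence, which I will obtain by applying the Wedge Lemma (Theorem \ref{thm:wlemma}) directly to $\widehat{Z}(\partial\overline{\sigma}_t;(\underline{C},\underline{E}))$. The hypothesis that $E_{i_j}\hookrightarrow C_{i_j}$ is null-homotopic holds by the wedge decomposable assumption, so the Wedge Lemma gives
$$\widehat{Z}(\partial\overline{\sigma}_t;(\underline{C},\underline{E})) \;\simeq\; \bigvee_{\sigma\in \partial\overline{\sigma}_t}|\text{lk}_\sigma(\partial\overline{\sigma}_t)|\ast \widehat{D}^{[n+1]}_{\underline{C},\underline{E}}(\sigma).$$
The task therefore reduces to identifying each wedge summand on the right.

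For $\sigma\in \partial\overline{\sigma}_t$ with $|\sigma|=p$, the link $\text{lk}_\sigma(\partial\overline{\sigma}_t)$ is the boundary of the $(n-p)$-simplex on the complementary $n+1-p$ vertices (since $\tau\cup\sigma$ is required to be a \emph{proper} face of the top simplex), so its realization is $S^{n-p-1}$, with the convention $S^{-1}=\varnothing$ in the boundary case $p=n$ making the join act as the identity. Combining $S^{k}\ast Y\simeq \Sigma^{k+1}Y$ with the iterated identity $X_1\ast\cdots\ast X_r\simeq \Sigma^{r-1}(X_1\wedge\cdots\wedge X_r)$, each summand collapses to $\bigl(\bigwedge_{j\in \sigma}C_{i_j}\bigr)\wedge\bigl(\ast_{j\notin \sigma}E_{i_j}\bigr)$. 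Sorting the full wedge by $p=|\sigma|$ then produces the successive lines of the stated decomposition: $p=0$ gives the single iterated join $E_{i_1}\ast\cdots\ast E_{i_{n+1}}$; $p=1$ gives the sub-wedge with one $C_{i_j}$ and an $n$-fold join of the remaining $E$'s; and so on down to $p=n$, which yields the terms $E_{i_j}\wedge C_{i_1}\wedge\cdots\wedge \widehat{C}_{i_j}\wedge \cdots\wedge C_{i_{n+1}}$.

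The principal obstacle I anticipate is purely bookkeeping: handling the $|\sigma|=n$ boundary case with the $S^{-1}$ convention, and tracking the suspensions absorbed by each iterated join so that the number of $E$-factors in the resulting join matches the cardinality of the complement of $\sigma$. Beyond this combinatorial care, no further geometric input is required, since every ingredient is already provided by Lemma \ref{ecprops} and Theorem \ref{thm:wlemma}.
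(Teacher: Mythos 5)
Your proposal is correct, but it reaches the third equivalence by a different route than the paper. The paper works entirely by hand: after replacing $(\underline{C},\underline{E})$ by $(\underline{C\vee cE},\underline{E})$ via Lemma \ref{ecprops}(2) and writing the smash polyhedral product as the union over the $n+1$ maximal faces of $\partial\overline{\sigma}_t$, it distributes the smash products over the wedges and collapses the resulting unions iteratively using the identity $(Y\wedge cY)\cup_{Y\wedge Y}(cY\wedge Y)\simeq Y\ast Y$; the Wedge Lemma is only mentioned afterwards, in a remark, as a consistency check. You instead invoke Theorem \ref{thm:wlemma} directly, identify $|\mathrm{lk}_\sigma(\partial\overline{\sigma}_t)|$ as $S^{n-p-1}$ for a face $\sigma$ with $p$ vertices (with the $S^{-1}=\varnothing$ convention for maximal faces), and convert $S^{n-p-1}\ast\widehat{D}(\sigma)$ into the stated smash-of-$C$'s-with-join-of-$E$'s via $S^{k}\ast Y\simeq\Sigma^{k+1}Y$ and the iterated join identity; your suspension count ($n-p$ suspensions on each side) is right, and the absent top face correctly accounts for the missing all-$C$ summand. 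Your route is shorter and leans on an already-proved general theorem; what the paper's explicit cone manipulation buys is that the very same cones and identifications reappear verbatim in the decomposition of the cofiber $\overline{\mathcal{C}}_{(\underline{U},\underline{V})}$ in Lemma \ref{lem:cuv}, which makes the term-by-term comparison yielding \eqref{eqn:cofibre} and the splitting of Theorem \ref{thm:wdsplitting} immediate, whereas with your argument that comparison is only an abstract homotopy equivalence of wedge summands. For the lemma as stated, however, your argument is complete.
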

\skp{-0.8}
\nd where this time, the equivalence 
$(Y \wedge cY)\mcup_{Y\wedge Y} (cY\wedge Y) \simeq Y\ast Y$ as been used iteratively.

\begin{rem}
This decomposition is the same as that given by the Wedge Lemma, 
\cite[Theorem $2.12$]{bbcg1}, in the case $K =  \partial\sigma$, the boundary of a simplex.
\end{rem}

 Finally, comparing the two decompositions above, we arrive at the identity
\begin{equation}\label{eqn:cofibre}
\overline{\mathcal{C}}_{(\underline{U},\underline{V})}\;\simeq\; 
\Sigma\widehat{Z}\big(\partial\overline{\sigma}_t;(\underline{C},\; \underline{E})\big) \;\vee\;
\big(C_{i_{1}} \wedge C_{i_{2}}  \wedge \ldots \wedge C_{i_{n+1}}\big). 
\end{equation}

\begin{defin}\label{defn:ghost}
We introduce now notational abbreviations which account for ghost vertices.  Here,
$(\underline{U}, \underline{V}) = (\underline{B\vee C}, \underline{B\vee E})$  and $\overline{\sigma}_t$,
$\partial\overline{\sigma}_t$ are as in Theorem \ref{thm:gtseq}.
\begin{enumerate}
\item $\widehat{Z}\big(\sigma_t;(\underline{Y}, \underline{Q})\big) = 
\widehat{Z}\big(\overline{\sigma}_t;(\underline{Y}, \underline{Q})\big) \wedge \widehat{Q}^{[m]-\sigma_t}$\;
for any family of pairs $(\underline{Y}, \underline{Q})$.
\item $\widehat{Z}\big(\partial\sigma_t;(\underline{Y}, \underline{Q})\big) = 
\widehat{Z}\big(\partial\overline{\sigma}_t;(\underline{Y}, \underline{Q})\big) \wedge \widehat{Q}^{[m]-\sigma_t}$\;
for any family of pairs $(\underline{Y}, \underline{Q})$.
\item $E_{{\sigma}_{t}} = \widehat{Z}\big(\partial\overline{\sigma}_t;(\underline{C}, \underline{E})\big) \wedge \widehat{V}^{[m]-\sigma_t}$
\item $C_{\sigma_t} = C_{i_{1}} \wedge C_{i_{2}}  \wedge \ldots \wedge C_{i_{n+1}} \wedge \widehat{V}^{[m]-\sigma_t}$
\item $\mathcal{C}_{(\underline{Y},\underline{Q})} = \overline{\mathcal{C}}_{(\underline{Y},\underline{Q})} 
\wedge \widehat{Q}^{[m]-\sigma_t}$\;
for any family of pairs $(\underline{Y}, \underline{Q})$.
\item $D_{\sigma_t} = \overline{D}_{\sigma_t} \wedge \widehat{V}^{[m]-\sigma_t}$, (see \eqref{eqn:candd}).
\end{enumerate}
\end{defin}
\skp{0.2} 
\begin{thm}\label{thm:wdsplitting}
\nd For the  pair $(\underline{U}, \underline{V}) = (\underline{B\vee C}, \underline{B\vee E})$,  the lower
sequence corresponding to \eqref{eqn:gt} 
\small{\begin{equation*}\label{eqn:split}
\longrightarrow \widehat{Z}\big(\partial\sigma_t;(\underline{U}, \underline{V})\big)  \xrightarrow{i}
\widehat{Z}\big(\sigma_t;(\underline{U}, \underline{V})\big) \
\xrightarrow{\gamma^{(U,V)}_{\sigma_t}} \mathcal{C}_{(\underline{U},\underline{V})}
\xrightarrow{\delta^{(U,V)}_{\sigma_t}}
\Sigma\widehat{Z}\big(\partial\sigma_t;(\underline{U}, \underline{V})\big) \xrightarrow{i}
\end{equation*}}
\nd splits geometrically as: 
\begin{equation}\label{eqn:splitsequence}
\longrightarrow E_{\sigma_t} \vee D_{\sigma_t}  \xrightarrow{i}
C_{\sigma_t} \vee D_{\sigma_t}  
\xrightarrow{\gamma^{(U,V)}_{\sigma_t}} \Sigma{E_{\sigma_t}}\;\vee\; C_{\sigma_t} \xrightarrow{\delta^{(U,V)}_{\sigma_t}}
\Sigma{E_{\sigma_t}} \vee \Sigma{D_{\sigma_t}}  \xrightarrow{i}
\end{equation}
\nd where the function 
$i$ maps $D_{\sigma_t}$ by the identity and the function $\gamma^{(U,V)}_{t}$ maps $C_{\sigma_t}$ by the identity.
\end{thm}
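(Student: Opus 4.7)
The plan is to identify each space in the lower cofibration of \eqref{eqn:gt} via the Cartan formula (Theorem \ref{thm:main}), observe that the inclusion $i$ restricts to the identity on a common wedge summand $D_{\sigma_t}$, show that the remaining portion of $i$ is null homotopic, and then read off the Puppe sequence as the wedge of two elementary Puppe sequences. Applying Theorem \ref{thm:main} to both $\overline{\sigma}_t$ and $\partial\overline{\sigma}_t$ regarded as simplicial complexes on $\{i_1,\ldots,i_{n+1}\}$, I observe that for each proper subset $I \subsetneq \{i_1,\ldots,i_{n+1}\}$ the full subcomplexes $(\overline{\sigma}_t)_I$ and $(\partial\overline{\sigma}_t)_I$ coincide (both equal the simplex on $I$), so the corresponding wedge summands of the two Cartan decompositions agree; collected, they constitute $\overline{D}_{\sigma_t}$ of \eqref{eqn:candd}. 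The decompositions differ only in the $I=\{i_1,\ldots,i_{n+1}\}$ summand, which is $C_{i_1}\wedge\cdots\wedge C_{i_{n+1}}$ for $\overline{\sigma}_t$ and $\widehat{Z}(\partial\overline{\sigma}_t;(\underline{C},\underline{E}))$ for $\partial\overline{\sigma}_t$. Smashing with $\widehat{V}^{[m]-\sigma_t}$ and invoking Definition \ref{defn:ghost} yields
$$\widehat{Z}(\sigma_t;(\underline{U},\underline{V})) \simeq C_{\sigma_t} \vee D_{\sigma_t},\qquad \widehat{Z}(\partial\sigma_t;(\underline{U},\underline{V})) \simeq E_{\sigma_t} \vee D_{\sigma_t},$$
while Lemma \ref{lem:cuv} combined with \eqref{eqn:cofibre} gives $\mathcal{C}_{(\underline{U},\underline{V})} \simeq \Sigma E_{\sigma_t} \vee C_{\sigma_t}$. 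Naturality of the Cartan decomposition in $K$, read off from the inductive construction in Section \ref{sec:thmmain} (passing from $F_{t-1}$ to $F_t$ by adjoining $\sigma_t$ affects only the $I=\sigma_t$ wedge summand), then implies that $i$ is the identity on $D_{\sigma_t}$ and is the induced map $\varphi\colon E_{\sigma_t}\to C_{\sigma_t}$ on the complementary summand.

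Next I would show that $\varphi$ is null homotopic. By Theorem \ref{thm:wlemma}, or more explicitly by Lemma \ref{lem:boundaryuv}, the space $\widehat{Z}(\partial\overline{\sigma}_t;(\underline{C},\underline{E}))$ is a wedge in which every summand contains at least one smash factor $E_{i_k}$, and the corresponding component of $\varphi$ is $E_{i_k}\hookrightarrow C_{i_k}$ smashed with identity maps on the remaining factors. Since $E_{i_k}\hookrightarrow C_{i_k}$ is null homotopic by the wedge decomposability hypothesis, so is each such smash, and hence $\varphi\simeq\ast$. Consequently the cofibre of $\varphi$ is $\Sigma E_{\sigma_t}\vee C_{\sigma_t}$, matching the computation of $\mathcal{C}_{(\underline{U},\underline{V})}$ above; since the cofibre of $\mathrm{id}_{D_{\sigma_t}}$ is contractible, the cofibre of $i=(\varphi,\mathrm{id})$ is $\Sigma E_{\sigma_t}\vee C_{\sigma_t}$, and $\gamma^{(U,V)}_{\sigma_t}$ is the identity on $C_{\sigma_t}$ and collapses $D_{\sigma_t}$ to the basepoint.

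Finally, I form the Puppe sequence of $i$ as the wedge of the Puppe sequences of $\varphi$ and $\mathrm{id}_{D_{\sigma_t}}$. The first contributes the connecting map $\Sigma E_{\sigma_t}\vee C_{\sigma_t}\to\Sigma E_{\sigma_t}$, which is the identity on $\Sigma E_{\sigma_t}$ and zero on $C_{\sigma_t}$, while the second contributes $\ast\to\Sigma D_{\sigma_t}$; wedging these gives $\delta^{(U,V)}_{\sigma_t}\colon\Sigma E_{\sigma_t}\vee C_{\sigma_t}\to\Sigma E_{\sigma_t}\vee\Sigma D_{\sigma_t}$ exactly as asserted in \eqref{eqn:splitsequence}. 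The principal obstacle is the naturality step in the first paragraph: Theorem \ref{thm:main} is stated with naturality in the pair, not in $K$, so I must verify by inspecting the inductive construction in Section \ref{sec:thmmain} that the equivalence is compatible with the inclusion $\partial\overline{\sigma}_t\hookrightarrow\overline{\sigma}_t$ strongly enough to identify the $D_{\sigma_t}$-component of $i$ with the identity; once this is in hand, the remaining arguments (null homotopy of $\varphi$ and assembly of the Puppe sequence) are essentially formal.
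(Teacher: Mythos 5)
Your proposal is correct and follows essentially the same route as the paper, which states Theorem \ref{thm:wdsplitting} as a summary of the immediately preceding material: the identification of $\widehat{Z}(\sigma_t;(\underline{U},\underline{V}))$ via \eqref{eqn:candd}, of $\widehat{Z}(\partial\sigma_t;(\underline{U},\underline{V}))$ via Lemma \ref{lem:boundaryuv} and the Cartan formula, and of the cofibre via Lemma \ref{lem:cuv} and \eqref{eqn:cofibre}. Your added observations — that $i$ is the identity on the common summand $D_{\sigma_t}$, that the component $E_{\sigma_t}\to C_{\sigma_t}$ is null homotopic (which is cleanest using the cone replacement of Lemma \ref{ecprops} so the wedge decomposition of the domain and the null homotopies are applied coherently), and the assembly of the Puppe sequence — are exactly the details the paper leaves implicit.
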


\section{\hspace{2mm}The proof of Theorem \ref{thm:cartan2} for the boundary of a simplex}\label{sec:boundary}
Returning now to \eqref{eqn:gt}, we begin to examine the consequences of Lemma \ref{lem:xaisouv}
 \begin{lem}\label{lem:sigmaiso}
There is an isomorphism of groups
\begin{equation}\label{eqn:sigmaiso}
\widetilde{H}^{*}\big(\widehat{Z}\big(\sigma_t;(\underline{X}, \underline{A})\big)\big) 
\xrightarrow{\cong} \widetilde{H}^{*}\big(\widehat{Z}\big(\sigma_t;(\underline{U}, \underline{V})\big)\big).
\end{equation}
\end{lem}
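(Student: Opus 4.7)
The plan is to unwind Definition \ref{defn:ghost} so that $\widehat{Z}\big(\sigma_t;(\underline{X},\underline{A})\big)$ and $\widehat{Z}\big(\sigma_t;(\underline{U},\underline{V})\big)$ are both exhibited as single smash products, and then reduce the lemma to a Künneth-style assembly of the component-wise isomorphisms produced by Lemma \ref{lem:xaisouv}.

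First I would write, using Definition \ref{defn:ghost}(1), that since $\sigma_t$ is a full simplex on its own vertices $\{i_1,\ldots,i_{n+1}\}$,
\begin{equation*}
\widehat{Z}\big(\sigma_t;(\underline{X},\underline{A})\big) \;=\; X_{i_1}\wedge\cdots\wedge X_{i_{n+1}}\wedge \widehat{A}^{[m]-\sigma_t},
\end{equation*}
and likewise
\begin{equation*}
\widehat{Z}\big(\sigma_t;(\underline{U},\underline{V})\big) \;=\; U_{i_1}\wedge\cdots\wedge U_{i_{n+1}}\wedge \widehat{V}^{[m]-\sigma_t}.
\end{equation*}
Since all of the spaces in sight are based, path-connected CW-complexes of finite type and coefficients are taken in the field $k$, the Künneth theorem applied iteratively gives natural isomorphisms identifying the reduced cohomology of each smash product with the tensor product of the reduced cohomologies of the factors.

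Next I would invoke Lemma \ref{lem:xaisouv}. Applied to each index $i$, it provides compatible isomorphisms
\begin{equation*}
f_{\ell,i}\colon \widetilde{H}^{\ast}(X_i)\xrightarrow{\ \cong\ }\widetilde{H}^{\ast}(U_i), \qquad f_{\iota,i}\colon \widetilde{H}^{\ast}(A_i)\xrightarrow{\ \cong\ }\widetilde{H}^{\ast}(V_i).
\end{equation*}
Taking the graded tensor product of $f_{\ell,i_1}\otimes\cdots\otimes f_{\ell,i_{n+1}}$ with the tensor product over $j\in [m]-\sigma_t$ of the $f_{\iota,j}$ then yields, via the Künneth identifications above, the claimed isomorphism of graded $k$-modules
\begin{equation*}
\widetilde{H}^{\ast}\big(\widehat{Z}(\sigma_t;(\underline{X},\underline{A}))\big)\xrightarrow{\ \cong\ }\widetilde{H}^{\ast}\big(\widehat{Z}(\sigma_t;(\underline{U},\underline{V}))\big).
\end{equation*}

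The only point that requires some care is the book-keeping: one must check that after applying the Künneth identifications, the combined tensor product map is well-defined on the summands indexed by the strong free decomposition of each pair, and that the ordering of factors matches on the two sides. This is essentially bureaucratic once one has Lemma \ref{lem:xaisouv} in hand. I do not expect any genuine obstacle, because the lemma is purely a statement about abstract isomorphism of cohomology groups — no compatibility with Steenrod operations or cup products is being asserted — and the field hypothesis makes Künneth an outright isomorphism with no $\mathrm{Tor}$ terms to worry about.
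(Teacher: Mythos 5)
Your proposal is correct and follows essentially the same route as the paper: unwind the ghost-vertex notation of Definition \ref{defn:ghost} to express both sides as smash products, apply the K\"unneth theorem over the field $k$, and assemble the factorwise isomorphisms $\widetilde{H}^{\ast}(X_i)\cong\widetilde{H}^{\ast}(U_i)$ and $\widetilde{H}^{\ast}(A_j)\cong\widetilde{H}^{\ast}(V_j)$ from Lemma \ref{lem:xaisouv}. The paper's proof is exactly this chain of isomorphisms, so no further comment is needed.
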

\nd {\em Proof\/}.
Applying Lemma \ref{lem:xaisouv} to Definition \ref{defn:ghost} part (3), and using the K\"unneth Theorem,
we have
\begin{align*}
\widetilde{H}^{*}\big(\widehat{Z}\big({\sigma}_t;(\underline{X}, \underline{A})\big)\big) &\;\cong\; 
\widetilde{H}^{*}\big(\widehat{Z}\big(\overline{\sigma}_t;(\underline{X}, \underline{A})\big)\big) \otimes
\widetilde{H}^{*}(\widehat{A}^{[m]-\sigma})\\
&\;\cong\; \widetilde{H}^{*}(X_{i_{1}}\wedge X_{i_{2}}\wedge \cdots \wedge X_{i_{n+1}})\otimes
\widetilde{H}^{*}(A_{j_{1}}\wedge A_{j_{2}}\wedge \cdots \wedge A_{j_{m-n-1}})\\
&\;\cong\; \widetilde{H}^{*}(X_{i_{1}})\otimes \cdots \otimes \widetilde{H}^{*}(X_{i_{n+1}}) \otimes
\widetilde{H}^{*}(A_{j_{1}})\otimes \cdots \otimes \widetilde{H}^{*}(A_{j_{m-n-1}})\\
&\;\cong\; \widetilde{H}^{*}(U_{i_{1}})\otimes  \cdots \otimes \widetilde{H}^{*}(U_{i_{n+1}})\otimes
\widetilde{H}^{*}(V_{j_{1}})\otimes \cdots \otimes \widetilde{H}^{*}(V_{j_{m-n-1}})\\
&\;=\; \widetilde{H}^{*}\big(\widehat{Z}\big({\sigma}_t;(\underline{U}, \underline{V})\big)\big)
\hspace{3.16truein}\square
\end{align*}  
\nd The strong freeness condition and  Lemma \ref{lem:xaisouv} yield the next lemma in an analogous way. 
\begin{lem}\label{lem:cofibreiso}
There is an isomorphism of groups
\begin{equation}\label{eqn:cofibers}
\widetilde{H}^{*}(\mathcal{C}_{(\underline{X},\underline{A})}) \xrightarrow{\cong} \widetilde{H}^{*}(\mathcal{C}_{(\underline{U},\underline{V})}),
\end{equation}
\end{lem}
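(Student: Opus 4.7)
The plan is to imitate the proof of Lemma \ref{lem:sigmaiso}, using the explicit smash-product description \eqref{eqn:cofiberxa} of $\mathcal{C}_{(\underline{X},\underline{A})}$, together with the K\"unneth theorem over the field $k$ and the identifications on cohomology provided by the morphism of strongly free pairs from Lemma \ref{lem:xaisouv}.

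First, I would expand both sides via \eqref{eqn:cofiberxa}. On the one hand,
\[
\mathcal{C}_{(\underline{X},\underline{A})} \simeq (X_{i_{1}}/A_{i_{1}}) \wedge \cdots \wedge (X_{i_{n+1}}/A_{i_{n+1}}) \wedge A_{j_{1}} \wedge \cdots \wedge A_{j_{m-n-1}},
\]
and on the other,
\[
\mathcal{C}_{(\underline{U},\underline{V})} \simeq (U_{i_{1}}/V_{i_{1}}) \wedge \cdots \wedge (U_{i_{n+1}}/V_{i_{n+1}}) \wedge V_{j_{1}} \wedge \cdots \wedge V_{j_{m-n-1}}.
\]
Since coefficients lie in the field $k$, the K\"unneth theorem gives
\[
\widetilde{H}^{*}(\mathcal{C}_{(\underline{X},\underline{A})}) \cong \bigotimes_{r=1}^{n+1} \widetilde{H}^{*}(X_{i_{r}}/A_{i_{r}}) \otimes \bigotimes_{s=1}^{m-n-1} \widetilde{H}^{*}(A_{j_{s}}),
\]
with the analogous formula for $(\underline{U},\underline{V})$.

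Next, I would apply the isomorphism $(f_\ell,f_\iota,f_\delta)\colon(X_i,A_i)\to(U_i,V_i)$ from Lemma \ref{lem:xaisouv} for each vertex $i\in[m]$. The components $f_\delta$ and $f_\iota$ supply factor-by-factor isomorphisms
\[
\widetilde{H}^{*}(X_{i_{r}}/A_{i_{r}}) \xrightarrow{\cong} \widetilde{H}^{*}(U_{i_{r}}/V_{i_{r}}), \qquad \widetilde{H}^{*}(A_{j_{s}}) \xrightarrow{\cong} \widetilde{H}^{*}(V_{j_{s}}),
\]
and taking their tensor product yields the desired isomorphism
\[
\widetilde{H}^{*}(\mathcal{C}_{(\underline{X},\underline{A})}) \xrightarrow{\cong} \widetilde{H}^{*}(\mathcal{C}_{(\underline{U},\underline{V})}).
\]

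There is essentially no hard step here; the strong freeness hypothesis was formulated precisely so that such vertex-wise isomorphisms of long exact sequences exist, and the K\"unneth theorem is unconditional over a field. The only mild subtlety worth flagging is that the isomorphism is not canonical on the nose: it depends on the choice of wedges of spheres $B$, $C$, $E$ realizing the submodules $B'$, $C'$, $E'$ of the strongly free decomposition of $(X,A)$. But any such choice works for the purposes of Theorem \ref{thm:cartan2}, since only the induced map on cohomology is needed in the inductive argument of Section \ref{sec:general}.
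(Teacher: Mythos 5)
Your proposal is correct and follows the same route as the paper's proof: expand $\mathcal{C}_{(\underline{X},\underline{A})}$ via \eqref{eqn:cofiberxa}, apply the K\"unneth theorem over $k$, and use the factor-wise isomorphisms $f_\delta$ and $f_\iota$ from Lemma \ref{lem:xaisouv} to pass to $(\underline{U},\underline{V})$. The closing remark on the non-canonical choice of $B$, $C$, $E$ is a reasonable observation but not needed for the argument.
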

\nd {\em Proof\/}. Lemma \ref{lem:xaisouv}, \eqref{eqn:cofiberxa} and the K\"unneth Theorem give
\begin{align*}
\widetilde{H}^{*}(\mathcal{C}_{(\underline{X},\underline{A})}) 
&\;\cong\; \widetilde{H}^{*}(X_{i_{1}}/A_{i_{1}} \wedge X_{i_{2}}/A_{i_{2}} \wedge \cdots \wedge X_{i_{n+1}}/A_{i_{n+1}}) \otimes
\widetilde{H}^{*}(A_{j_{1}}\wedge A_{j_{2}}\wedge \cdots \wedge A_{j_{m-n-1}})\\
&\;\cong\; \widetilde{H}^{*}(X_{i_{1}}/A_{i_{1}}) \otimes\cdots\otimes \widetilde{H}^{*}(X_{i_{n+1}}/A_{i_{n+1}})\otimes 
\widetilde{H}^{*}(A_{j_{1}})\otimes \cdots \otimes \widetilde{H}^{*}(A_{j_{m-n-1}})\\
&\;\cong\; \widetilde{H}^{*}(U_{i_{1}}/V_{i_{1}}) \otimes\cdots\otimes \widetilde{H}^{*}(U_{i_{n+1}}/V_{i_{n+1}})\otimes 
\widetilde{H}^{*}(V_{j_{1}})\otimes \cdots \otimes \widetilde{H}^{*}(V_{j_{m-n-1}})\\
&\;\cong\; \widetilde{H}^{*}(\mathcal{C}_{(\underline{U},\underline{V})}) \hspace{4.35truein}\square
\end{align*}   
\nd The next lemma extends the isomorphism \eqref{eqn:sigmaiso} to the boundary of a simplex.
\begin{lem}\label{lem:boundaryiso}
There is an isomorphism of groups
$$\phi_t\colon \widetilde{H}^{\ast}\big(\widehat{Z}\big(\partial\sigma_t;(\underline{U}, \underline{V})\big)\big) 
= \widetilde{H}^{\ast}\big(E_{\sigma_{t}}\big) \oplus \widetilde{H}^{\ast}\big(D_{\sigma_{t}}\big)
\longrightarrow \widetilde{H}^{\ast}\big(\widehat{Z}\big(\partial\sigma_t;(\underline{X}, \underline{A})\big)\big).$$
\end{lem}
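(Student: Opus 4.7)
The plan is to compare the cohomology long exact sequence associated with the lower cofibration of \eqref{eqn:gt} applied to $(\underline{X},\underline{A})$ against the one for the wedge decomposable pair $(\underline{U},\underline{V})$, where Theorem \ref{thm:wdsplitting} already supplies a geometric splitting $\widehat{Z}(\partial\sigma_t;(\underline{U},\underline{V})) \simeq E_{\sigma_t} \vee D_{\sigma_t}$ together with analogous splittings of the other two terms. Applying $\widetilde{H}^{\ast}(-;k)$ to the cofibration $\widehat{Z}(\partial\sigma_t;(\underline{X},\underline{A})) \xrightarrow{\iota} \widehat{Z}(\sigma_t;(\underline{X},\underline{A})) \xrightarrow{\gamma^{(X,A)}_{\sigma_t}} \mathcal{C}_{(\underline{X},\underline{A})}$, the K\"unneth theorem identifies $\gamma^{(X,A)\ast}$ as the tensor product $\bigotimes_{k=1}^{n+1} \ell^{(X_{i_k}, A_{i_k})} \otimes \bigotimes_j \mathrm{id}_{\widetilde{H}^{\ast}(A_j)}$, where $\ell^{(X_i,A_i)}$ is the map appearing in \eqref{eqn:les}. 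An identical description holds for $\gamma^{(U,V)\ast}$.

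Next, Lemma \ref{lem:xaisouv} furnishes, for each $i$, an isomorphism of long exact sequences between $(X_i,A_i)$ and $(U_i,V_i)$; tensoring these isomorphisms across the indices in $\sigma_t$ and $[m]-\sigma_t$ yields a commutative square
\begin{equation*}
\begin{tikzcd}[column sep=large]
\widetilde{H}^{\ast}(\mathcal{C}_{(\underline{X},\underline{A})}) \arrow[r, "\gamma^{(X,A)\ast}"] \arrow[d, "\cong"'] & \widetilde{H}^{\ast}(\widehat{Z}(\sigma_t;(\underline{X},\underline{A}))) \arrow[d, "\cong"] \\
\widetilde{H}^{\ast}(\mathcal{C}_{(\underline{U},\underline{V})}) \arrow[r, "\gamma^{(U,V)\ast}"'] & \widetilde{H}^{\ast}(\widehat{Z}(\sigma_t;(\underline{U},\underline{V})))
\end{tikzcd}
\end{equation*}
whose vertical arrows are the isomorphisms of Lemmas \ref{lem:sigmaiso} and \ref{lem:cofibreiso}. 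In particular, the kernel and image of $\gamma^{(X,A)\ast}$ are isomorphic, degree by degree, to those of $\gamma^{(U,V)\ast}$.

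From Theorem \ref{thm:wdsplitting} one reads off that $\ker(\gamma^{(U,V)\ast})$ is precisely the $\widetilde{H}^{\ast}(\Sigma E_{\sigma_t})$ summand of $\widetilde{H}^{\ast}(\mathcal{C}_{(\underline{U},\underline{V})})$, while $\mathrm{image}(\iota^{\ast}_{(U,V)})$ equals the $\widetilde{H}^{\ast}(D_{\sigma_t})$ summand of $\widetilde{H}^{\ast}(\widehat{Z}(\sigma_t;(\underline{U},\underline{V})))$. Via the commutative square, these identifications transport to the long exact sequence for $(\underline{X},\underline{A})$, producing in each degree the short exact sequence
\begin{equation*}
0 \to \widetilde{H}^{\ast}(D_{\sigma_t}) \to \widetilde{H}^{\ast}(\widehat{Z}(\partial\sigma_t;(\underline{X},\underline{A}))) \to \widetilde{H}^{\ast}(E_{\sigma_t}) \to 0,
\end{equation*}
where the left term is $\mathrm{image}(\iota^{\ast}_{(X,A)})$ and the right term is $\mathrm{image}(\delta^{(X,A)})$, identified with $\ker(\gamma^{(X,A)\ast})$ after accounting for the suspension shift $\widetilde{H}^{\ast}(\Sigma E_{\sigma_t}) = \widetilde{H}^{\ast-1}(E_{\sigma_t})$. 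Since $k$ is a field, this sequence splits, yielding $\phi_t$.

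The main obstacle will be to verify that the K\"unneth identification of $\gamma^{\ast}$ as a tensor product of individual quotient maps is genuinely natural with respect to the isomorphism of long exact sequences supplied by Lemma \ref{lem:xaisouv}, so that the comparison square truly commutes and does not introduce twists, and to track the degree shift in $\delta$ carefully when matching the $\Sigma E_{\sigma_t}$ kernel of $\gamma^{(U,V)\ast}$ with the $E_{\sigma_t}$ summand in the target. Once that commutativity is in hand, the remainder is formal from exactness together with the field hypothesis.
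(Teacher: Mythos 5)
Your argument is correct and follows essentially the same route as the paper: both compare the cohomology long exact sequences of the lower cofibration in \eqref{eqn:gt} for $(\underline{X},\underline{A})$ and $(\underline{U},\underline{V})$ via the vertical isomorphisms of Lemmas \ref{lem:sigmaiso} and \ref{lem:cofibreiso}, and both use the geometric splitting of Theorem \ref{thm:wdsplitting} to identify $\operatorname{im}(\iota^{\ast})$ with $\widetilde{H}^{\ast}(D_{\sigma_t})$ and $\ker(\gamma^{(U,V)\ast})$ with $\widetilde{H}^{\ast}(\Sigma E_{\sigma_t})$, concluding over the field $k$. The only cosmetic difference is that the paper constructs $\phi_t$ explicitly on the two summands and invokes the Five Lemma, whereas you extract and split the resulting short exact sequence; your closing remark about verifying commutativity of the comparison square is exactly the point the paper handles via the naturality built into Lemma \ref{lem:xaisouv}.
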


\begin{proof}
\nd Consider the ladder arising from the lower cofibration in \eqref{eqn:gt}, for both $(\underline{X}, \underline{A})$
and  $(\underline{U}, \underline{A})$. We have adopted the notation of \eqref{eqn:splitsequence} for the latter.

\hspace{-12mm}{\footnotesize \begin{tikzcd}
 &\cdots \widetilde{H}^{\ast}\big(\widehat{Z}\big(\sigma_t;(\underline{X}, \underline{A})\big)\big) 
\arrow[r, "\iota"] 
&\widetilde{H}^{\ast}\big(\widehat{Z}\big(\partial\sigma_t;(\underline{X}, \underline{A})\big)\big) 
 \arrow[r, "\delta^{(X,A)}_{\sigma_t}"] 
&\widetilde{H}^{\ast+1}(\mathcal{C}_{(\underline{X},\underline{A}}) \cdots \\
&\cdots \widetilde{H}^{\ast}\big(C_{\sigma_t}\big) \oplus \widetilde{H}^{\ast}\big(D_{\sigma_t}\big)
\arrow[u, "\beta_t"',"\cong"] \arrow[r, "\iota"]
&\widetilde{H}^{\ast}\big(E_{\sigma_{t}}\big) \oplus \widetilde{H}^{\ast}\big(D_{\sigma_{t}}\big) \arrow[r,"\delta^{(U,V)}_{\sigma_t} "]  
\arrow[u, dashed,"\phi_t"]
&\widetilde{H}^{\ast+1}\big(\Sigma{E_{\sigma_t}}_{\sigma_{t}}\big) \oplus \widetilde{H}^{\ast+1}\big(C_{\sigma_{t}}\big) 
\arrow[u, "\kappa_t"',"\cong"]  \cdots
\end{tikzcd}}

We use next the geometric splitting from \eqref{eqn:splitsequence} and the vertical isomorphisms to define a map 
$$\phi_t\colon \widetilde{H}^{\ast}\big(\widehat{Z}\big(\partial\sigma_t;(\underline{U}, \underline{V})\big)\big) 
= \widetilde{H}^{\ast}\big(E_{\sigma_{t}}\big) \oplus \widetilde{H}^{\ast}\big(D_{\sigma_{t}}\big)
\longrightarrow \widetilde{H}^{\ast}\big(\widehat{Z}\big(\partial\sigma_t;(\underline{X}, \underline{A})\big)\big).$$
\nd  Theorem \ref{thm:wdsplitting} gives 
$$\widetilde{H}^{\ast}\big( \widehat{Z}\big(\partial\sigma_t;(\underline{U}, \underline{V})\big)\big)\;\cong\;
\widetilde{H}^{\ast}\big(C_{\sigma_t}\big) \oplus \widetilde{H}^{\ast}(D_{\sigma_t}),$$
and so, working over a field, we consider the splitting
$$\widetilde{H}^{\ast}\big(\widehat{Z}\big(\partial\sigma_t;(\underline{X}, \underline{A})\big)\big) \;\cong\;
\widetilde{H}^{\ast}\big(\widehat{Z}\big(\partial\sigma_t;(\underline{X}, \underline{A})\big)\big)
\big/(\iota\circ \beta_{t})\big(\widetilde{H}^{\ast}(D_{\sigma_t})\big)\; \oplus\; (\iota\circ \beta_{t})\big(\widetilde{H}^{\ast}(D_{\sigma_t})\big). $$ 
For  $d \in \widetilde{H}^{\ast}(D_{\sigma_t})$, set $\phi_{t}(d)  = (\iota\circ \beta_{t})(d)$, and for 
$e \in \widetilde{H}^{\ast}\big(E_{\sigma_t}\big)$, set $\phi_{t}(e)$  
equal to the unique
class $u$ in  
$$\widetilde{H}^{\ast}\big(\widehat{Z}\big(\partial\sigma_t;(\underline{X}, \underline{A})\big)\big)
\big/ (\iota\circ \beta_{t})\big(\widetilde{H}^{\ast}(D_{\sigma_t})\big)$$
such that $\delta^{(X,A)}_{\sigma_t}(u) = \big(\kappa_{t}\circ \delta^{(U,V)}_{\sigma_t}\big)(e)$.  
The diagram commutes by the construction of the map $\phi_t$.
The Five-Lemma implies now that the map $\phi_t$ is an isomorphism. 
\end{proof}

\section{\hspace{2mm}The proof of Theorem \ref{thm:cartan2} for general $K$}\label{sec:general}
\skp{0.2}
We consider below a diagram of vector spaces over a field. It is constructed from the commutative
diagram \eqref{eqn:gt} applied to the pairs $(\underline{U},\underline{V})$ and $(\underline{X},\underline{A})$ and
incorporating the isomorphisms from Lemma \ref{lem:boundaryiso}. We assume by way of induction that
$$\widetilde{H}^{\ast}\big(F_{t-1}\widehat{Z}\big(K;(\underline{U}, \underline{V})\big)\big) \cong
\widetilde{H}^{\ast}\big(F_{t-1}\widehat{Z}\big(K;(\underline{X}, \underline{A})\big)\big),$$ which  is true when $F_{t-1}K$ is a simplex by 
\eqref{eqn:flitration} and Theorem \ref{eqn:sigmaiso}.

\skp{0.2}
{\footnotesize \hspace{-12mm}\begin{tikzcd}
\cdots \widetilde{H}^{\ast}(\mathcal{C}_{(\underline{U},\underline{V}}) 
\arrow[r, "\gamma^{(U,V)}_t"]  \arrow[d, "c_t"',"\cong"] 
 &\widetilde{H}^{\ast}\big(F_t\widehat{Z}\big(K;(\underline{U}, \underline{V})\big)\big) 
 \arrow[r, "\iota"]  \arrow[d, "g_{\sigma_t}"']
 & \widetilde{H}^{\ast}\big(F_{t-1}\widehat{Z}\big(K;(\underline{U}, \underline{V})\big)\big) 
\arrow[d, "g_{\partial{\sigma_t}}"]  \arrow[r, "\delta^{(U,V)}_t"]
& \widetilde{H}^{\ast+1}(\mathcal{C}_{(\underline{U},\underline{V}})  \arrow[d, "c_t"',"\cong"]  \cdots\\ 
\cdots \widetilde{H}^{\ast}\big(\Sigma{E_{\sigma_t}}\big) \oplus \widetilde{H}^{\ast}\big(C_{\sigma_{t}}\big)
 \arrow[r, "\gamma^{(U,V)}_{\sigma_t}"]
&\widetilde{H}^{\ast}\big(C_{\sigma_t}\big) \oplus \widetilde{H}^{\ast}\big(D_{\sigma_t}\big) \arrow[r,"\iota"] 
&\widetilde{H}^{\ast}\big(E_{\sigma_{t}}\big) \oplus \widetilde{H}^{\ast}\big(D_{\sigma_{t}}\big) \arrow[r, "\delta^{(U,V)}_{\sigma_t}"]
&\widetilde{H}^{\ast+1}\big(\Sigma{E_{\sigma_t}}\big) \oplus \widetilde{H}^{\ast}\big(C_{\sigma_{t}}\big) \cdots \\
\cdots \widetilde{H}^{\ast}(\mathcal{C}_{(\underline{X},\underline{A}}) 
\arrow[r, "\gamma^{(X,A)}_t"]  \arrow[u, "c_t","\cong"']
 &\widetilde{H}^{\ast}\big(F_t\widehat{Z}\big(K;(\underline{X}, \underline{A})\big)\big) 
 \arrow[r, "\iota"]  \arrow[u, "g_{\sigma_t}"]
 &\widetilde{H}^{\ast}\big(F_{t-1}\widehat{Z}\big(K;(\underline{X}, \underline{A})\big)\big)  
  \arrow[u, "g_{\partial{\sigma_t}}"']  \arrow[r, "\delta^{(X,A)}_t"] 
    &\widetilde{H}^{\ast+1}(\mathcal{C}_{(\underline{X},\underline{A}})   \arrow[u, "c_t","\cong"']\cdots
\end{tikzcd}}
\skp{0.2}

\nd The exactness and the commutativity of the diagram implies that we can choose isomorphisms as follows

\begin{enumerate}\itemsep3mm
\item[] $\widetilde{H}^{\ast}\big(F_t\widehat{Z}\big(K;(\underline{U}, \underline{V})\big)\big) \cong \widetilde{H}^{\ast}\big(C_{\sigma_t}\big) \oplus L$ \; for some $L$
\item[] $\widetilde{H}^{\ast}\big(F_{t-1}\widehat{Z}\big(K;(\underline{U}, \underline{V})\big)\big) \cong \widetilde{H}^{\ast+1}\big(\Sigma{E_{\sigma_t}}\big) \oplus L$
\item[] $\widetilde{H}^{\ast}\big(F_t\widehat{Z}\big(K;(\underline{X}, \underline{A})\big)\big)  \cong \widetilde{H}^{\ast}\big(C_{\sigma_t}\big) \oplus L'$ \; for some $L'$
\item[] $\widetilde{H}^{\ast}\big(F_{t-1}\widehat{Z}\big(K;(\underline{X}, \underline{A})\big)\big) \cong \widetilde{H}^{\ast+1}\big(\Sigma{E_{\sigma_t}}\big) \oplus L'$
\end{enumerate}
\skp{0.1}
\nd The inductive hypothesis  implies now that $L \cong L'$ and so
$$\widetilde{H}^{\ast}\big(F_t\widehat{Z}\big(K;(\underline{U}, \underline{V})\big)\big) \cong 
\widetilde{H}^{\ast}\big(F_t\widehat{Z}\big(K;(\underline{X}, \underline{A})\big)\big)$$ 
as required. This, together with the fact that result is true for a simplex and its boundary, (Section \ref{sec:boundary}),
completes the proof.

\section{\hspace{0.3mm} The Hilbert-Poincar\'e series for $Z(K;(X,A))$}\label{sec:hpseries}
We begin by reviewing some of the elementary properties of Hilbert-Poincar\'e series. Assume now that homology is taken with
coefficients in a field $k$ and all spaces are pointed, path conected with the homotopy type of CW-complexes. The Hilbert-Poincar\'e series
$$P(X,t) = \sum_{n}\big({\rm dim}_{k}H_{n}(X;k)\big)t^n$$
\nd and the reduced Hilbert-Poincar\'e series
$$ \overline{P}(X,t) = -1 + P(X,t)$$
\nd satisfy the following properties.
\begin{enumerate}
\item $P(X,t)P(Y,t) = P(X\times Y),t)$, and
\item  $\overline{P}(X,t) \overline{P}(Y,t)  = P(X\wedge Y,t)$.
\end{enumerate}
For a pair $(X,A)$ satisfying the conditions of Theorem \ref{thm:cartan2}, we have 
\begin{equation}
\overline{P}\big(\widehat{Z}(K;(\underline{X},\underline{A})),t\big) = \overline{P}\big(\widehat{Z}(K;(\underline{U},\underline{V})),t\big)
\end{equation}
where the pair $(U,V)$ is as in Definition \ref{defn:strongfc}. Next, Theorem \ref{thm:main} gives
\begin{equation}
\overline{P}\big(\widehat{Z}(K;(\underline{U},\underline{V})),t\big) =
\sum_{I\leq [m]} \Big[\overline{P}\big(\widehat{Z}(K_I;(\underline{C},\underline{E})_I),t\big)\big) 
\cdot\lprod_{j\in [m]-I}\overline{P}(B_{i},t)\Big]
\end{equation}
We apply now Corollary \ref{cor:wedge} to refine this further and obtain the next theorem.
\begin{thm}
The reduced Hilbert-Poincar\'e series for $\widehat{Z}(K;(\underline{U},\underline{V}))$, and hence for
$\widehat{Z}(K;(\underline{X},\underline{A}))$ is given as follows,
\begin{equation*}\label{eqn:hpszhat}
\overline{P}\big(\widehat{Z}(K;(\underline{U},\underline{V})),t\big) =
\sum_{I\leq [m]}\Big[\sum_{\sigma \in K_I}\big[(t)\overline{P}(|\text{lk}_{\sigma}(K_{I})|,t)\cdot
\overline{P}\big(\widehat{D}_{\underline{C},\underline{E}}^{I}(\sigma)\big)\big]\cdot\lprod_{j\in [m]-I}\overline{P}(B_{i},t)\Big]
\end{equation*}
\nd where $\overline{P}\big(\widehat{D}_{\underline{C},\underline{E}}^{I}(\sigma)\big)$ 
can be read off from \eqref{eqn:ced.sigma}. 
\end{thm}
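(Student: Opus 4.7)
The plan is to assemble the formula by composing the two decomposition results already established, and then translating the resulting wedge/join/smash structure into the corresponding arithmetic of reduced Hilbert--Poincar\'e series over a field $k$.

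First, I would take as the starting point the displayed equation immediately preceding the theorem statement, namely
\[
\overline{P}\big(\widehat{Z}(K;(\underline{U},\underline{V})),t\big) =
\sum_{I\leq [m]} \overline{P}\big(\widehat{Z}(K_I;(\underline{C},\underline{E})_I),t\big) \cdot \lprod_{j\in [m]-I}\overline{P}(B_{i},t),
\]
which follows by applying the multiplicativity property (2) of $\overline{P}$ to the smash-product Cartan decomposition of Theorem \ref{thm:main}, together with additivity of $\overline{P}$ over wedges. The remaining task is therefore to expand the factor $\overline{P}\big(\widehat{Z}(K_I;(\underline{C},\underline{E})_I),t\big)$ for each $I$.

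Next, since $(\underline{C},\underline{E})_I$ satisfies the hypothesis that $E_i \hookrightarrow C_i$ is null homotopic, I would apply the Wedge Lemma (Theorem \ref{thm:wlemma}), exactly as was done in Corollary \ref{cor:wedge}, to obtain the homotopy equivalence
\[
\widehat{Z}\big(K_I;(\underline{C},\underline{E})_I\big) \simeq
\mvee_{\sigma \in K_I} |\text{lk}_{\sigma}(K_I)| \ast \widehat{D}_{\underline{C},\underline{E}}^{I}(\sigma).
\]
Taking $\overline{P}$ of both sides, additivity of the reduced Poincar\'e series on wedges of pointed CW complexes yields a sum over $\sigma \in K_I$ of the terms $\overline{P}\big(|\text{lk}_\sigma(K_I)| \ast \widehat{D}_{\underline{C},\underline{E}}^I(\sigma),\,t\big)$.

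To handle each joined term, I would use the standard homotopy equivalence $X \ast Y \simeq \Sigma(X \wedge Y)$ for pointed CW complexes; combined with property (2), this gives
\[
\overline{P}\big(|\text{lk}_\sigma(K_I)| \ast \widehat{D}_{\underline{C},\underline{E}}^I(\sigma),\,t\big)
\;=\; t \cdot \overline{P}\big(|\text{lk}_\sigma(K_I)|,\,t\big) \cdot \overline{P}\big(\widehat{D}_{\underline{C},\underline{E}}^I(\sigma),\,t\big),
\]
where the factor of $t$ comes from the suspension shift in degree. Substituting this back into the sum over $\sigma$ and $I$ produces precisely the formula claimed, and the first equality of the theorem then follows from Theorem \ref{thm:cartan2}, which gives that the ranks of the cohomology groups of $\widehat{Z}(K;(\underline{X},\underline{A}))$ and $\widehat{Z}(K;(\underline{U},\underline{V}))$ agree degree-by-degree.

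There is no substantial obstacle here; the main point requiring care is ensuring we work with the \emph{reduced} Poincar\'e series throughout so that both the smash and join identities hold as stated, and checking that the base-point conditions on the pairs $(U_i,V_i)$ ensure the spaces appearing in the Wedge Lemma are genuinely pointed CW complexes so that $X\ast Y\simeq \Sigma(X\wedge Y)$ applies. Once these bookkeeping points are verified, the proof is a straightforward composition of the results of Sections \ref{sec:wdpairs} and \ref{sec:cohomwdl} with the multiplicative/additive properties of $\overline{P}$.
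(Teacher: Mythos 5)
Your proposal is correct and follows essentially the same route as the paper: start from the Cartan decomposition of Theorem \ref{thm:main} (the displayed equation preceding the theorem), refine each factor $\overline{P}\big(\widehat{Z}(K_I;(\underline{C},\underline{E})_I),t\big)$ via the Wedge Lemma as in Corollary \ref{cor:wedge}, and convert the join into a suspension of a smash to produce the factor of $t$. The transfer to $(\underline{X},\underline{A})$ via Theorem \ref{thm:cartan2} is also exactly the paper's argument.
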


Finally, Theorem \ref{thm:bbcgsplitting} gives now the  
Hilbert-Poincar\'e series for $Z(L;(U,V))$ and for $Z(L;(X,A)$, by applying
\eqref{eqn:hpszhat} for each $K = L_J, \; J \subseteq [m]$. 

\section{\hspace{0.8mm}Applications}\label{sec:applications}
\begin{exm}\label{exm:cp4additive}
Consider the composite
 \begin{equation}\label{eqn:cp2cp3}
 f\colon \mathbb{C}P^3 \rightarrow \mathbb{C}P^{3}/\mathbb{C}P^1 
 \xhookrightarrow{\iota} \mathbb{C}P^{8}/\mathbb{C}P^1
 \end{equation}
 where the map $\iota$ is the inclusion of the bottom two cells.
 \nd Denote the mapping cylinder of \eqref{eqn:cp2cp3} by $M_f$ and consider the pair
$(M_f, \mathbb{C}P^3)$ for which we shall determine
$\widetilde{H}^{*}\big(\widehat{Z}\big(K;(M_f, \mathbb{C}P^3)\big)\big)$, and hence, 
$H^{\ast}\big(Z(K;(M_f, \mathbb{C}P^3)\big)\big)$ for any
simplicial complex $K$ on vertices $[m]$. Here,
$$(U,V) = \Big(\mvee_{k=2}^{3}S^{2k} \vee  \mvee_{k=4}^{8}S^{2k},\; 
\;\mvee_{k=2}^{3}S^{2k} \vee S^{2}\Big).$$
so that  $B = \mvee_{k=2}^{3}S^{2k}$, $C= \mvee_{k=4}^{8}S^{2k}$ and $E = S^{2}$. 
Theorem \ref{thm:cartan2} gives now
$$\widetilde{H}^{*}\big(\widehat{Z}\big(K;(M_f, \mathbb{C}P^3)\big)\big) \cong 
\widetilde{H}^{*}\big(\widehat{Z}(K;(\underline{U},\underline{V}))\big).$$
Applying Theorem \ref{thm:main}, we get
\begin{align*}\widehat{Z}(K;(U,V)) &\xrightarrow{\simeq} 
\mvee_{I\leq [m]}\widehat{Z}\big(K_{I};\big(\mvee_{k=4}^{8}S^{2k}, \;S^{2}\big)\big)\wedge 
\widehat{Z}\big(K_{[m]- I};(\mvee_{k=2}^{3}S^{2k},\;\mvee_{k=2}^{3}S^{2k}\big)\big)\\
&= \mvee_{I\leq [m]}\widehat{Z}\big(K_{I};\big(\mvee_{k=4}^{8}S^{2k},\;S^{2}\big)\big)
\wedge (\mvee_{k=2}^{3}S^{2k}\big)^{{\wedge}|[m]-I|}
\end{align*}
\nd where the last term represents the $(|[m]-I)|$-fold smash product.
Finally, Corollary \ref{cor:wedge} determines  completely each term
$$\widehat{Z}\big(K_{I};\big(\mvee_{k=4}^{8}S^{2k}, \;S^{2}\big)\big)$$
by enumerating all  the links $|lk_{\sigma}(K_I)|$.
\end{exm}
Theorem \ref{thm:main} applies particularly well in cases where spaces have unstable attaching maps.
\begin{exm}
The homotopy equivalence $S^{1}\wedge Y \simeq \Sigma(Y)$ implies homotopy equivalences
\begin{equation}\label{eqn:he1}
\Sigma^{mq}\big(\widehat{Z}(K;(\underline{X},\underline{A}))\big)
\longrightarrow \widehat{Z}\big(K;\big(\underline{\Sigma^{q}(X)},\underline{\Sigma^{q}(A)}\big)\big)
\end{equation}
\nd where as usual,  $m$  is the number of vertices of $K$. Recall now that 
$SO(3) \cong \mathbb{R}\rm{P}^{3}$ 
and consider the pair
$$(X,A) = \big(SO(3), \mathbb{R}\rm{P}^{2}\big),$$
\nd for which there is a well known homotopy equivalence of pairs,  \cite[Section 1]{mukai}, 
\begin{equation}\label{eqn:he2}
\big(\Sigma^{2}\big(SO(3)\big), \;\Sigma^{2}\big(\mathbb{R}\rm{P}^{2}\big)\big) \longrightarrow 
\big(\Sigma^{2}\big(\mathbb{R}\rm{P}^{2}\big)\vee \Sigma^{2}(S^{3}),\;
\Sigma^{2}\big(\mathbb{R}\rm{P}^{2}\big)\big),
\end{equation}
\nd which makes the pair $\big(SO(3), \mathbb{R}\rm{P}^{2}\big)$ {\em stably wedge decomposable\/}.
\nd Next, combining \eqref{eqn:he1} and \eqref{eqn:he2}, we get a homotopy equivalence
$$\Sigma^{2m}\big(\widehat{Z}(K;(SO(3),\mathbb{R}\rm{P}^{2}))\big)
\longrightarrow 
\widehat{Z}\big(K;\big(\Sigma^{2}(\mathbb{R}\rm{P}^{2})\vee \Sigma^{2}(S^{3}),\;
\Sigma^{2}(\mathbb{R}\rm{P}^{2})\big).$$
\nd Finally, Theorem \ref{thm:cartan2} allows us to conclude that $\widehat{Z}(K;(SO(3),\mathbb{R}\rm{P}^{2}))\big)$,
and hence the polyhedral product $Z(K;(SO(3),\mathbb{R}\rm{P}^{2}))$,
is stably a wedge of smash products of $S^{3}$ and $\mathbb{R}\rm{P}^{2}$.
\end{exm}
Similar splitting results exist for
the polyhedral product whenever the spaces $X$ and $A$ split after finitely many suspensions. In particular, the fact that
$\Omega^{2}S^{3}$ splits stably into a wedge of Brown--Gitler spectra implies that the polyhedral product 
$Z\big(K; (\Omega^{2}S^{3}, \ast)\big)$ splits  stably  into a wedge of smash products of Brown--Gitler spectra.

\section{\hspace{0.8mm}Product structure}\label{sec:products}
The purpose of this section is to describe the product structure of 
$\widetilde{H}^{*}\big(Z(K;(\underline{X},\underline{A}))\big)$
in terms of the names of the additive generators given by Theorem \ref{thm:cartan2}, Corollary \ref{cor:wedge}  and
\eqref{eqn:hochster} below. We continue to work under the strong freeness conditions of Definition 
\ref{defn:strongfc}, which are satisfied, in particular, if we take coefficients in a field $k$.

\subsection{{Background}}\label{subsec:back}
We begin with a brief summary of the properties of {\em partial diagonal\/}  maps from \cite{bbcg3} and \cite{bbcg10}.
The main theorem of \cite{bbcg3} asserts that the product structure on the algebra 
${H}^\ast\big(Z(K;(\underline{X},\underline{A}))\big)$, where $K$ is a simplicial complex on $[m]$, is determined 
completely by partial diagonals defined for  $P,Q$ subsets of $[m]$, (\cite[Section 2]{bbcg3}),
\begin{equation}\label{eqn:partialdiags}
\widehat{\Delta}_{P \cup Q}^{P,Q}\;\colon \widehat{Z}\big(K_{P\cup Q};(\underline{X},\underline{A})_{P\cup Q} \big) 
\longrightarrow \widehat{Z}\big(K_{P};(\underline{X},\underline{A})_{P} \big) \wedge 
\widehat{Z}\big(K_{Q};(\underline{X},\underline{A})_{ Q} \big)
\end{equation}
\nd  inducing
$$\widehat{H}^\ast\big(\widehat{Z}\big(K_{P};(\underline{X},\underline{A})_{P} \big)\big) \otimes
\widehat{H}^\ast\big(\widehat{Z}\big(K_{Q};(\underline{X},\underline{A})_{ Q} \big) \longrightarrow
\widehat{H}^\ast\big(\widehat{Z}\big(K_{P\cup Q};(\underline{X},\underline{A})_{P\cup Q} \big)\big).$$
A sketch of the ideas from \cite[Section 6]{bbcg10} follows next. A family of pairs 
$(\underline{X},\underline{A})^{P,Q}_{P\cup Q}$ \; is defined by
 \begin{equation}\label{eqn:doublingpairs}
\big[(\underline{X},\underline{A})^{P,Q}_{P\cup Q}\big]_i = \begin{cases}
(X_i,A_i) \quad& {\rm if}\; i \in (P\cup Q) \msetm (P \cap Q)\\
(X_i\wedge X_i, A_i \wedge A_i) & {\rm if}\; i \in P\cap Q. \end{cases} 
\end{equation}

 \nd The map $\widehat{\Delta}_{P \cup Q}^{P,Q}$,  factors as
 \begin{equation}\label{eqn:composite1}
 \widehat{Z}\big(K_{P\cup Q};(\underline{X},\underline{A})_{P\cup Q} \big)  
 \xrightarrow{\widehat{\psi}^{P,Q}_{P\cup Q}} 
 \widehat{Z}\big(K_{P\cup Q};(\underline{X},\underline{A})^{P,Q}_{P\cup Q}\big)
 \end{equation}
 \nd followed by
 \begin{equation}\label{eqn:composite2}
 \widehat{Z}\big(K_{P\cup Q};(\underline{X},\underline{A})^{P,Q}_{P\cup Q}\big)    
 \xrightarrow{\widehat{S}}  \widehat{Z}\big(K_{P};(\underline{X},\underline{A})_{P} \big) \wedge 
\widehat{Z}\big(K_{Q};(\underline{X},\underline{A})_{ Q} \big)
 \end{equation}
\nd where $\widehat{S}$ is a shuffle map,  originating from a natural rearrangement of smash product factors at the 
diagram level, \cite[Section 7]{bbcg3},  and \;
$\widehat{\psi}^{P,Q}_{P\cup Q} \colon (\underline{X},\underline{A}) \longrightarrow 
(\underline{X},\underline{A})^{P,Q}_{P\cup Q}$\; is induced by the map of pairs
\begin{equation}\label{eqn:diag}
(X_i,A_i) \longmapsto \begin{cases} (X_i,A_i) 
&\text{if}\; i \in  (P\cup Q) \msetm \;(P \cap Q)\\
(X_i\wedge X_i,A_i\wedge A_i) &\text{if}\; i \in P\cap Q. \end{cases}
\end{equation}
The connection between the partial diagonals and the cup product in 
${H}^\ast\big(Z(K;(\underline{X},\underline{A}))\big)$ is given by the next commutative 
diagram of diagonals and projections, \cite[Section 1]{bbcg3}.
\begin{equation}\label{eqn:starequalscup}
\hspace{0.0cm}\begin{tikzcd}
Z(K;(\underline{X},\underline{A}))\arrow[r, "\Delta_{Z(K;(\underline{X},\underline{A}))}"]
\arrow[d, "\widehat{\Pi}_{P\cup Q}"]
&Z(K;(\underline{X},\underline{A})) \wedge Z(K;(\underline{X},\underline{A}))
\arrow[d, "\widehat{\Pi}_P \;\wedge\; \widehat{\Pi}_Q"]\\ 
\widehat{Z}\big(K_{P\cup Q};(\underline{X},\underline{A})_{P\cup Q}\big)
\arrow[r, "\widehat{\Delta}_{P \cup Q}^{P,Q}"]
&\widehat{Z}\big(K_{P};(\underline{X},\underline{A})_{P} \big) \wedge
\widehat{Z}\big(K_{Q};(\underline{X},\underline{A})_{Q} \big)
\end{tikzcd}
\end{equation}
\skp{0.1}
\nd The projection maps $\widehat{\Pi}_I$ are induced from the composition of the two projections:
\begin{equation}\label{eqn:twoprojections}
Y^{[m]} \longrightarrow Y^I \longrightarrow \widehat{Y}^I.
\end{equation}
This leads to the definition of the $\ast$-product.
\skp{0.1}
\begin{defin}\label{defn: starproduct}
\nd Given cohomology classes 
$$u \in \widetilde{H}^{p}\big(\widehat{Z}\big(K_{P};(\underline{X},\underline{A})_{P} \big)\big)\;\;\text{and}\;\;
v \in \widetilde{H}^{q}\big(\widehat{Z}\big(K_{Q};(\underline{X},\underline{A})_{Q} \big)\big),$$ 
the star product $\ast$-product is defined by
$$u\ast v = (\widehat{\Delta}_{P \cup Q}^{P,Q})^\ast(u\otimes v) \in 
\widetilde{H}^{p+q}\big(\widehat{Z}\big(K_{P\cup Q};(\underline{X},\underline{A})_{P\cup Q}\big)\big).$$
\end{defin}

\nd The commutativity of diagram \eqref{eqn:starequalscup} implies that
\begin{equation}\label{eqn:startocup}
(\widehat{\Pi}_{P\cup Q})^{\ast}(u\ast v) = (\widehat{\Pi}_{P})^{\ast}(u)
\smallsmile (\widehat{\Pi}_{Q})^{\ast}(v)
\end{equation}
where $\smallsmile$ denotes the cup product in $\widetilde{H}^\ast\big(Z(K;(\underline{X},\underline{A}))\big)$.
Finally, the $\ast$-product endows  
{\small $$ \moplus_{I\subseteq [m]}
\widetilde{H}^{\ast}\big(\widehat{Z}\big(K_{I};(\underline{X},\underline{A})_{I}\big)\big)$$}
with a ring structure, and there is an isomorphism of rings
\begin{equation}\label{eqn:hochster}
\eta\; \colon  \moplus_{I\subseteq [m]}
\widetilde{H}^{\ast}\big(\widehat{Z}\big(K_{I};(\underline{X},\underline{A})_{I}\big)\big) \longrightarrow 
\widetilde{H}^\ast\big(Z(K;(\underline{X},\underline{A}))\big) 
\end{equation}
induced from the additive isomorphism of Theorem \ref{thm:bbcgsplitting}, as described in 
\cite[Section 1]{bbcg3}.
\skp{0.3}

\subsection{{Partial diagonals, the wedge lemma and the product of links}}\label{subsec:linkprods}
We discuss now the partial diagonal maps given by \eqref{eqn:partialdiags} in the context of 
Theorem \ref{thm:wlemma}, which asserts that if every $E_i \hookrightarrow C_i$ 
is null homotopic then for any $I \subseteq [m]$,
\begin{equation}\label{eqn:wedgedecompce}
\widehat{Z}\big(K_{I};(\underline{C},\underline{E})_I\big) \simeq 
\mvee_{\sigma \in K_{I}}|\Delta((K_{I})_{<\sigma})|\ast
\widehat{D}_{\underline{C},\underline{E}}^{I}(\sigma) \simeq
\mvee_{\sigma \in K_{I}} |lk_{\sigma}(K_{I})|\ast
\widehat{D}_{\underline{C},\underline{E}}^{I}(\sigma)
\end{equation}
\nd where $\widehat{D}_{\underline{C},\underline{E}}^{I}(\sigma)$ is as in \eqref{eqn:ced.sigma}.
The next lemma checks that the partial diagonal maps behave as expected on the summands given by the wedge
lemma.

\begin{lem}\label{lem:diagsbehave}
The proof of the wedge lemma, \cite[Section 4]{zz}, exhibits vertical embeddings of 
homotopy colimts so that the following diagram commutes for $I = J\cup L$, 
$\tau = \sigma \cap J$ and $\omega = \sigma \cap L$,
{\small \begin{equation}\label{eqn:cediag}
\begin{tikzcd}
\widehat{Z}\big(K_{I};(\underline{C},\underline{E})_{I}\big)
\arrow[r, "\widehat{\Delta}_{I}^{J,L}"]
&\widehat{Z}\big(K_{J};(\underline{C},\underline{E})_{J} \big) \wedge
\widehat{Z}\big(K_{L};(\underline{C},\underline{E})_{L} \big)\\ 
\mid\hspace{-1mm}{lk}_{\sigma}(K_{I})\hspace{-1.2mm}\mid\ast
\widehat{D}_{\underline{C},\underline{E}}^{I}(\sigma)
\arrow[r, "\widehat{\xi}_{I}^{J,L}"]\arrow[u, "\overline{\gamma(\sigma)}"]
&\mid\hspace{-1mm}{lk}_{\tau}(K_{J})\hspace{-1.2mm}\mid\ast
\widehat{D}_{\underline{C},\underline{E}}^{J}(\tau) \;\wedge\;
\mid\hspace{-1mm}{lk}_{\omega}(K_{L})\hspace{-1.2mm}\mid\ast
\widehat{D}_{\underline{C},\underline{E}}^{L}(\omega)
\arrow[u, "\overline{\gamma(\tau)}\;\wedge\;\overline{\gamma(\omega)}"']
\end{tikzcd}
\end{equation}}
\end{lem}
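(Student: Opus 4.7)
The plan is to verify commutativity at the diagram level, before passing to homotopy colimits, and then invoke the naturality of the wedge lemma construction in \cite[Section 4]{zz} to conclude. The partial diagonal $\widehat{\Delta}_{I}^{J,L}$ factors through the doubling map $\widehat{\psi}^{J,L}_{I}$ of \eqref{eqn:composite1} followed by the shuffle $\widehat{S}$ of \eqref{eqn:composite2}. Both of these are defined at the diagram level: the doubling is induced by the map of CW pairs \eqref{eqn:diag}, and the shuffle comes from a rearrangement of smash factors. In particular, both refinements respect the $K$-indexed diagram structure that gives rise to $\widehat{Z}\big(K_{I};(\underline{C},\underline{E})_{I}\big)$.

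First, I would record the following set-theoretic observation: if $\sigma \in K_{I}$ and $I = J \cup L$, then writing $\tau = \sigma \cap J$ and $\omega = \sigma \cap L$, one has $\sigma = \tau \cup \omega$ (with overlap on $J\cap L$), and the smash product $\widehat{D}_{\underline{C},\underline{E}}^{I}(\sigma)$, after the doubling on indices in $J\cap L$ and a permutation of smash factors, is literally identified with $\widehat{D}_{\underline{C},\underline{E}}^{J}(\tau) \wedge \widehat{D}_{\underline{C},\underline{E}}^{L}(\omega)$. This is a direct bookkeeping argument: an index $i \in \sigma \cap (J \cap L)$ yields a $C_{i}$ factor on each side (paired, after shuffling, with a copy of $C_{i}$ in each of the two targets), while $i \in \sigma \msetm (J \cap L)$, $i \in J \msetm \sigma$, and $i \in L \msetm \sigma$ each contribute precisely one factor on each side. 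This handles the $\widehat{D}_{\underline{C},\underline{E}}^{I}(\sigma)$ tensor in \eqref{eqn:cediag}.

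Next, for the link factor, I would exhibit a natural simplicial map $\text{lk}_{\sigma}(K_{I}) \longrightarrow \text{lk}_{\tau}(K_{J}) \ast \text{lk}_{\omega}(K_{L})$ sending a simplex $\rho \in \text{lk}_{\sigma}(K_{I})$ (so $\rho \cup \sigma \in K_{I}$) to the join $(\rho \cap J) \cup (\rho \cap L)$; the fact that $(\rho \cap J) \cup \tau \in K_{J}$ and $(\rho \cap L) \cup \omega \in K_{L}$ follows because full subcomplexes are closed under taking subsets of faces of $K$. Passing to geometric realizations and applying the standard identification $|X \ast Y| \cong |X| \ast |Y|$, this gives the link component of the vertical map on the right of \eqref{eqn:cediag}. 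Combining with the smash-factor identification of the previous paragraph yields the map $\widehat{\xi}_{I}^{J,L}$ on the $\sigma$-summand.

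Finally, to verify commutativity of \eqref{eqn:cediag}, I would unpack the construction in \cite[Section 4]{zz}: the embedding $\overline{\gamma(\sigma)}$ arises from viewing the $\sigma$-summand as the image of the homotopy colimit cell indexed by $\sigma$ in the diagram defining $\widehat{Z}\big(K_{I};(\underline{C},\underline{E})_{I}\big)$. Since $\widehat{\psi}^{J,L}_{I}$ and $\widehat{S}$ are diagram-level maps, they act compatibly on each such cell, sending the cell indexed by $\sigma$ to the product of the cells indexed by $\tau$ and $\omega$ in the target diagrams; the commutativity is then a cell-by-cell identification. The main obstacle is verifying the bookkeeping across the $J \cap L$ overlap, where doubling and shuffling must interact correctly with the join decomposition of the link, but once the bijection on vertices in $\sigma \cap (J \cap L)$ versus $\sigma \msetm (J \cap L)$ is laid out explicitly, the diagram-level commutativity is forced and the lemma follows.
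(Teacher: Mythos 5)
Your proposal is correct and follows essentially the same route as the paper: the paper's entire proof is the one-line observation that $\widehat{\xi}_{I}^{J,L}$ is \emph{defined} as the restriction of $\widehat{\Delta}_{I}^{J,L}$ to the $\sigma$-summand, so the commutativity holds by construction. Your diagram-level bookkeeping of the doubled and shuffled smash factors, and of the induced simplicial map on links, is precisely the verification (omitted in the paper) that this restriction lands in the $(\tau,\omega)$-summand of the target.
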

\begin{proof}
The map $\widehat{\xi}_{I}^{J,L}$ is the restriction of $\widehat{\Delta}_{I}^{J,L}$.  
\end{proof}
\nd In particular, \eqref{eqn:cediag} identifies the target under the partial diagonal $\widehat{\Delta}_{I}^{J,L}$, for every
wedge summand of $\widehat{Z}\big(K_{I};(\underline{C},\underline{E})_{I}\big)$.
In order to better understand the map $\widehat{\xi}_{I}^{J,L}$ from \eqref{eqn:cediag} and its effect on the 
cohomology of the links, we need to consider a polyhedral product which contains all the links.
\skp{0.1}
 Consider next the space $\mz = D^1 \vee S^0$ and let 
\begin{equation}\label{eqn:s0}
\iota\colon S^0 \hookrightarrow D^1 \hookrightarrow \mz
\end{equation}
be the basepoint preserving inclusion which takes $S^0$ to the endpoints of $D^1$; this gives a pair $(\mz, S^0)$. 
Next, let $\mathcal{K}$ be a simplicial complex on $[m]$ and apply Theorem \ref{thm:wlemma} to the polyhedral 
smash product $\widehat{Z}(\mathcal{K};(\mz, S^0))$ to get 
\begin{equation}\label{eqn:sumoflinks}
\widehat{Z}(\mathcal{K};(\mz, S^0)) \;\simeq\; \mvee_{\sigma \in \mathcal{K}} |lk_{\sigma}(\mathcal{K})|\ast
\widehat{D}_{\umz,\; \underline{S}^0}^{[m]}(\sigma) \;\simeq\; 
\mvee_{\sigma \in \mathcal{K}}\Sigma{|lk_{\sigma}(\mathcal{K})|},
\end{equation} 
since  $\widehat{D}_{\umz,\; \underline{S}^0}^{[m]}(\sigma) \simeq S^0$. 
For each $\sigma \in \mathcal{K}$, the inclusion of  
each wedge summand given by Theorem \ref{thm:wlemma} can be described as follows
{\fontsize{11}{8}\selectfont
\begin{equation}\label{eqn:szero}
\mid\hspace{-1mm}{lk}_{\sigma}(\mathcal{K})\hspace{-1.2mm}\mid\ast
\widehat{D}_{\underline{C},\underline{E}}^{[m]}(\sigma) =
\Sigma{\mid\hspace{-1mm}{lk}_{\sigma}(\mathcal{K})\hspace{-1.2mm}\mid}\wedge
\widehat{D}_{\underline{C},\underline{E}}^{[m]}(\sigma) \xhookrightarrow{\iota}
\widehat{Z}\big(\mathcal{K};(\mz,S^0)\big) \wedge \widehat{D}_{\underline{C},\underline{E}}^{[m]}(\sigma).
\end{equation}}

Combining  diagram \eqref{eqn:starequalscup} with \eqref{eqn:sumoflinks}, we get the
 commutative diagram below.
{\fontsize{11}{8}\selectfont \begin{equation}\label{eqn:szerolinks}
\hspace{-0.1in}\begin{tikzcd}[column sep=7.5ex, row sep=3ex]
Z\big(\mathcal{K};(\mz,S^0)\big) \arrow[r, "\Delta_{Z(\mathcal{K};(\mz,S^0))}"]
\arrow[d, "\;\widehat{\Pi}_{I}"]
&Z\big(\mathcal{K};(\mz,S^0)\big) \wedge Z\big(\mathcal{K};(\mz,S^0)\big)
\arrow[d, "\;\widehat{\Pi}_J \;\wedge\; \widehat{\Pi}_L"]\\ 
\widehat{Z}\big(\mathcal{K}_I;(\mz,S^0)_{I}\big) 
\arrow[r, "\widehat{\Delta}_{I}^{J,L}"] \arrow[d, "\;\pi_{\sigma}"]
&\widehat{Z}\big(\mathcal{K}_J;(\mz,S^0)_{J}\big) \wedge
\widehat{Z}\big(\mathcal{K}_L;(\mz,S^0)_{L}\big)
\arrow[d, "\;\pi_\tau\;\wedge\;\pi_\omega"]\\
\Sigma{\mid\hspace{-1mm}{lk}_{\sigma}(\mathcal{K}_{I})\hspace{-1.2mm}\mid}
\arrow[r, "\widehat{\xi}_{I}^{J,L}"]
&\Sigma{\mid\hspace{-1mm}{lk}_{\tau}(\mathcal{K}_{J})\hspace{-1.2mm}\mid} \;\wedge\;
\Sigma{\mid\hspace{-1mm}{lk}_{\omega}(\mathcal{K}_{L})\hspace{-1.2mm}\mid}
\end{tikzcd}
\end{equation}}
\nd where the vertical maps at the bottom are projections onto wedge summands. This motivates the next definition.

\begin{defin}
Let $\alpha \in \widetilde{H}^{\ast}(|{lk}_{\tau}(\mathcal{K}_{J})|)$ and
$\beta \in \widetilde{H}^{\ast}(|{lk}_{\omega}(\mathcal{K}_{L})|)$,
we write
$$\alpha \ast  \beta \;\becomes 
(\widehat{\xi}_{I}^{J,L})^\ast(\alpha_{\tau,J} \otimes \alpha_{\omega,L} )$$
\end{defin}
We need more information about $\widetilde{H}^{\ast}\big(Z(K;(\mz,S^0))\big)$ before undertaking an example.
\begin{lem}
We have the following isomorphism of cohomology groups for the simplicial complex $K$ consisting of the empty 
simplex and $m$ discrete points
$$\widetilde{H}^{\ast}\big(Z(K;(\mz,S^0))\big) \xrightarrow{\cong}
\moplus_{I\subset [m]}\Big(\widetilde{H}^{\ast}\big(\mvee_{|I|-1}S^1\big)
\nonumber\moplus_{\{i\} \in K_I}\widetilde{H}^{\ast}\big(\Sigma{\varnothing}\big)\Big)\\ \nonumber$$
\end{lem}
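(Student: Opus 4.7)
The plan is to combine the stable splitting of Theorem \ref{thm:bbcgsplitting} with the Wedge Lemma (Theorem \ref{thm:wlemma}) applied to the especially simple pair $(\mz, S^0)$. First, Theorem \ref{thm:bbcgsplitting} induces a natural additive decomposition
$$\widetilde{H}^{\ast}\big(Z(K;(\mz,S^0))\big) \;\cong\;
\moplus_{J\subseteq [m]} \widetilde{H}^{\ast}\big(\widehat{Z}(K_J;(\mz,S^0)_J)\big),$$
so it suffices to identify each summand on the right. For each $J\subseteq [m]$, the inclusion $S^0 \hookrightarrow \mz$ from \eqref{eqn:s0} is null homotopic, so Theorem \ref{thm:wlemma} applies and yields
$$\widehat{Z}(K_J;(\mz,S^0)_J) \;\simeq\; \mvee_{\sigma \in K_J} |\text{lk}_\sigma(K_J)| \ast \widehat{D}^J_{\mz,S^0}(\sigma).$$

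Next I identify the smash factors. Each $W_{i_j}$ in the definition \eqref{eqn:ced.sigma} of $\widehat{D}^J_{\mz,S^0}(\sigma)$ is either $\mz \simeq S^0$ or $S^0$, so the entire smash product collapses to $\widehat{D}^J_{\mz,S^0}(\sigma) \simeq S^0$. Each wedge summand therefore simplifies to $|\text{lk}_\sigma(K_J)| \ast S^0 \simeq \Sigma|\text{lk}_\sigma(K_J)|$, which is exactly the equivalence already recorded in \eqref{eqn:sumoflinks}.

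It remains to compute the links for the particular $K$ under consideration. Because $K$ consists of the empty simplex together with $m$ discrete vertices, the full subcomplex $K_J$ consists of the empty simplex together with the $|J|$ vertices indexed by $J$. Hence $\text{lk}_\varnothing(K_J) = K_J$ is a set of $|J|$ discrete points, whose suspension is homotopy equivalent to the wedge $\mvee_{|J|-1}S^1$, while for each vertex $\{i\} \in K_J$ no simplex properly contains $\{i\}$, so $\text{lk}_{\{i\}}(K_J) = \varnothing$ and the corresponding summand is $\Sigma\varnothing \simeq S^0$. Substituting these identifications into the wedge decomposition above and passing to reduced cohomology yields the claimed isomorphism. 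The only mild bookkeeping issue (hardly an obstacle) is the degenerate cases $|J|=0,1$, where the wedge $\mvee_{|J|-1}S^1$ is empty respectively a point; under the standard convention these contribute trivially to reduced cohomology, consistent with the stated formula.
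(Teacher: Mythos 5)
Your proposal is correct and follows essentially the same route as the paper: both apply Theorem \ref{thm:bbcgsplitting} to reduce to the full subcomplexes, then the Wedge Lemma (Theorem \ref{thm:wlemma}) with $\widehat{D}_{\umz,\underline{S}^0}^{I}(\sigma)\simeq S^0$, and finally identify $\text{lk}_{\varnothing}(K_I)=K_I$ and $\text{lk}_{\{i\}}(K_I)=\varnothing$ to obtain the stated summands. Your extra remark on the degenerate cases $|I|=0,1$ is harmless and consistent with the paper's conventions.
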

\begin{proof}
Here  $K = \big\{\varnothing, \{1\}, \{2\}, \ldots, \{m\}\big\}$.
Theorems  \ref{thm:bbcgsplitting} and \ref{thm:wlemma} give
\begin{align}\label{eqn:homszero}
\nonumber\widetilde{H}^{\ast}\big(Z(K;(\mz,S^0))\big) 
\nonumber&\cong \moplus_{I\subset [m]} \widetilde{H}^{\ast}\big(\widetilde{Z}(K_I;(\mz, S^0)_I)\big)\\
\nonumber&\cong \moplus_{I\subset [m]}\Big(\moplus_{\sigma \in K_I}\widetilde{H}^{\ast}\big(|lk_{\sigma}(K_{I})|\ast
\nonumber\widetilde{D}_{\mz,S^0}^{I}(\sigma)\big) \Big)\\
&\cong \moplus_{I\subset [m]}\Big(\moplus_{\sigma \in K_I}\widetilde{H}^{\ast}\big(\Sigma|lk_{\sigma}(K_{I})|\big)\Big)\\
\nonumber&\cong \moplus_{I\subset [m]}\Big(\widetilde{H}^{\ast}\big(\Sigma|lk_{\varnothing}(K_{I})|\big)
\nonumber\moplus_{\{i\} \in K_I}\widetilde{H}^{\ast}\big(\Sigma|lk_{\{i\}}(K_{I})|\big)\Big)\\ 
\nonumber&\cong \moplus_{I\subset [m]}\Big(\widetilde{H}^{\ast}\big(\mvee_{|I|-1}S^1\big)
\nonumber\moplus_{\{i\} \in K_I}\widetilde{H}^{\ast}\big(\Sigma{\varnothing}\big)\Big)\\ \nonumber
\end{align}
\skp{-0.6}
\nd  where $\widetilde{D}_{\mz,S^0}^{I}(\sigma) \simeq S^0$ is as in \eqref{eqn:ced.sigma}. \end{proof}

\begin{exm}\label{exm:discretepts}
We shall now analyze diagram \eqref{eqn:szerolinks} for the case that
$\mathcal{K} = K$ is the simplicial complex consisting of the empty simplex and $m$ discrete points
$$K = \big\{\varnothing, \{1\}, \{2\}, \ldots, \{m\}\big\}.$$
Set $I \subset [m]$,  $J$ and $L$ subsets of $I$ satisfying $J \cup L = I$. Let $\sigma \in K_I$
be a simplex, we wish to compute
$$\widetilde{H}^{\ast}\big(\Sigma\mid\hspace{-1mm}{lk}_{\sigma}(K_I)\hspace{-1.2mm}\mid\big)
\xleftarrow{\widehat{\xi}_{I}^{J,L})^{\ast}}
\widetilde{H}^{\ast}\big(\Sigma\mid\hspace{-1mm}{lk}_{\tau}(K_J)\hspace{-1.2mm}\mid\big)
\;\motimes\;
\widetilde{H}^{\ast}\big(\Sigma\mid\hspace{-1mm}{lk}_{\omega}(K_L)\hspace{-1.2mm}\mid\big)$$
for $\tau = \sigma\cap J$ and  $\omega = \sigma \cap L$. There are cases to consider:
\begin{enumerate}\itemsep3mm
\item $\sigma = \{i\} \in K_I$
\begin{enumerate}[(a)]\itemsep2mm
\item $\tau = \sigma\cap J = \{i\}$, $\omega = \sigma \cap L = \{i\}$, so $lk_{\tau}(K_J)  = \varnothing$
and $lk_{\omega}(K_L) =\varnothing$.
\item $\tau = \sigma\cap J =\varnothing$, 
$ \omega =\sigma \cap L = \{i\}$, so $lk_{\tau}(K_J) = K_J$
 and $ lk_{\omega}(K_L)= \varnothing$. 
\end{enumerate}
\skp{0.15}
\item $\sigma = \varnothing \in K_I$
\begin{enumerate}\itemsep2mm
\item[Here,] $\tau = \sigma\cap J = \varnothing$, $\omega = \sigma \cap L = \varnothing$, so 
$lk_{\tau}(K_J) = K_J$ and $lk_{\omega}(K_L) = K_L$.
\end{enumerate}
\end{enumerate}
\nd For each case, we wish to compute
\begin{equation}\label{eqn:gens}
\widetilde{H}^{\ast}\big(\Sigma\mid\hspace{-1mm}{lk}_{\sigma}(K_I)\hspace{-1.2mm}\mid\big)
\xleftarrow{\widehat{\xi}_{I}^{J,L})^{\ast}}
\widetilde{H}^{\ast}\big(\Sigma\mid\hspace{-1mm}{lk}_{\tau}(K_J)\hspace{-1.2mm}\mid\big)
\;\motimes\; \widetilde{H}^{\ast}\big(\Sigma\mid\hspace{-1mm}{lk}_{\omega}(K_L)\hspace{-1.2mm}\mid\big).
\end{equation}
\nd {\fontsize{11}{8}{\bf Case[1(a)]:}} In this case the bottom row of \eqref{eqn:szerolinks}, 
which we wish to determine in
cohomology, is
\begin{equation}\label{eqn:case1a}
\begin{tikzcd}[column sep=5ex, row sep=-0.5ex]
\widetilde{H}^{0}(\Sigma{\varnothing})
&\arrow[l, "(\widehat{\xi}_{I}^{J,L})^{\ast}"']
\widetilde{H}^{0}(\Sigma{\varnothing})
\;\motimes\;
\widetilde{H}^{0}(\Sigma{\varnothing})\\
\cong k 
&\hspace{-0.145in}
\cong k \otimes k.
\end{tikzcd}
\end{equation}
We denote the unit generators in \eqref{eqn:case1a} by $\alpha_{i,I}$, $\alpha_{i,J}$ and
$\alpha_{i,L}$ respectively. Applying cohomology to diagram \eqref{eqn:szerolinks}, we see that
$$(\widehat{\Pi}_I^{\ast} \circ   \pi_{\sigma}^{\ast})(\alpha_{i,J} \ast \alpha_{i,L}) = 
(\widehat{\Pi}_J^{\ast} \circ \pi_{\tau}^{\ast})(\alpha_{i,J}) \smile 
(\widehat{\Pi}_L^{\ast} \circ \pi_{\omega}^{\ast})(\alpha_{i,L}).$$
Corresponding to these classes, the top row of the diagram restricts to the cup product isomorphism
\begin{equation}
\begin{tikzcd}[column sep=13ex, row sep=-0.5ex]
\widetilde{H}^{0}(S^0)
&\arrow[l, "\Delta^{\ast}_{Z(K;(\mz,S^0))}"'] \arrow[l, "\cong"]
\widetilde{H}^{0}(S^0)
\;\motimes\;
\widetilde{H}^{0}(S^0)\\
\cong k 
&\hspace{-0.18in}
\cong k \otimes k.
\end{tikzcd}
\end{equation}
Now, since the map $(\iota_{\tau})^{\ast}\otimes (\iota_{\omega})^{\ast}$ is an isomorphism in this example,
we conclude in this case that
\begin{equation}\label{eqn:firstalpha}
\alpha_{i,J} \ast \alpha_{i,L}\;=\; \alpha_{i,I}.
\end{equation}

\nd {\fontsize{11}{8}{\bf Case[1(b)]:}} In this case the bottom row of \eqref{eqn:szerolinks} in cohomology, is
\begin{equation}\label{eqn:case1b}
\begin{tikzcd}[column sep=5ex, row sep=-0.5ex]
\widetilde{H}^{0}(\Sigma{\varnothing})
&\arrow[l, "(\widehat{\xi}_{I}^{J,L})^{\ast}"']
\widetilde{H}^{\ast}(\Sigma{K_J})
\;\motimes\;
\widetilde{H}^{0}(\Sigma{\varnothing})\\
\cong k 
&\hspace{-0.05in}
\cong H^{\ast}\big(\ds{\mvee_{|J|-1}}S^1\big) \otimes k.
\end{tikzcd}
\end{equation}
This time, we denote the unit generators in \eqref{eqn:case1b} by $\alpha_{i,I}$, $\beta_{i,J}$ and
$\alpha_{i,L}$ respectively. Applying cohomology to diagram \eqref{eqn:szerolinks}, we see that
$$(\widehat{\Pi}_I^{\ast} \circ   \pi_{\sigma}^{\ast})(\beta_{i,J} \ast \alpha_{i,L}) = 
(\widehat{\Pi}_J^{\ast} \circ \pi_{\tau}^{\ast})(\beta_{i,J}) \smile 
(\widehat{\Pi}_L^{\ast} \circ \pi_{\omega}^{\ast})(\alpha_{i,L})$$
\nd which is zero for dimensional reasons and we conclude
\begin{equation}\label{eqn:2ndalpha}
\beta_{i,J} \ast \alpha_{i,L}\;=\; 0.
\end{equation}

\nd {\fontsize{11}{8}{\bf Case[2]:}} This time the bottom row of \eqref{eqn:szerolinks}, in cohomology, is
\begin{equation}\label{eqn:case2}
\begin{tikzcd}[column sep=5ex, row sep=-0.5ex]
\widetilde{H}^{\ast}(K_I)
&\arrow[l, "(\widehat{\xi}_{I}^{J,L})^{\ast}"']
\widetilde{H}^{\ast}(K_J)
\;\motimes\;
\widetilde{H}^{ast}(K_L)\\
\cong H^{\ast}\big(\ds{\mvee_{|I|-1}}S^1\big) 
&\hspace{-0.01in}
\cong H^{\ast}\big(\ds{\mvee_{|J|-1}}S^1\big) \otimes H^{\ast}\big(\ds{\mvee_{|L|-1}}S^1\big).
\end{tikzcd}
\end{equation}
The unit generators in \eqref{eqn:case2} are denoted this time by $\beta_{i,I}$, $\beta_{i,J}$ and
$\beta_{i,L}$ respectively. Applying cohomology to diagram \eqref{eqn:szerolinks}, we see that
$$(\widehat{\Pi}_I^{\ast} \circ   \pi_{\sigma}^{\ast})(\beta_{i,J} \ast \beta_{i,L}) = 
(\widehat{\Pi}_J^{\ast} \circ \pi_{\tau}^{\ast})(\beta_{i,J}) \smile 
(\widehat{\Pi}_L^{\ast} \circ \pi_{\omega}^{\ast})(\alpha_{i,L})$$
\nd which is zero for dimensional reasons and so we conclude
\begin{equation}\label{eqn:2ndbeta}
\beta_{i,J} \ast \beta_{i,L}\;=\; 0.
\end{equation}
This ends the discussion of Example \ref{exm:discretepts}.
\end{exm}
An alternative addition to the toolkit for computing the $\ast$-products of links is outlined in the following remark.
\begin{remark}
Let $\mathcal{K}$ be a simplicial complex on $[m]$, $I \subset [m]$ and $\sigma \in \mathcal{K}_I$. For a pair of subsets  $J$ and $L$ 
of $[m]$ satisfying $I = J\cup L$, set  $\tau = \sigma \cap J$ and $\omega = \sigma \cap L$. 
Then $(\mathcal{K}_I)_J = \mathcal{K}_J, (\mathcal{K}_I)_L = \mathcal{K}_L$ and $\big({lk}_{\sigma}(\mathcal{K})\big)_I = {lk}_{\sigma}(\mathcal{K}_I)$. There  are also natural 
inclusions
\begin{equation}\label{eqn:linkfull}
{\begin{tikzcd}[column sep=3ex, row sep=-0.5ex]
\big({lk}_{\sigma}(\mathcal{K}_I)\big)_J \arrow[r, hook, "\iota_{\tau}"] &lk_{\tau}(\mathcal{K}_J) 
\hspace{0.13in}{\rm and}\hspace{-0.2in}
&\big({lk}_{\sigma}(\mathcal{K}_I)\big)_L \arrow[r, hook, "\iota_{\omega}"] &lk_{\omega}(\mathcal{K}_L).
\end{tikzcd}}
\end{equation}\nd
In cases  when these inclusions are equivalences of simplicial complexes, information about link products may be gleaned
from the commutative diagram

{\fontsize{11}{8}\selectfont \begin{equation*}
\begin{tikzcd}[column sep=10ex, row sep=2.5ex]
Z({lk}_{\sigma}(K);(D^1,S^0)) \arrow[r, "\Delta_{Z({lk}_{\sigma}(K);(D^1,S^0)}"] 
\arrow[d, "\;\widehat{\Pi}_I"]
&Z({lk}_{\sigma}(K);(D^1,S^0)) \wedge
Z({lk}_{\sigma}(K);(D^1,S^0)) \arrow[d, "\;\widehat{\Pi}_J \wedge \widehat{\Pi}_L"] \\
\widehat{Z}\big(({lk}_{\sigma}(K))_I;(D^1,S^0)\big)
\arrow[r, "\widehat{\Delta}_{I}^{J,L}"] \arrow[d, "\cong"]
&\widehat{Z}\big(({lk}_{\sigma}(K))_J;(D^1, S^0)\big) \wedge 
\widehat{Z}\big(({lk}_{\sigma}(K))_L;(D^1,S^0)\big) \arrow[d, "\cong"']\\
\Sigma\mid\hspace{-1mm}\big({lk}_{\sigma}(K)\big)_I\hspace{-1.2mm}\mid
\arrow[r, "\widehat{\Delta}_{I}^{J,L}"]
&\Sigma\mid\hspace{-1mm}\big({lk}_{\sigma}(K)\big)_J\hspace{-1.2mm}\mid
\;\wedge\;
\Sigma\mid\hspace{-1mm}\big({lk}_{\sigma}(K)\big)_L\hspace{-1.2mm}\mid.
\end{tikzcd}
\end{equation*}}

\nd where the lower vertical isomorphisms are given by Theorem \ref{thm:wlemma}. In Example \ref{exm:discretepts},
the inclusions \eqref{eqn:linkfull} are equivalences in cases [1(a)] and [2] but not in case [1(b)]. In that case however
a dimension argument can be used to get \eqref{eqn:2ndalpha}.
\end{remark}

\subsection{{Product structure for wedge decomposable pairs}\label{sec:wdpairsprod}} 
We consider now pairs of the form $(\underline{U},\underline{V}) = (\underline{B\vee C},\underline{B\vee E})$ where
the inclusion $E_i \hookrightarrow C_i$ is null homotopic for all $i \in \{1,2,\ldots,m\}$, as in Definition \ref{def:wedgedecomp}. 
We begin with the special case  $B = \ast$ (a point), and compute the $\ast$-product.
In turn, this suffices to determine the ring structure of
$$\widetilde{H}^{\ast}\big(Z(K;(\underline{U},\underline{V}))\big)\;=\; 
\widetilde{H}^{\ast}\big(Z(K;(\underline{C},\underline{E}))\big)$$
by the ring isomorphism \eqref{eqn:hochster}. The $\ast$-product is given by  diagram \eqref{eqn:cediag}
above, based on the decomposition of $\widetilde{H}^{\ast}\big(Z(K;(\underline{C},\underline{E}))\big)$ given by
Corollary \ref{cor:wedge}.

As usual we set $I \subset [m]$,  $J$ and $L$ subsets of $I$ satisfying $J \cup L = I$. Let $\sigma \in K_I$
be a simplex. Our goal then is to compute
\begin{multline}\label{eqn:wedgeclasses}
\widetilde{H}^{\ast}\big(\Sigma{|{lk}_{\tau}(K_J)|}\big)
\otimes H^{\ast}\big(\widehat{D}_{\underline{C},\underline{E}}^{J}(\tau)\big) 
\motimes 
\widetilde{H}^{\ast}\big(\Sigma{|{lk}_{\omega}(K_L)|}\big)
\otimes H^{\ast}\big(\widehat{D}_{\underline{C},\underline{E}}^{L}(\omega)\big) \\
\xrightarrow{\widehat{\xi}_{I}^{J,L})^{\ast}}\;\;\widetilde{H}^{\ast}\big(\Sigma{|{lk}_{\sigma}(K_I)|}\big)
\otimes H^{\ast}\big(\widehat{D}_{\underline{C},\underline{E}}^{I}(\sigma)\big) 
\end{multline}
for $\tau = \sigma\cap J$ and  $\omega = \sigma \cap L$. Consider a class
$$u\otimes v \in \widetilde{H}^{\ast}\big(\widehat{Z}\big(K_{J};(\underline{C},\underline{E})_{J} \big)\big) \otimes
\widetilde{H}^{\ast}\big(\widehat{Z}\big(K_{L};(\underline{C},\underline{E})_{L} \big)\big).$$
in the left hand side of \eqref{eqn:wedgeclasses}, it has the form
\begin{equation}\label{eqn:twoclasses}
u\otimes v  \;=\; \big(\alpha \otimes  \motimes_{i\in \tau}c_i \otimes 
\motimes_{j\in J\msetm \tau}e_j\big) \otimes
\big(\beta \otimes  \motimes_{k\in \omega}c_k \otimes 
\motimes_{l\in L\msetm \omega}e_l\big)
\end{equation}
\nd  where $\alpha$, $\beta$ are classes in $\big(\widetilde{H}^{\ast}(\Sigma{|lk_{\tau}K_J|})$ and 
$\big(\widetilde{H}^{\ast}(\Sigma{|lk_{\omega}K_L|})$ respectively.
(Here the cohomology grading of all the classes has been suppressed.) Aside from classes in the cohomology of 
the links, the only cohomology classes which are multiplied are those specified by the
diagonals  \eqref{eqn:diag} which arise on the  intersection $J\cap L$. That is
\begin{multline*}(\widehat{\Delta}_{J \cup L}^{J,L})^{\ast}(u\otimes v) = (\alpha\ast \beta) \otimes \hspace{-1mm}
\motimes_{i = k \in \tau\cap \omega}\hspace{-1mm}c_{i}c_{k}\;\otimes \hspace{-7mm}
\motimes_{i \neq k \in (\tau\cup\/\omega)\msetm (\tau\cap\/\omega)}\hspace{-7mm}c_{i} \otimes c_{k}
\;\otimes \hspace{-3mm}
\motimes_{j = l \in \tau'\cap \omega'}\hspace{-2mm}e_{j}e_{l}\;\otimes \hspace{-5mm}
\motimes_{j \neq l \in (\tau'\cup \omega')\msetm (\tau'\cap\omega')}\hspace{-6mm}e_{j} \otimes e_{l}\\[3mm]
\in \widetilde{H}^{\ast}(\Sigma{|lk_{\sigma}K_{I}|)}\;
\otimes\; \widetilde{H}^{\ast}(\widehat{D}_{\underline{C},\underline{E}}^{I}(\sigma)).\phantom{mmmmmmmmmm}
\end{multline*}
\nd where $\tau'$ and $\omega'$ denote $J\msetm\tau$ and $L\msetm \omega$ respectively.
We arrive now at the next lemma.
\begin{lem}\label{lem:ce}
The product structure  of the ring $\widetilde{H}^{\ast}\big(Z(K;(\underline{C},\underline{E}))\big)$ is determined 
by the $\ast$-product.
On two classes given as in  \eqref{eqn:twoclasses}, it is evaluated by taking the $\ast$-product of the link classes, 
(cf. subsection \ref{subsec:linkprods}), and
the ordinary cohomology product, coordinate-wise on the other cohomology factors of \eqref{eqn:wedgeclasses}
which correspond.\hspace{1.9truein}$\square$
\end{lem}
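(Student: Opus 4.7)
The plan is to unwind Definition \ref{defn: starproduct} on the generators produced by Corollary \ref{cor:wedge}, verify that the partial diagonal \eqref{eqn:partialdiags} respects the wedge-lemma decomposition summand-by-summand, and then split the diagonal on a single summand as a smash product of two pieces: the link diagonal $\widehat{\xi}^{J,L}_I$ of \eqref{eqn:cediag} (accounting for the $\ast$-product of links) and the restriction of the doubling $\widehat{\psi}^{J,L}_I$ followed by the shuffle $\widehat{S}$ to the factor $\widehat{D}^{I}_{\underline{C},\underline{E}}(\sigma)$ (accounting for the coordinate-wise cup product).

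More concretely, by the ring isomorphism \eqref{eqn:hochster} together with \eqref{eqn:startocup}, the cup product on $\widetilde{H}^\ast(Z(K;(\underline{C},\underline{E})))$ is completely determined by the $\ast$-products between the summands $\widetilde{H}^\ast(\widehat{Z}(K_I;(\underline{C},\underline{E})_I))$ for $I\subseteq [m]$. Corollary \ref{cor:wedge} further decomposes each such summand as
\[
\moplus_{\sigma \in K_I} \widetilde{H}^\ast(\Sigma|lk_{\sigma}(K_I)|)\otimes \widetilde{H}^\ast(\widehat{D}^{I}_{\underline{C},\underline{E}}(\sigma)),
\]
via the Künneth theorem applied to the join $|lk_{\sigma}(K_I)|\ast \widehat{D}^{I}_{\underline{C},\underline{E}}(\sigma)$. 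Given two classes $u,v$ presented as in \eqref{eqn:twoclasses}, supported on the $\tau$-summand of $K_J$ and the $\omega$-summand of $K_L$, Lemma \ref{lem:diagsbehave} guarantees that $\widehat{\Delta}^{J,L}_{I}$ sends the $\sigma$-summand into the smash product of the $\tau$- and $\omega$-summands precisely when $\tau = \sigma\cap J$ and $\omega = \sigma\cap L$; for any other $\sigma$ the induced contribution vanishes. Hence computing $u\ast v$ reduces to working on this one pair of summands.

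On that summand, factor $\widehat{\Delta}^{J,L}_I$ through \eqref{eqn:composite1}--\eqref{eqn:composite2}. The link factor $\Sigma|lk_{\sigma}(K_I)|$ is not affected by the doubling $\widehat{\psi}^{J,L}_I$ (which acts trivially on the link coordinate), and its image under the shuffle $\widehat{S}$ is exactly the map $\widehat{\xi}^{J,L}_I$ of \eqref{eqn:cediag}, by construction; this yields the factor $\alpha\ast\beta$ described in subsection \ref{subsec:linkprods}. The factor $\widehat{D}^{I}_{\underline{C},\underline{E}}(\sigma)$ is a smash product of spaces $C_i$ (for $i\in\sigma$) and $E_i$ (for $i\in I\setminus\sigma$). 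On it, the doubling $\widehat{\psi}^{J,L}_I$ is the identity on coordinates in $(J\cup L)\setminus(J\cap L)$ and the diagonal on coordinates in $J\cap L$; after composing with the shuffle, each coordinate $i\in J\cap L$ is sent by the standard diagonal of $C_i$ (if $i\in\tau\cap\omega$) or $E_i$ (if $i\in\tau'\cap\omega'$) into the corresponding smash factors of the $J$- and $L$-sides, while coordinates in the symmetric differences are distributed into whichever side contains them. Taking cohomology then produces exactly the coordinate-wise cup products $c_ic_k$ on $i=k\in\tau\cap\omega$, $e_je_l$ on $j=l\in\tau'\cap\omega'$, and tensor-product pairings on the non-matching coordinates, which is precisely the formula displayed before the lemma.

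The main obstacle is bookkeeping: one must confirm that the shuffle $\widehat{S}$ regroups the smash factors in a way that is compatible with the identification $(\sigma\cup I)\setminus(\tau\cap J)$ etc., so that the Künneth identifications line up, and in particular that a coordinate $i\in J\setminus L$ contributing a class in $H^\ast(C_i)$ (resp.\ $H^\ast(E_i)$) on the $J$-side is matched with the identity class on the $L$-side and not inadvertently multiplied. Once this is verified, combining the link $\ast$-product formula from subsection \ref{subsec:linkprods} with the coordinate-wise cup products yields the claim, and extends by $k$-linearity to arbitrary classes. \hfill$\square$
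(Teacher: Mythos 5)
Your proposal is correct and follows essentially the same route as the paper: the paper also reduces via the ring isomorphism \eqref{eqn:hochster} to $\ast$-products, uses Lemma \ref{lem:diagsbehave} to see that the partial diagonal carries the $\sigma$-summand of the wedge-lemma decomposition to the $(\tau,\omega)$-summand with $\tau=\sigma\cap J$, $\omega=\sigma\cap L$, and then reads off the displayed formula for $(\widehat{\Delta}_{J\cup L}^{J,L})^{\ast}(u\otimes v)$ by observing that, apart from the link classes, only the factors indexed by $J\cap L$ are multiplied. Your extra care with the factorization through $\widehat{\psi}^{J,L}_{I}$ and the shuffle $\widehat{S}$, and with coordinates in the symmetric difference, is exactly the bookkeeping the paper leaves implicit.
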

\begin{rem}
\nd Though this formula is concise, the computation of the link product $\alpha\ast \beta$ can be an obstacle. It is 
described in more detail in \cite[Section 7]{bbcg10} from the point of view of L.~Cai's work \cite{cai1}. 
From the more practical perspective of subsection \ref{subsec:linkprods}, diagram \eqref{eqn:szerolinks}
relates the $\ast$-product of these links to an actual cup product in $\widetilde{H}^{\ast}\big(Z(K;(\mz,S^0))\big)$ 
via \eqref{eqn:starequalscup}, which can often be checked explicitly. We shall take this approach below.
\end{rem}

\skp{0.1}
We consider next a full wedge decomposable pair $(\underline{U},\underline{V}) = (\underline{B\vee C},\underline{B\vee E})$
beginning with the part of the partial diagonals given by the shuffle 
maps from \eqref{eqn:composite2}.
\begin{equation}\label{eqn:shufflemapwd}
 \widehat{Z}\big(K_{P\cup Q};(\underline{U},\underline{V})^{P,Q}_{P\cup Q}\big)    
 \xrightarrow{\widehat{S}}  \widehat{Z}\big(K_{P};(\underline{U},\underline{V})_{P} \big) \wedge 
\widehat{Z}\big(K_{Q};(\underline{U},\underline{V})_{ Q} \big)
\end{equation}
For the pairs $(\underline{U},\underline{V})$, \eqref{eqn:doublingpairs} becomes
\begin{equation*}
\big[(\underline{U},\underline{V})^{P,Q}_{P\cup Q}\big]_i = \begin{cases}
(U_i,V_i) = (B_i\vee C_i,B_i\vee E_i) \quad& {\rm if}\; i \in P\cup Q \msetm P \cap Q\\
\big(U_i\wedge U_i, V_i \wedge V_i\big) 
= ({\widehat{B}_i\vee \widehat{C_i}}, {\widehat{B}_i\vee \widehat{E}_i})
&{\rm if}\; i \in P\cap Q. \end{cases} 
\end{equation*}
where here
\begin{align}\label{eqn:newbce}
\widehat{B}_i&= B_i\wedge B_i\nonumber\\
\widehat{C}_i&= (C_i\wedge C_i) \vee (C_i\wedge B_i) \vee  (B_i \wedge C_i)\\
\widehat{E}_i&= (E_i\wedge E_i) \vee (E_i\wedge B_i) \vee  (B_i \wedge E_i).\nonumber
\end{align}
In particular, the pairs in $(\underline{U},\underline{V})^{P,Q}_{P\cup Q}$ are all wedge
decomposable. This motivates the notational convention following.
\begin{defin}\label{defn:tildepairs}
For any subsets $S,T$ subsets of [$m]$, we set the notation
$$(\underline{\widetilde{B}\vee \widetilde{C}},\underline{\widetilde{B}\vee \widetilde{E}})_{S\cup T}  
\;=\; (\underline{U},\underline{V})^{S,T}_{S\cup T}$$   
\nd where
\begin{equation*}
(\widetilde{B}_i\vee \widetilde{C}_i,\widetilde{B}_i\vee \widetilde{E}_i) 
\;\becomes\begin{cases}
(B_i\vee C_i,B_i\vee E_i) \quad& {\rm if}\; i \in (S\cup T) \msetm (S\cap T)\\
(\widehat{B}_i\vee \widehat{C}_i, \widehat{B}_i\vee \widehat{E}_i)
&{\rm if}\; i \in S\cap T. \end{cases} 
\end{equation*}

In that which follows in this section, we adopt additional notation following: 
\samepage{\begin{enumerate}
\item  $P$ and $Q$ are subsets of $[m]$
\item $I\subset P\cup Q$
\item  $I = J\cup L$ with $J\subset P$ and $L \subset Q,$
\end{enumerate}}
\nd Notice that in this notation,
\begin{equation}\label{eqn:pqandi}
(P\cup Q) \msetm I \;=\; \big((P\msetm J)\cup (Q\msetm L)\big) \msetm
\big(J\cap (Q\msetm L)\big) \cup \big(L\cap (P\msetm J)\big).
\end{equation}
Generally, the meaning of the notation should become clear from the context. Figure\;$1$ is a Venn
diagram illustrating these sets, among other things.
\end{defin}

Next, we use
\begin{equation}\label{eqn:uvshufflewd}
\widehat{Z}\big(K_{P\cup Q};(\underline{U},\underline{V})^{P,Q}_{P\cup Q}\big)  \;=\;
\widehat{Z}\big(K_{P\cup Q}; (\widetilde{B}\vee \widetilde{C},\widetilde{B}\vee \widetilde{E})_{P\cup Q} \big),
\end{equation}
and apply Theorem \ref{thm:main}  to the right hand side, ($P\cup Q$ here plays the role of $[m]$ in the theorem),
to exhibit the shuffle map \eqref{eqn:shufflemapwd} on a wedge summand of \eqref{eqn:uvshufflewd} as 
{\small\begin{multline}\label{eqn:shuffleforwd}
\widehat{Z}\big(K_{I};(\underline{\widetilde{C}},\underline{\widetilde{E}})_{I}\big)
\wedge \widehat{Z}\big(K_{P\cup Q- I};(\underline{\widetilde{B}},\underline{\widetilde{B}})_{P\cup Q-I}\big)\\
\hspace{-3cm}\xrightarrow{\phantom{m}\widehat{S}\phantom{m}}
\widehat{Z}\big(K_{J};(\underline{C},\underline{E})_J\big)
\wedge\widehat{Z}\big(K_{P- J};(\underline{B},\underline{B})_{P-J}\big)
\wedge \;\widehat{Z}\big(K_{L};(\underline{C},\underline{E})_L \big)
\wedge \widehat{Z}\big(K_{Q- L};(\underline{B},\underline{B})_{Q-L}\big)
\end{multline}}
\hspace{-3mm} where $J\cup L = I$. In each of the the partial diagonal maps, the shuffle map is 
preceded by the map $\widehat{\psi}^{J,L}_{J\cup L}$ of \eqref{eqn:composite1} and \eqref{eqn:diag}
\begin{equation}\label{eqn:psiuv}
 \widehat{Z}\big(K_{P\cup Q};(\underline{U},\underline{V})_{P\cup Q} \big)  
 \xrightarrow{\widehat{\psi}^{P,Q}_{P\cup Q}} 
 \widehat{Z}\big(K_{P\cup Q};(\underline{U},\underline{V})^{P,Q}_{P\cup Q}\big).
 \end{equation}
 We apply now Theorem \ref{thm:main} to this and consider the same wedge summand to get
 \begin{multline}\label{eqn:induceddiag}
 \widehat{Z}\big(K_{I};(\underline{C},\underline{E})_I\big)
\wedge\widehat{Z}\big(K_{P\cup Q- I};(\underline{B},\underline{B})_{P\cup Q-I}\big)\\
\xrightarrow{\phantom{m}\widehat{\chi}^{J,L}_{I}\phantom{m}}
\widehat{Z}\big(K_{I};(\underline{\widetilde{C}},\underline{\widetilde{E}})_{I}\big)
\wedge \widehat{Z}\big(K_{P\cup Q- I};(\underline{\widetilde{B}},\underline{\widetilde{B}})_{P\cup Q-I}\big).
\end{multline}
where  $\widehat{\chi}^{J,L}_{I}$ is induced from $\widehat{\psi}^{P,Q}_{P\cup Q}$ in \eqref{eqn:psiuv}   
via Theorem \ref{thm:main}. In order to analyze this further, we need a lemma.
\begin{lem}\label{lem:nomixing}
The diagram below commutes.
\begin{equation}\label{eqn:starequalscup2}
\hspace{0.0cm}\begin{tikzcd}[column sep = 3cm]
\widehat{Z}\big(K_{I};(\underline{C},\underline{E})_I\big)
\arrow[r, "\widehat{\chi}^{J,L}_{I}\big|_{ \widehat{Z}\big(K_{I};(\underline{C},\underline{E})_I\big)}"]
&\widehat{Z}\big(K_{I};(\underline{\widetilde{C}},\underline{\widetilde{E}})_{I}\big)\\ 
\widehat{Z}\big(K_{I};(\underline{C},\underline{E})_I\big)
\arrow[r, "\widehat{\psi}^{J,L}_{I}"] \arrow[u, "="'] 
&\widehat{Z}\big(K_{I};(\underline{C},\underline{E})_{J\cup L}^{J,L}\big)
\arrow[u, "\widehat{\phi}^{J,L}_{I}"]
\end{tikzcd}
\end{equation}
\nd where the map $\widehat{\phi}^{J,L}_{I}$ is induced by the map of pairs
{\small\begin{equation*}
\begin{cases} (C_i,E_i) \mapsto (C_i,E_i)
\phantom{m}\text{if}\;\; i \in  J\cup L \msetm J \cap L, 
\;\text{else if}\;\; i \in J\cap L,\\
\big(C_i  \wedge C_i, E_i\wedge E_i) \hookrightarrow  \big((C_i  \wedge C_i) \vee (B_i\wedge C_i) 
\vee (C_i \wedge B_i),\; \;(E_i \wedge E_i) \vee  (B_i \wedge E_i)\vee (B_i\wedge E_i)\big) \end{cases}
\end{equation*}}
the latter by the inclusion into the first wedge summands.
\end{lem}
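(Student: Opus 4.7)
My plan is to verify the commutativity of the square in \eqref{eqn:starequalscup2} by reducing it to a vertex-by-vertex identity on the pairs $(C_i,E_i)$, and then invoking the functoriality of the colimit that defines the polyhedral smash product.

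First, I would observe that all four spaces in \eqref{eqn:starequalscup2} are colimits of diagrams of the form \eqref{eqn:d.sigma} indexed by the category of simplices of $K_I$, and that each of the four arrows is induced by a natural transformation of such diagrams: $\widehat{\psi}^{P,Q}_{P\cup Q}$, $\widehat{\psi}^{J,L}_I$ and $\widehat{\phi}^{J,L}_I$ are induced by explicit maps of pointed pairs of CW complexes on each vertex, while $\widehat{\chi}^{J,L}_I$ arises via the naturality of Theorem \ref{thm:main} applied to $\widehat{\psi}^{P,Q}_{P\cup Q}$. Consequently it is enough to check the commutativity one simplex $\sigma\in K_I$ at a time, at which point each of the four spaces reduces to a smash product of vertex-wise factors.

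Second, at each vertex $i\in I$ I would write out the induced map explicitly. The composite $\widehat{\phi}^{J,L}_I\circ\widehat{\psi}^{J,L}_I$ acts as the identity on $(C_i,E_i)$ when $i\in I\setminus(J\cap L)$, and when $i\in J\cap L$ it is the reduced diagonal $(C_i,E_i)\to(C_i\wedge C_i,\,E_i\wedge E_i)$ followed by the first wedge summand inclusion into $(\widehat{C}_i,\widehat{E}_i)$. On the other hand, the naturality of Theorem \ref{thm:main} identifies the restriction of $\widehat{\chi}^{J,L}_I$ to the wedge summand $\widehat{Z}(K_I;(\underline{C},\underline{E})_I)$ with the vertex-wise smash of the reduced diagonal of $U_i=B_i\vee C_i$ restricted to its $C_i$ summand (and analogously for $V_i$ on its $E_i$ summand), which is non-trivial precisely when $i\in P\cap Q$.

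Third, the key technical identity I would need is that for a wedge $B\vee C$ the reduced diagonal $\Delta\colon B\vee C\to(B\vee C)\wedge(B\vee C)$ restricted to the $C$ summand factors as
\[
C\xrightarrow{\Delta_{C}}C\wedge C\;\hookrightarrow\;(C\wedge C)\vee(B\wedge C)\vee(C\wedge B)=\widehat{C},
\]
with trivial contribution to the cross-term summands $B\wedge C$ and $C\wedge B$. This is a direct point-set check (for $c\in C$, $\Delta(c)=c\wedge c$), and the analogous statement holds for $B\vee E$ and its $E$ summand. Combined with the vertex-wise description in the previous step, this matches the two composites factor-by-factor, and colimit functoriality then assembles these identities into the commutativity of \eqref{eqn:starequalscup2}.

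The main obstacle I anticipate is the bookkeeping needed to identify the two doubling patterns: $\widehat{\chi}^{J,L}_I$ restricts to a diagonal-plus-inclusion precisely for $i\in I\cap P\cap Q$, whereas $\widehat{\phi}^{J,L}_I\circ\widehat{\psi}^{J,L}_I$ does so for $i\in J\cap L$. Reconciling the two amounts to the natural identification $J\cap L=I\cap P\cap Q$ (compatible, for instance, with $J=I\cap P$, $L=I\cap Q$) together with the verification that for every other $i\in I$ both maps reduce honestly to the identity $(C_i,E_i)=(\widetilde{C}_i,\widetilde{E}_i)$; this is exactly the \emph{no-mixing} content encoded in the lemma's name.
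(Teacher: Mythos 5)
Your proposal is correct and follows essentially the same route as the paper, whose entire proof is the one-line appeal to ``the naturality of Theorem \ref{thm:main} for maps of wedge decomposable pairs''; you have simply unpacked that naturality into the vertex-wise check that the reduced diagonal of $B_i\vee C_i$ restricted to the $C_i$ summand lands in $C_i\wedge C_i$ with no cross-terms. Your closing remark on reconciling the doubling sets ($J\cap L$ versus $I\cap P\cap Q$) is a useful piece of bookkeeping that the paper leaves implicit.
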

\begin{proof}
This follows from the naturality of Theorem \ref{thm:main} for maps of wedge decomposable pairs.
\end{proof}
This lemma allow us now to replace \eqref{eqn:induceddiag} with the  map
\begin{multline}\label{eqn:okfordiags}
\widehat{Z}\big(K_{I};(\underline{C},\underline{E})_I\big)
\wedge\widehat{Z}\big(K_{P\cup Q- I};(\underline{B},\underline{B})_{P\cup Q-I}\big)\\
\xrightarrow{\phantom{n}\widehat{\psi}^{J,L}_{I}\wedge\; 
\widehat{\psi}^{P-J,Q-L}_{P\cup Q- I}\phantom{i}}
\;\widehat{Z}\big(K_{I};(\underline{C},\underline{E})_{J\cup L}^{J,L}\big)
\wedge \widehat{Z}\big(K_{P\cup Q- I};
(\underline{B},\underline{B})_{P\cup Q-I}^{P-J, Q-L}\big).
\end{multline}
\nd where we have used
\begin{equation}\label{eqn:bsthesame}
(\underline{B},\underline{B})_{P\cup Q-I}^{P-J, Q-L} =
(\underline{\widetilde{B}},\underline{\widetilde{B}})_{P\cup Q-I}.
\end{equation}

The next theorem describes now the product for the cohomology of a wedge decomposable pair.
\begin{thm}\label{thm:product for wdpair}
The partial diagonal map for a wedge decomposable pair $(U,V)$
$$\widehat{\Delta}_{P \cup Q}^{P,Q}\;\colon \widehat{Z}\big(K_{P\cup Q};(\underline{U},\underline{V})_{P\cup Q} \big) 
\longrightarrow \widehat{Z}\big(K_{P};(\underline{U},\underline{V})_{P} \big) \wedge 
\widehat{Z}\big(K_{Q};(\underline{U},\underline{V})_{ Q} \big),$$
is realized on each wedge summand given by Theorem \ref{thm:main} as
{\fontsize{10.5}{11}\selectfont \begin{multline}\label{eqn:smashofdiags}
\widehat{Z}\big(K_{I};(\underline{C},\underline{E})_I\big)
\wedge\widehat{Z}\big(K_{P\cup Q- I};(\underline{B},\underline{B})_{P\cup Q-I}\big)
\hspace{0.1truein}\xrightarrow{\phantom{n}\widehat{\Delta}^{J,L}_{I}\wedge\; \widehat{\Delta}^{P-J,Q-L}_{P\cup Q- I}\phantom{i}}\\
\big(\widehat{Z}(K_{J};(\underline{C},\underline{E})_{J})
\wedge \widehat{Z}(K_{P- J}(\underline{B},\underline{B})_{P-J})\big)
\wedge \big(\widehat{Z}(K_{L};(\underline{C},\underline{E})_{L})
\wedge \widehat{Z}(K_{Q- L}(\underline{B},\underline{B})_{Q- L})\big).
\end{multline}}
\end{thm}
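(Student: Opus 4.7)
The plan is to exploit the canonical factorization $\widehat{\Delta}_{P\cup Q}^{P,Q} = \widehat{S} \circ \widehat{\psi}_{P\cup Q}^{P,Q}$ from \eqref{eqn:composite1}--\eqref{eqn:composite2}, and to analyze each of the two stages separately by decomposing source and target via the Cartan formula of Theorem \ref{thm:main}.

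First, after the identification \eqref{eqn:uvshufflewd}, both the source and the target of $\widehat{\psi}_{P\cup Q}^{P,Q}$ are polyhedral smash products on wedge decomposable pairs (the target on the pair $(\widetilde{B}\vee\widetilde{C},\widetilde{B}\vee\widetilde{E})$ of Definition \ref{defn:tildepairs}). Theorem \ref{thm:main} therefore decomposes both into wedges indexed by $I \leq P\cup Q$. By the naturality of Theorem \ref{thm:main} with respect to maps of wedge decomposable pairs, $\widehat{\psi}_{P\cup Q}^{P,Q}$ restricts on the $I$-summand to the map $\widehat{\chi}^{J,L}_I$ of \eqref{eqn:induceddiag}. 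Lemma \ref{lem:nomixing} then identifies $\widehat{\chi}^{J,L}_I$, modulo the wedge inclusion $\widehat{\phi}^{J,L}_I$ coming from the internal distribution in \eqref{eqn:newbce}, with the \emph{non-mixing} diagonal $\widehat{\psi}^{J,L}_I$ on the $(\underline{C},\underline{E})$-factor. Since the $B$-factor is already self-paired, \eqref{eqn:bsthesame} gives the parallel map $\widehat{\psi}^{P-J,Q-L}_{P\cup Q - I}$ on the $(\underline{B},\underline{B})$-factor, producing the form \eqref{eqn:okfordiags}.

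Next I would compose with the shuffle map $\widehat{S}$ from \eqref{eqn:composite2}. Since $\widehat{S}$ is a pure rearrangement of smash factors at the diagram level, when restricted to the target of \eqref{eqn:okfordiags} it splits as the smash product of two shuffles, one acting on the $(\underline{C},\underline{E})$-part and one on the $(\underline{B},\underline{B})$-part, as recorded in \eqref{eqn:shuffleforwd}. Recognising each of the two composites $\widehat{S}\circ\widehat{\psi}$ as a partial diagonal in the sense of \eqref{eqn:partialdiags} then yields the formula \eqref{eqn:smashofdiags}.

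The main obstacle is the combinatorial bookkeeping of vertex partitions --- ensuring that the wedge summands of the source and the doubly-indexed wedge summands of the target correspond under $\widehat{\chi}^{J,L}_I$ with $J\cup L = I$, and that the $B$-factors distribute correctly across the overlap described by \eqref{eqn:pqandi}. The non-mixing Lemma \ref{lem:nomixing} absorbs the nontrivial part of this subtlety, so that once it is in hand the remainder of the argument reduces to naturality of the Cartan decomposition and the geometry of the shuffle.
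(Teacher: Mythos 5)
Your proposal is correct and follows essentially the same route as the paper: the paper's proof likewise factors $\widehat{\Delta}_{P\cup Q}^{P,Q}$ as $\widehat{S}\circ\widehat{\psi}^{P,Q}_{P\cup Q}$, uses the naturality of Theorem \ref{thm:main} together with Lemma \ref{lem:nomixing} to reduce \eqref{eqn:induceddiag} to \eqref{eqn:okfordiags}, and then composes with the shuffle map \eqref{eqn:shuffleforwd} via \eqref{eqn:bsthesame}. No gaps.
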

\begin{proof}
The map is obtained in this form by using \eqref{eqn:bsthesame} to compose \eqref{eqn:okfordiags} with the shuffle map \eqref{eqn:shuffleforwd}.
\end{proof}
\skp{0.1}
This theorem enables  now direct calculation of the $\ast$-product on a wedge decomposable pair
by combining the calculation for $(\underline{C},\underline{E})$ given by Lemma \ref{lem:ce} with  calculation
for$(\underline{B},\underline{B})$ which is described next.

No links appear in the $\widehat{Z}\big(K_{P\cup Q- I};(\underline{B},\underline{B})_{P\cup Q- I}\big)$ wedge summand
and we have
have 
$$\widetilde{H}^{\ast}\big(\widehat{Z}(K_{P\cup Q- I};(\underline{B},\underline{B})_{P\cup Q- I})\big) \cong 
\motimes_{j \in {P\cup Q- I}}\widetilde{H}^{\ast}(B_j).$$
On this wedge factor, the partial diagonals are
\begin{multline}\label{eqn:bpdiag}
\widehat{\Delta}_{(P\cup Q-I)}^{P-J, Q-L} \;\colon 
 \widehat{Z}\big(K_{P\cup Q -I} ;(\underline{B},\underline{B})_{P\cup Q -I}\big) \\
 \longrightarrow
 \widehat{Z}\big(K_{P-J} ;(\underline{B},\underline{B})_{P-J}\big) \wedge 
 \widehat{Z}\big(K_{Q-L} ;(\underline{B},\underline{B})_{Q-L}\big),
\end{multline}
\nd inducing
\begin{equation}\label{eqn:bprod}
\motimes_{j \in P-J}\hspace{-2mm}\widetilde{H}^{\ast}(B_j) \otimes 
\motimes_{k\in Q-L}\hspace{-2mm}\widetilde{H}^{\ast}(B_l) 
\;\longrightarrow\hspace{-3mm} \motimes_{l \in P\cup Q-I}\hspace{-4mm} \widetilde{H}^{\ast}(B_l)
\end{equation}
\nd As in the case $(\underline{C},\underline{E})$, classes in the same cohomology factors are multiplied; these are the 
$\widetilde{H}^{\ast}(B_i)$ with $i \in  (P\msetm J) \cap (Q\msetm L)$, cf. \eqref{eqn:pqandi}. This is all summarized in the next 
corollary.
\begin{cor}\label{cor:wdecompprod}
The $\ast$-product associated to the ring $\widetilde{H}^{\ast}\big(Z(K;(\underline{U},\underline{V}))\big)$ for a wedge decomposable pair
$(\underline{U},\underline{V})$,  is determined by the smash product of partial diagonal maps \eqref{eqn:smashofdiags}. 
In cohomology the products are described by Lemma \ref{lem:ce} and by \eqref{eqn:bprod}. This determines  the cup product structure in 
$H^{\ast}\big(K;(\underline{U}, \underline{V})\big)$ by the description given in subsection \ref{subsec:back}.
\end{cor}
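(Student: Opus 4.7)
The plan is to assemble the corollary as a bookkeeping statement that combines three already-established ingredients: the geometric factorization of partial diagonals in Theorem~\ref{thm:product for wdpair}, the link-level calculation of the $\ast$-product for pairs $(\underline{C},\underline{E})$ in Lemma~\ref{lem:ce}, and the product computation for $(\underline{B},\underline{B})$ pairs given by \eqref{eqn:bprod}. First I would apply the Cartan formula of Theorem~\ref{thm:main} to both source and target of the partial diagonal $\widehat{\Delta}_{P\cup Q}^{P,Q}$, expressing each side as a wedge indexed by subsets $I\subseteq P\cup Q$ (resp.\ $J\subseteq P$ and $L\subseteq Q$). By Theorem~\ref{thm:product for wdpair}, on the wedge summand indexed by $I=J\cup L$ the partial diagonal factors as the smash product
$$\widehat{\Delta}^{J,L}_{I}\wedge\widehat{\Delta}^{P-J,Q-L}_{P\cup Q-I}.$$

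Next I would pass to cohomology and invoke the K\"unneth theorem, which (under the strong freeness hypothesis of Definition~\ref{defn:strongfc}) converts this smash of maps into a tensor product of induced homomorphisms. The first tensor factor is exactly the map dual to $\widehat{\Delta}^{J,L}_{I}$ on $\widehat{Z}(K_I;(\underline{C},\underline{E})_I)$, whose effect on the classes of \eqref{eqn:twoclasses} is computed by Lemma~\ref{lem:ce}: link classes combine via the link $\ast$-product (accessible through the auxiliary space $Z(K;(\mz,S^0))$ as in subsection~\ref{subsec:linkprods}), while the remaining $c_i$ and $e_j$ factors multiply coordinate-wise on the overlap $J\cap L$. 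The second tensor factor, given by $\widehat{\Delta}^{P-J,Q-L}_{P\cup Q-I}$ on the $(\underline{B},\underline{B})$-summand, carries no link contributions and is computed by \eqref{eqn:bprod}: the cohomology classes $b_i$ multiply coordinate-wise on the intersection $(P\smallsetminus J)\cap(Q\smallsetminus L)$, with the remaining factors distributed as prescribed by \eqref{eqn:pqandi}.

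Combining these two calculations evaluates $u\ast v$ on every wedge summand produced by the Cartan decomposition, so that the resulting formula determines the $\ast$-product on the whole ring $\bigoplus_{I\subseteq [m]}\widetilde{H}^\ast(\widehat{Z}(K_I;(\underline{U},\underline{V})_I))$. Finally, the ring isomorphism $\eta$ of \eqref{eqn:hochster}, which is the algebraic incarnation of the stable splitting of Theorem~\ref{thm:bbcgsplitting}, transports this $\ast$-product to the cup product on $\widetilde{H}^\ast(Z(K;(\underline{U},\underline{V})))$ via \eqref{eqn:startocup}; this is the content of subsection~\ref{subsec:back} and yields the claimed cup-product description.

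The main obstacle will be the link-level step embedded in Lemma~\ref{lem:ce}: the shuffle portion of the partial diagonal is purely formal, but the link $\ast$-product $\alpha\ast\beta$ that appears is intrinsically geometric and requires either the Cai-type analysis of \cite{bbcg10} or the $Z(K;(\mz,S^0))$-trick of subsection~\ref{subsec:linkprods} together with Lemma~\ref{lem:diagsbehave} to be made explicit. Once that product is in hand, the remainder of the proof is a K\"unneth bookkeeping exercise tracking which coordinates lie in $J\cap L$, in $(P\smallsetminus J)\cap(Q\smallsetminus L)$, or in the symmetric differences of \eqref{eqn:pqandi}, and the corollary follows immediately.
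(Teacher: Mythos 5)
Your proposal is correct and follows essentially the same route as the paper: the corollary is precisely the assembly of Theorem \ref{thm:product for wdpair} (the factorization $\widehat{\Delta}^{J,L}_{I}\wedge\widehat{\Delta}^{P-J,Q-L}_{P\cup Q-I}$ on Cartan wedge summands), Lemma \ref{lem:ce} for the $(\underline{C},\underline{E})$ factor, \eqref{eqn:bprod} for the $(\underline{B},\underline{B})$ factor, and the passage to the cup product via \eqref{eqn:hochster} and \eqref{eqn:startocup}. You also correctly identify, as the paper's own remark does, that the residual difficulty lies in the link product $\alpha\ast\beta$, handled via $Z(K;(\mz,S^0))$.
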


\subsection{The product structure for general CW pairs}\label{subsec:prodxa}
We wish now to extend these results about products, from wedge decomposable pairs
$(\underline{U},\underline{V}) = (\underline{B\vee C},\underline{B\vee E})$, to the cohomology of
$Z(K;(\underline{X}, \underline{A})$ for general pairs $(\underline{X},\underline{A})$.
Our main tool is Theorem \ref{thm:cartan2} which asserts that, given $(\underline{X}, \underline{A})$, we can find
wedge decomposable pairs $(\underline{U},\underline{V}) = (\underline{B\vee C},\underline{B\vee E})$ so that as 
groups, there is an isomorphism
\begin{equation}\label{eqn:generalequalsuv}
\theta_{(\underline{U},\underline{V})} \colon \widetilde{H}^\ast\big(Z(K;(\underline{U}, \underline{V})\big) \longrightarrow {H}^\ast\big(Z(K;(\underline{X}, \underline{A})\big).
\end{equation}
\nd Here \eqref{eqn:generalequalsuv} is used to label the generators only, the product structure will be in terms
of the product structure in the rings $\widetilde{H}^{\ast}(X_i)$ and $\widetilde{H}^{\ast}(A_i)$ and {\em not\/} in $\widetilde{H}^{\ast}(B_i)$,
$\widetilde{H}^{\ast}(C_i)$ or $\widetilde{H}^{\ast}(E_i)$. 

Unlike the previous example of wedge decomposable pairs, Lemma \ref {lem:nomixing} will not hold as 
the pairs $(\underline{X}, \underline{A})$ are not wedge decomposable in general.  In this new situation, products in 
$\widetilde{H}^{\ast}(X_i)$ and $\widetilde{H}^{\ast}(A_i)$ will mix the modules $B_i'$, $C_i'$ and $E_i'$
and so upset the links which appear in the partial diagonal maps \eqref{eqn:cediag} and \eqref{eqn:bpdiag}.
Our task then is to keep track of these changes.

We begin by examining the diagonal maps for the spaces $X_i$ and $A_i$ and look at the diagonal maps
\eqref{eqn:diag} in terms of the notation adopted in Definition \ref{defn:strongfc}. For the long exact
sequence
\begin{equation*}\label{eqn:lesdiag}
\overset{\delta}{\to} \widetilde{H}^\ast(X_i\wedge X_i\big/A_i\wedge A_i) \overset{\ell}{\to} 
\widetilde{H}^\ast(X_i\wedge X_i) \overset{\iota}{\to} \widetilde{H}^*(A_i\wedge A_i)  
\overset{\delta}{\to} \widetilde{H}^{\ast+1}(X_i\wedge X_i\big/A_i\wedge A_i) \to  
\end{equation*}
\nd the appropriate image, kernel and cokernel modules are given as follows. 
\nd \skp{-0.15}
\begin{enumerate}\itemsep1.3mm
\item $\widetilde{H}^{\ast}(A_i\wedge A_i)\cong \check{B}_i\oplus \check{E}_i$
\item  $\widetilde{H}^{\ast}(X_i\wedge X_i)\cong \check{B}_i \oplus \check{C}_i$,  \; where 
$\check{B}_i \underset{\simeq}{\overset{\iota}{\to}} \check{B}_i, \;\;\left.\iota\right|_{\check{C}_i} = 0$ 
\item  $\widetilde{H}^{\ast}(X_i\wedge X_i\big/A_i\wedge A_i)\cong \check{C}_i\oplus W_i',$ \;
where $\check{C}_i \underset{\simeq}{\overset{\ell}{\to}}\check{C}_i, \;\; \left.\ell\right|_{\check{B}_i} = 0, \;\; 
\check{E}_i \underset{\simeq}{\overset{\delta}{\to}} W_i'$. 
\end{enumerate}

Comparing these to the graded $k$-modules we have for the pair $(\underline{X},\underline{A})$ we get
\begin{align}\label{eqn:newbcexa}
\check{B}_i&= B_i' \otimes B_i'\nonumber\\
\check{C}_i&= C_i' \otimes C_i'\; \oplus\;  C_i' \otimes B_i' \;\oplus\;
 B_i' \otimes C_i'\\
\check{E}_i&= E_i' \otimes E_i' \;\oplus\;  E_i' \otimes B_i' \;\oplus\;
 B_i' \otimes E_i'.\nonumber
\end{align}

\nd {\bf Note:} These cohomology modules are realized now by the cohomology of the spaces 
$\widehat{B}_i$, $\widehat{C}_i$ and $\widehat{E}_i$ as defined in \eqref{eqn:newbce}, and we continue to adopt 
the notation from Definition \ref{defn:tildepairs}.  Also, in order to keep the notation as simple as possible, we shall
supress explicit mention of the additive isomorphism $\theta_{(\underline{U},\underline{V})}$ from 
\eqref{eqn:generalequalsuv}, though its usage will be understood throughout the remainder of this section. 

Consider again the part of the partial diagonals given by the shuffle 
maps from \eqref{eqn:composite2}
\begin{equation}\label{eqn:shufflemap2}
 \widehat{Z}\big(K_{P\cup Q};(\underline{X},\underline{A})^{P,Q}_{P\cup Q}\big)    
 \xrightarrow{\widehat{S}}  \widehat{Z}\big(K_{P};(\underline{X},\underline{A})_{P} \big) \wedge 
\widehat{Z}\big(K_{Q};(\underline{X},\underline{A})_{ Q} \big).
\end{equation}

\nd Applying Theorem \ref{thm:cartan2}, the shuffle map for $(\underline{X},\underline{A})$ can
be described {\em additively\/} in cohomology, in terms of wedge decomposable pairs, as follows
\begin{equation}\label{eqn:therealgroups}
\widetilde{H}^{\ast}\big(\widehat{Z}\big(K_{P};(\underline{U},\underline{V})_{P} \big) \wedge 
\widehat{Z}\big(K_{Q};(\underline{U},\underline{V})_{Q}\big)\big)
\xrightarrow{\widehat{S}}
\widetilde{H}^{\ast}\big(\widehat{Z}\big(K_{P\cup Q};(\underline{U},\underline{V})^{P,Q}_{P\cup Q}\big)\big),
\end{equation}
where as in \eqref{eqn:uvshufflewd} we have, (in the notation of Definition \ref{defn:tildepairs}) 
\begin{equation}\label{eqn:uvwd}
\widehat{Z}\big(K_{P\cup Q};(\underline{U},\underline{V})^{P,Q}_{P\cup Q}\big)  \;=\;
\widehat{Z}\big(K_{P\cup Q}; (\widetilde{B}\vee \widetilde{C},\widetilde{B}\vee \widetilde{E})_{P\cup Q} \big).
\end{equation}
Figure\;1 below is a useful aid in visualizing all that follows. It displays all sets, simplices and modules which 
are relevant to the discussion of the shuffle map for CW pairs and their cohomlogical wedge decompositions.
(The numbers in square brackets label regions of the Venn diagram non-uniquely.)
\skp{0.4}
\nd \hspace{-0.1in}\includegraphics[width=6.5in]{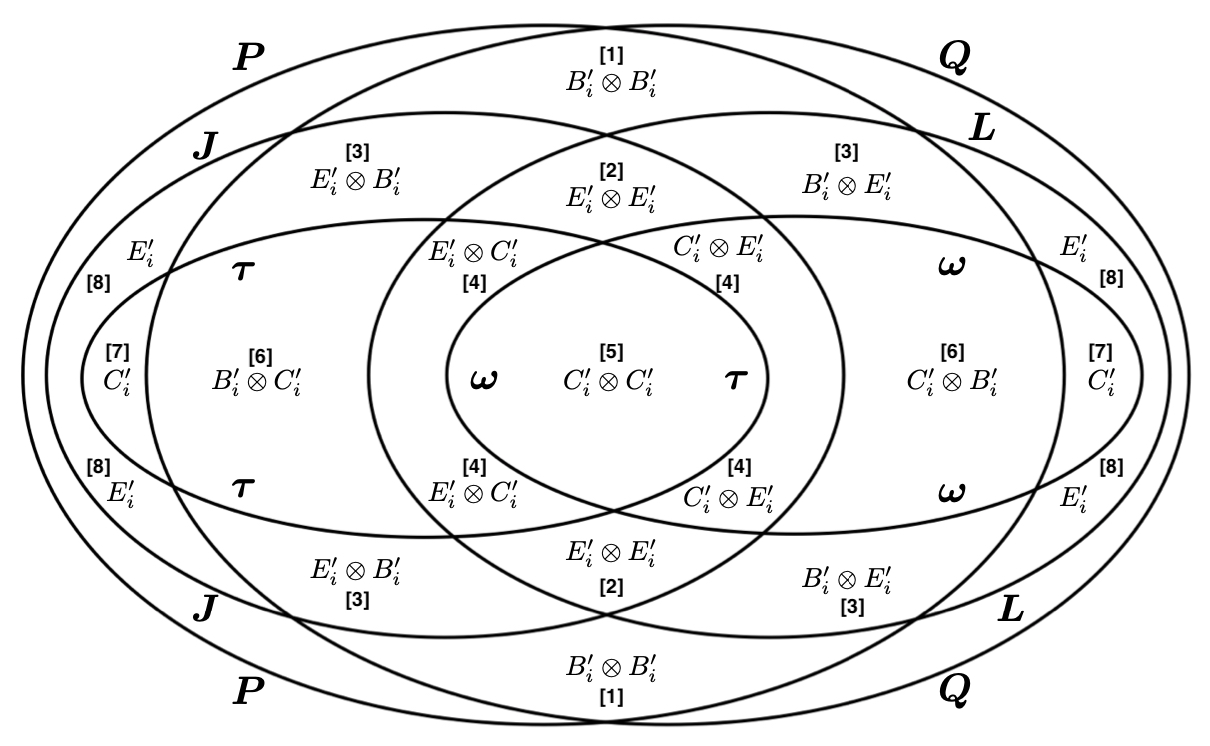}
\skp{0.3}
\centerline{{\sc Figure 1}}
\skp{0.1}
\centerline{The arrangement of modules in}
\centerline{{\fontsize{11}{11}\selectfont 
$H^{\ast}\big(\widehat{Z}\big(K_{P\cup Q};(\underline{U},\underline{V})^{P,Q}_{P\cup Q}\big)\big) =
H^{\ast}\big(\widehat{Z}\big(K_{P\cup Q}; (\widetilde{B}\vee \widetilde{C},\widetilde{B}\vee \widetilde{E})_{P\cup Q} \big)\big)$}}
\skp{0.3}
Next, we apply Theorem \ref{thm:main} to the spaces
$\widehat{Z}\big(K_{P};(\underline{U},\underline{V})_{P}$ and
$\widehat{Z}\big(K_{Q};(\underline{U},\underline{V})_{Q}\big)$
which appear on the left hand side of \eqref{eqn:therealgroups}, and then take the cohomology of each of the wedge 
summands resulting, to get the cohomological description of \eqref{eqn:shufflemap2} below,
\begin{multline}\label{eqn:cohomshuffle2}
\hspace{0.4truein}\widetilde{H}^{\ast}\big(\widehat{Z}\big(K_{J};(\underline{C},\underline{E})_J\big)\big) \otimes
\widetilde{H}^{\ast}\big(\widehat{Z}\big(K_{P- J};(\underline{B},\underline{B})_{P-J}\big)\big)\\
\otimes \widetilde{H}^{\ast}\big(\widehat{Z}\big(K_{L};(\underline{C},\underline{E})_L \big)\big)
\otimes  \widetilde{H}^{\ast}\big(\widehat{Z}\big(K_{Q- L};(\underline{B},\underline{B})_{Q-L}\big)\big)\\
\xrightarrow{\phantom{m}\widehat{S}^{\ast}\phantom{m}} \;
\widetilde{H}^{\ast}\big(\widehat{Z}\big(K_{P\cup Q}; (\widetilde{B}\vee \widetilde{C},\widetilde{B}\vee 
\widetilde{E})_{P\cup Q} \big) \big).
\end{multline}
The next lemma is now apposite. As usual, (Definition \ref{defn:tildepairs}), we set $I = J\cup L$.
\begin{lem}\label{lem:wedgeslineup}
In \eqref{eqn:cohomshuffle2} the target of the shuffle map $\widehat{S}^\ast$ is the summand
\begin{equation}\label{eqn:shufflesummand}
\widetilde{H}^{\ast}\big(\widehat{Z}\big(K_{I};(\widetilde{\underline{C}},\widetilde{\underline{E}})_I \big)\big)
\otimes  \widetilde{H}^{\ast}\big(\widehat{Z}\big(K_{P\cup Q- I};(\widetilde{\underline{B}},\widetilde{\underline{B}})_{P\cup Q- I}\big)\big)
\end{equation}
in the decomposition of $\widetilde{H}^{\ast}\big(\widehat{Z}\big(K_{P\cup Q}; (\widetilde{B}\vee \widetilde{C},\widetilde{B}\vee 
\widetilde{E})_{P\cup Q} \big) \big)$ given by Theorem \ref{thm:main}.
\end{lem}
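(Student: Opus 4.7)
The plan is to trace the Cartan decompositions from Theorem~\ref{thm:main} through the shuffle map $\widehat{S}$ (or more precisely through its cohomological pullback $\widehat{S}^\ast$) and to verify that the matching of wedge summands on the two sides forces $I = J\cup L$.

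First, I would apply Theorem~\ref{thm:main} to each of the two factors of the smash product $\widehat{Z}(K_P;(\underline{U},\underline{V})_P)\wedge \widehat{Z}(K_Q;(\underline{U},\underline{V})_Q)$; this yields a wedge decomposition indexed by pairs $(J,L)$ with $J\subseteq P$ and $L\subseteq Q$, the typical summand being the four-fold smash appearing on the left of \eqref{eqn:cohomshuffle2}. Similarly, applying Theorem~\ref{thm:main} to the other factor in the shuffle map, namely $\widehat{Z}(K_{P\cup Q};(\underline{\widetilde{B}\vee \widetilde{C}},\underline{\widetilde{B}\vee \widetilde{E}})_{P\cup Q})$, yields a wedge decomposition indexed by subsets $I\subseteq P\cup Q$, with $I$-th summand given by \eqref{eqn:shufflesummand}.

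Next I would analyze how the geometric shuffle map interacts with these two decompositions, position by position. At positions $i\in (P\cup Q)\setminus (P\cap Q)$, the pair is simply $(U_i,V_i)$ on both sides and $\widehat{S}$ acts as the identity at $i$. At positions $i\in P\cap Q$, the pair on the source side of $\widehat{S}$ is the doubled pair $(\widehat{B}_i\vee\widehat{C}_i,\widehat{B}_i\vee\widehat{E}_i) = (U_i\wedge U_i,V_i\wedge V_i)$, and $\widehat{S}$ is the canonical rearrangement sending the first smash factor to the $P$-side and the second to the $Q$-side of the target. Expanding the wedges in \eqref{eqn:newbce}, the three summands of $\widehat{C}_i$ (respectively $\widehat{E}_i$) together with the single summand of $\widehat{B}_i$ are sent by the shuffle bijectively onto the four possible placements of a $C_i$ (respectively $E_i$) or $B_i$ factor at position $i$ in the $P$-factor and in the $Q$-factor. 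These four placements correspond exactly to the four combinatorial possibilities for $i\in P\cap Q$: membership in $J\cap L$, $J\setminus L$, $L\setminus J$, or neither; Figure~1 encodes this bookkeeping.

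Collating these choices over all $i$, the shuffle carries the $I$-summand of its source into the wedge of those $(J,L)$-summands of its target for which $J\cup L = I$, and no others. Passing to cohomology reverses the direction, so $\widehat{S}^\ast$ sends the cohomology of a given $(J,L)$-summand into the cohomology of the unique $I$-summand with $I = J\cup L$, yielding the summand displayed in \eqref{eqn:shufflesummand}. The only step requiring real care is the position-by-position bookkeeping at $i\in P\cap Q$, but since the shuffle is defined precisely to distribute these factors, no substantial obstacle arises; the lemma is essentially a combinatorial identity following from distributivity of smash over wedge together with the naturality of Theorem~\ref{thm:main}.
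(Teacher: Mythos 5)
Your proposal reaches the right conclusion by essentially the same combinatorial bookkeeping the paper uses: a coordinatewise analysis of which of the factors $B_i$, $C_i$, $E_i$ sits at each position of $P\cup Q$, using the expansion \eqref{eqn:newbce} of $\widehat{B}_i,\widehat{C}_i,\widehat{E}_i$ at positions of $P\cap Q$ and the partition of $P\cup Q$ recorded in \eqref{eqn:pqandi} and Figure~1. The paper phrases this purely cohomologically, tracking which tensor factors $H^{\ast}(B_i)$ on the left of \eqref{eqn:cohomshuffle2} get absorbed into $H^{\ast}(\widehat{C}_i)$ or $H^{\ast}(\widehat{E}_i)$ (exactly those with $i$ in $\big(J\cap(Q\msetm L)\big)\cup\big(L\cap(P\msetm J)\big)\subset I$) and observing that the surviving $B$-factors are indexed by $(P\cup Q)\msetm I$.

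One caveat: you frame the argument as the \emph{geometric} shuffle map carrying the $I$-th wedge summand of its source onto the $(J,L)$-summands of its target, and then dualize. In the context of this lemma the pairs are general $(\underline{X},\underline{A})$, which are not wedge decomposable; the source and target of $\widehat{S}$ admit no geometric wedge decomposition, and the identification with the Cartan summands exists only at the level of cohomology groups via $\theta_{(\underline{U},\underline{V})}$. This is precisely the point of the Remark preceding the paper's proof (``This is not obvious from \eqref{eqn:shuffleforwd} because it is at the cohomology level only that we can replace the shuffle map\ldots''). Your bookkeeping still goes through because the shuffle acts coordinatewise on the K\"unneth factors and the additive identification with $B_i',C_i',E_i'$ is also coordinatewise, but the argument should be stated on cohomology from the outset rather than as a statement about wedge summands of spaces.
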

\begin{rem} This is not obvious from \eqref{eqn:shuffleforwd} because it is at the cohomology level only that
we can replace the shuffle map in which we are interested, \eqref{eqn:shufflemap2}, with one involving wedge 
decomposable pairs \eqref{eqn:therealgroups}.\end{rem}
\begin{proof}
Consider again the equality of sets \eqref{eqn:pqandi}, (cf. Figure\;1),
$$(P\cup Q) \msetm I \;=\; \big((P\msetm J)\cup (Q\msetm L)\big) \msetm
\big(J\cap (Q\msetm L)\big) \cup \big(L\cap (P\msetm J)\big).$$
All the factors $H^{\ast}(B_i)$ on the left hand side of \eqref{eqn:cohomshuffle2} satisfy $i \in (P\msetm J)\cup (Q\msetm L)$.
The only ones which can be paired with $H^{\ast}(C_i)$  or $H^{\ast}(E_i)$ are those for which $i$ is in the disjoint union
$\big(J\cap (Q\msetm L)\big) \cup \big(L\cap (P\msetm J)\big) \subset I$. In this case,
$$H^{\ast}(C_i)\otimes H^{\ast}(B_i) \subset H^{\ast}(\widehat{C}_i)\;\;\text{and}\;\; 
H^{\ast}(E_i)\otimes H^{\ast}(B_i) \subset H^{\ast}(\widehat{E}_i)$$
So, the factors of $H^{\ast}(B_i)$ from the left hand side of \eqref{eqn:cohomshuffle2} which are ``lost'' are precisely the 
ones for which $i \in I$. All other copies of $H^{\ast}(B_i)$ survive to appear in the right hand tensor factor of
\eqref{eqn:shufflesummand}.
\end{proof}
The main theorem of this section is next.
\begin{thm}\label{thm:mainprodthm}
 The additive isomorphism $\theta_{(\underline{U},\underline{V})}$ of \eqref{eqn:generalequalsuv} 
suffices to determine explicitly the partial diagonal map
\eqref{eqn:partialdiags}
\begin{equation*}
\widetilde{H}^{\ast}\big(\widehat{Z}\big(K_{P};(\underline{X},\underline{A})_{P} \big) \wedge 
\widehat{Z}\big(K_{Q};(\underline{X},\underline{A})_{ Q} \big)\big)
\xrightarrow{(\widehat{\Delta}_{P \cup Q}^{P,Q})^{\ast}} 
\widetilde{H}^{\ast}\big(\widehat{Z}\big(K_{P\cup Q};(\underline{X},\underline{A})_{P\cup Q} \big)\big),
\end{equation*}
and hence the cup product structure in $H^{\ast}\big(K;(\underline{X}, \underline{A})\big)$, by the description given
in subsection \ref{subsec:back}.
\end{thm}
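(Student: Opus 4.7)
The plan is to reduce to the case of wedge decomposable pairs already handled by Theorem \ref{thm:product for wdpair} and Corollary \ref{cor:wdecompprod}, and then to track the extra bookkeeping forced on us by the fact that Lemma \ref{lem:nomixing} fails for general $(\underline{X},\underline{A})$. First I would use the additive isomorphism $\theta_{(\underline{U},\underline{V})}$ of \eqref{eqn:generalequalsuv} together with Corollary \ref{cor:wedge} to relabel both the source and target of $(\widehat{\Delta}_{P\cup Q}^{P,Q})^{\ast}$ by the explicit Cartan-type wedge summands indexed by subsets $I\le P\cup Q$ and simplices $\sigma \in K_I$, plus the associated link classes.

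Next I would factor the partial diagonal, as in \eqref{eqn:composite1}--\eqref{eqn:composite2}, as $\widehat{S}\circ \widehat{\psi}^{P,Q}_{P\cup Q}$ and analyze each piece separately in cohomology. For the shuffle $\widehat{S}^{\ast}$, the work is already done: Lemma \ref{lem:wedgeslineup} identifies, for each pair $(J,L)$ with $J\subseteq P$, $L\subseteq Q$ and $I = J\cup L$, the unique target wedge summand
\begin{equation*}
\widetilde{H}^{\ast}\big(\widehat{Z}(K_I;(\underline{\widetilde{C}},\underline{\widetilde{E}})_I)\big) \otimes
\widetilde{H}^{\ast}\big(\widehat{Z}(K_{P\cup Q - I};(\underline{\widetilde{B}},\underline{\widetilde{B}})_{P\cup Q - I})\big)
\end{equation*}
of $\widetilde{H}^{\ast}\big(\widehat{Z}(K_{P\cup Q}; (\widetilde{B}\vee \widetilde{C},\widetilde{B}\vee \widetilde{E})_{P\cup Q})\big)$ (decomposed by Theorem \ref{thm:main}) into which the relevant summand lands. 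So the shuffle map is completely pinned down by the data of the decomposition.

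The remaining task, and the one step with genuine content, is to describe $(\widehat{\psi}^{P,Q}_{P\cup Q})^{\ast}$ on each summand. The map $\widehat{\psi}^{P,Q}_{P\cup Q}$ is induced by the diagonals \eqref{eqn:diag} of the pairs $(X_i,A_i)$ at coordinates $i \in P\cap Q$; applying cohomology at each such coordinate amounts exactly to the cup product $\widetilde{H}^{\ast}(X_i)^{\otimes 2}\to \widetilde{H}^{\ast}(X_i\wedge X_i)$ and $\widetilde{H}^{\ast}(A_i)^{\otimes 2}\to \widetilde{H}^{\ast}(A_i\wedge A_i)$. Using the strong free decompositions of Definition \ref{defn:strongfc}, these cup products decompose a tensor of generators from $B'_i$, $C'_i$, $E'_i$ into the submodules $\check B_i, \check C_i, \check E_i$ of \eqref{eqn:newbcexa}, which by construction are precisely the cohomologies of $\widehat B_i, \widehat C_i, \widehat E_i$ in \eqref{eqn:newbce}. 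Thus $(\widehat{\psi}^{P,Q}_{P\cup Q})^{\ast}$ on each tensor factor is just the internal product of $H^{\ast}(X_i)$ (respectively $H^{\ast}(A_i)$), re-read through the decomposition of \eqref{eqn:newbcexa}. The main obstacle is bookkeeping here: a product may, for instance, send a class in $C'_i\otimes C'_i$ into the $C'_i\otimes C'_i$ summand of $\check C_i$ but also, via the $B'_i$ part of an $X$-class, into $C'_i\otimes B'_i$ or $B'_i\otimes C'_i$. These redistributions change which coordinates are of type $C$ versus type $B$, and therefore change the index set $I$ (and the simplex $\sigma \in K_I$) that labels the wedge summand hit. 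This is exactly the ``mixing'' warned about in the paragraph preceding Lemma \ref{lem:wedgeslineup}.

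Once the redistribution is recorded, the rest is mechanical. Having moved the classes into the wedge decomposable target via $(\widehat{\psi}^{P,Q}_{P\cup Q})^{\ast}$ and then applied the shuffle as in Lemma \ref{lem:wedgeslineup}, the remaining partial-diagonal structure on the wedge decomposable pair $(\underline{\widetilde{B}\vee\widetilde{C}},\underline{\widetilde{B}\vee\widetilde{E}})_{P\cup Q}$ is described completely by Theorem \ref{thm:product for wdpair} and Corollary \ref{cor:wdecompprod}: coordinate-wise ordinary cup products on the $B$-factors (\eqref{eqn:bprod}), and the $\ast$-products of link classes in $\widetilde{H}^{\ast}\big(Z(K_I;(\underline{C},\underline{E})_I)\big)$ governed by subsection \ref{subsec:linkprods} and Lemma \ref{lem:ce}. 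Combining these three ingredients --- the internal cup products in $H^{\ast}(X_i)$ and $H^{\ast}(A_i)$ reinterpreted via \eqref{eqn:newbcexa}, the shuffle identification of Lemma \ref{lem:wedgeslineup}, and the link $\ast$-product calculus from Corollary \ref{cor:wdecompprod} --- yields an explicit formula for $(\widehat{\Delta}_{P\cup Q}^{P,Q})^{\ast}$ in terms of the Cartan generators. Finally, the cup product in $\widetilde{H}^{\ast}\big(Z(K;(\underline{X},\underline{A}))\big)$ is recovered from these partial diagonals via \eqref{eqn:hochster} and the discussion of subsection \ref{subsec:back}, completing the proof.
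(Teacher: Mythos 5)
Your overall strategy coincides with the paper's: factor $\widehat{\Delta}_{P\cup Q}^{P,Q}$ as $\widehat{S}\circ\widehat{\psi}^{P,Q}_{P\cup Q}$, use $\theta_{(\underline{U},\underline{V})}$ and Corollary \ref{cor:wedge} to name everything by Cartan summands, handle $\widehat{S}^{\ast}$ by Lemma \ref{lem:wedgeslineup}, and reduce $(\widehat{\psi}^{P,Q}_{P\cup Q})^{\ast}$ to the internal cup products of $\widetilde{H}^{\ast}(X_i)$ and $\widetilde{H}^{\ast}(A_i)$ read through the decomposition \eqref{eqn:newbcexa}. You also correctly identify the crux: the cup products on $P\cap Q$ redistribute classes among $B_i'$, $C_i'$, $E_i'$ and thereby move a monomial to a wedge summand with a different index set $I$ and possibly a different simplex $\sigma$.

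The gap is that you stop exactly where the theorem's content begins. Saying that the index set changes and that ``the rest is mechanical'' does not determine the image class, because each monomial carries a link factor $\alpha\ast\beta\in\widetilde{H}^{\ast}(\Sigma|lk_{\sigma}(K_I)|)$, and when $I$ shrinks to $I\msetm\{l\}$ (a $b$-class absorbing an $e$- or $c$-slot) or $\sigma$ grows to $\sigma\cup\{s\}$ (a $c$-class appearing in a $B$-slot on $P\cap Q$), that link factor must be transported to the cohomology of a \emph{different} link. The paper supplies precisely this: the restriction $\rho_{I,s}\colon lk_{\sigma\cup\{s\}}K_{I\cup\{s\}}\to lk_{\sigma}K_I$, the inclusion $\iota_l\colon lk_{\sigma}K_{I\msetm\{l\}}\to lk_{\sigma}K_I$, the commutativity statement of Lemma \ref{lem:linkchange2} guaranteeing the two operations can be composed unambiguously, the observation that $c_s^{[1]}$ can be nonzero only when $\sigma\cup\{s\}\in K_{I\cup\{s\}}$, and the resulting eight-case enumeration (a)--(h) of monomials with their transported links $\iota_l^{\ast}(\alpha\ast\beta)$, $\rho_{I,s}^{\ast}(\alpha\ast\beta)$, $(\iota_l\circ\rho_{I\msetm l,s})^{\ast}(\alpha\ast\beta)$, etc. Without these maps your formula is not explicit. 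Two smaller points: your closing sentence applies $(\widehat{\psi}^{P,Q}_{P\cup Q})^{\ast}$ before the shuffle, but in cohomology the order is $(\widehat{\Delta}_{P\cup Q}^{P,Q})^{\ast}=(\widehat{\psi}^{P,Q}_{P\cup Q})^{\ast}\circ\widehat{S}^{\ast}$; and there is no ``remaining partial-diagonal structure'' to be supplied by Theorem \ref{thm:product for wdpair} after both maps have been applied --- that theorem holds only for genuinely wedge decomposable pairs (Lemma \ref{lem:nomixing} fails here), and the only ingredient legitimately imported from the wedge decomposable case is the link $\ast$-product calculus of subsection \ref{subsec:linkprods}.
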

\nd The rest of this section is devoted to the proof of Theorem \ref{thm:mainprodthm}.
Consider a class $x_P\otimes x_Q$ in the left hand side of \eqref{eqn:therealgroups}
\begin{equation}
x_P\otimes x_Q \in \widetilde{H}^{\ast}\big(\widehat{Z}\big(K_{P};(\underline{U},\underline{V})_{P} \big) \wedge 
\widehat{Z}\big(K_{Q};(\underline{U},\underline{V})_{Q}\big)\big).
\end{equation}
Theorem \ref{thm:main} describes this class as a sum of terms of the form $u\otimes v$ in 
{\small\begin{multline}\label{eqn:summandofxpxq}
\widetilde{H}^{\ast}\big(\widehat{Z}\big(K_{J};(\underline{C},\underline{E})_J\big)\big) \otimes
\widetilde{H}^{\ast}\big(\widehat{Z}\big(K_{P- J};(\underline{B},\underline{B})_{P-J}\big)\big)\\
\otimes \widetilde{H}^{\ast}\big(\widehat{Z}\big(K_{L};(\underline{C},\underline{E})_L \big)\big)
\otimes  \widetilde{H}^{\ast}\big(\widehat{Z}\big(K_{Q- L};(\underline{B},\underline{B})_{Q-L}\big)\big),
\end{multline}}
each of which has the form below by Corollary \ref{cor:wedge}
\begin{enumerate}\itemsep1.5mm
\item[] $\ds{u = \alpha \otimes  \motimes_{i\in \tau}c_i \otimes \motimes_{j\in J\msetm \tau}e_j \otimes\motimes_{j \in P\msetm J}b_j
\;\;\in\widetilde{H}^{\ast}(\Sigma{|lk_{\tau}K_J|}\; \otimes\; \widetilde{H}^{\ast}(\widehat{D}_{\underline{C},\underline{E}}^{J}(\tau))
\motimes_{j \in P\msetm J}\hspace{0mm}\widetilde{H}^{\ast}(B_j)}$ 
\item[] $\ds{v = \beta \otimes  \motimes_{i\in \omega}c_i \otimes 
\motimes_{j\in L\msetm \omega}e_j  \otimes \motimes_{j \in Q\msetm L}b_j  \;\;\in \widetilde{H}^{\ast}(\Sigma{|lk_{\omega}K_L|}\;
\otimes\; \widetilde{H}^{\ast}(\widehat{D}_{\underline{C},\underline{E}}^{L}(\omega)) 
\motimes_{j \in Q\msetm L}\hspace{0mm}\widetilde{H}^{\ast}(B_j)}$ 
\end{enumerate}
\skp{0.2}
It will be convenient to call $\alpha$ and $\beta$ {\em indexing links\/}. Next, we apply the shuffle map in cohomology \eqref{eqn:cohomshuffle2}, ignoring cohomological degree and keeping in mind 
\eqref{eqn:newbce} and Figure\;1. This gives the description below in which cohomology elements are labelled by the regions
in Figure\;1 to which they belong.  Recall that \eqref{eqn:diag} restricts cohomology products to the set $ P\cap Q$ only, in particular,
to Regions [1], [2], [3], [5] and [5] of Figure\;1 only. From  \eqref{eqn:newbcexa} we see that products on Region [4] are not 
supported. (Note that some relabelling in the names of classes is necessary to avoid ambiguities.)
\begin{align}\label{eqn:tensorofclasses}
\nonumber\hspace{0.2truein}\widehat{S}^{\ast}(u\otimes v) =&\;\widehat{S}^{\ast}(\alpha\otimes \beta)\\ 
&\otimes\motimes_{i \in [5]}(c_i\otimes \bar{c}_i)
\otimes\motimes_{j \in [6]}(c_j\otimes b_j)
\otimes\motimes_{k \in [2]}(e_k\otimes \bar{e}_k)
\otimes\motimes_{l \in [3]}(e_l\otimes b_l) \\ \nonumber
&\otimes\motimes_{s\in [1]}(b_s\otimes \bar{b}_s)
\hspace{0.15in}\otimes\motimes_{\{t,u,v\}\;{\rm otherwise}}(c_t\otimes e_u \otimes b_v) 
\end{align}
\nd in the group
\begin{equation}\label{eqn:targetofshuffle}
\widetilde{H}^{\ast}\big(\widehat{Z}\big(K_{P\cup Q};(\underline{U},\underline{V})^{P,Q}_{P\cup Q}\big)\big)  \;=\;
\widetilde{H}^{\ast}\big(\widehat{Z}\big(K_{P\cup Q}; (\widetilde{B}\vee \widetilde{C},\widetilde{B}\vee 
\widetilde{E})_{P\cup Q} \big)\big).
\end{equation}
Notice that by \eqref{eqn:newbcexa} no products arising from Region [4] in Figure\;1 can be supported.
Next, we compose with the map induced by \eqref{eqn:composite1},
\begin{equation}\label{eqn:psi}
\widetilde{H}^{\ast}\big(\widehat{Z}\big(K_{P\cup Q};(\underline{X},\underline{A})^{P,Q}_{P\cup Q}\big)\big) 
\xrightarrow{(\widehat{\psi}^{P,Q}_{P\cup Q})^{\ast}} 
\widetilde{H}^{\ast}\big(\widehat{Z}\big(K_{P\cup Q};(\underline{X},\underline{A})_{P\cup Q} \big)\big) 
\end{equation}
to get the full partial diagonal map induced by \eqref{eqn:partialdiags},

\begin{equation}
\widetilde{H}^{\ast}\big(\widehat{Z}\big(K_{P};(\underline{X},\underline{A})_{P} \big) \wedge 
\widehat{Z}\big(K_{Q};(\underline{X},\underline{A})_{ Q} \big)\big)
\xrightarrow{(\widehat{\Delta}_{P \cup Q}^{P,Q})^{\ast}} 
\widetilde{H}^{\ast}\big(\widehat{Z}\big(K_{P\cup Q};(\underline{X},\underline{A})_{P\cup Q} \big)\big) 
\end{equation}
and hence a computation of 
\begin{equation}
u\ast v = 
(\widehat{\Delta}_{P \cup Q}^{P,Q})^{\ast}(u\otimes v) =
\big((\widehat{\psi}^{P,Q}_{P\cup Q})^{\ast}\circ
\widehat{S}^{\ast}\big)(u\otimes v)
\end{equation}
which is the part of $x_P \ast x_Q$ in the summand \eqref{eqn:summandofxpxq}. 
The homomorphism $\widehat{\psi}^{P,Q}_{P\cup Q})^{\ast}$ multiplies the terms in
$\widehat{S}^{\ast}(u\otimes v)$ corresponding to the marked regions.
The consequences are discussed below. Recall that the target of the map 
$(\widehat{\psi}^{P,Q}_{P\cup Q})^{\ast}$, \eqref{eqn:psi}, is given by Corollary \ref{cor:wedge},
{\fontsize{11}{11}\selectfont \begin{multline*}
\widetilde{H}^{\ast}\big(\widehat{Z}(K(_{P\cup Q};(\underline{X},\underline{A}\big)_{P\cup Q}))\;
\xrightarrow{\cong}\; \widetilde{H}^{\ast}\big(\widehat{Z}(K_{P\cup Q};(\underline{U},\underline{V})_{P\cup Q})\big)\\
\xrightarrow{\cong}
\moplus_{I\subset P\cup Q}\big(\moplus_{\sigma \in K_{I}}\widetilde{H}^{\ast}(\Sigma{|lk_{\sigma}(K_{I})|})
\otimes \widetilde{H}^{\ast}\big(\widehat{D}_{\underline{C},\underline{E}}^{I}(\sigma))\big)
\motimes  \widetilde{H}^{\ast}\big(\widehat{Z}(K_{(P\cup Q)\msetm I};(\underline{B},\underline{B})_{[m]-I})\big).
\end{multline*}}

Next we analyze the monomials which can appear in 
$(\widehat{\Delta}_{P \cup Q}^{P,Q})^{\ast}(u\otimes v)$. 
The properties of the modules $B_i', C_i'$ and $E_i'$ from Definition \ref{defn:strongfc}
are used strongly below. All cup products are either in $\widetilde{H}^{\ast}(X_i) \cong B_i'\oplus C_i'$ or in 
$\widetilde{H}^{\ast}(A_i) \cong B_i'\oplus E_i'$. 
\skp{0.2}
Below is a list of all cup products which can occur when $(\widehat{\psi}^{P,Q}_{P\cup Q})^{\ast}$  is applied to
the class $\widehat{S}^{\ast}(u\otimes v)$ in the group \eqref{eqn:targetofshuffle}:
\skp{0.3}
\begin{enumerate}\itemsep2mm
\item For $i \in$ Region $[5]$, $c_i\otimes \bar{c}_j \mapsto c_{i}^{[5]} \in C_i'$.
\item For $j \in$ Region $[6]$, $c_i\otimes b_j \mapsto c_{j}^{[6]} \in C_j'$.
\item For $k \in$ Region $[2]$, $e_k\otimes \bar{e}_k \mapsto e_{k}^{[2]} + b_k^{[2]} \in E_k'\oplus B_k'$.
\item\label{itm:fourth} For $l \in$ Region $[3]$, $e_l\otimes b_l \mapsto e_{l}^{[3]} + b_l^{[3]} \in E_l'\oplus B_l'$.
\item For $s\in$ Region $[1]$, $b_s\otimes \bar{b}_s \mapsto b_{s}^{[1]} + c_s^{[1]} \in B_s'\oplus C_s'$.
\end{enumerate}
\skp{0.2}
\nd The next two lemmas will allow us to keep track of the links indexing the monomials.

\begin{lem}
Let $K$ be a simplicial complex on vertices $[m]$, $I \subset [m]\msetm \{s\}$, $\sigma \in K_I$ and
$\sigma \cup \{s\} \in K_{I\cup \{s\}}$. Then there is a natural map
\begin{equation}
\begin{array}{lccc}
\rho_{I,s} \colon\hspace{-3mm}&lk_{\sigma \cup \{s\}}K_{I\cup \{s\}} & \longrightarrow & lk_{\sigma}K_I\\[0.3mm]
                        &\tau           & \mapsto     &\tau \cap I
\end{array}
\end{equation}
\end{lem}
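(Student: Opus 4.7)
The plan is essentially just to unwind the definitions and verify that the assignment $\tau \mapsto \tau\cap I$ respects both the link condition and the simplicial structure. The statement is a well-definedness claim, so the work is entirely set-theoretic.

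First I would check that $\rho_{I,s}(\tau)$ lies in $\mathrm{lk}_{\sigma}(K_I)$ whenever $\tau \in \mathrm{lk}_{\sigma\cup\{s\}}(K_{I\cup\{s\}})$. By Definition \ref{defn:link}, the hypothesis says $\tau\cup\sigma\cup\{s\}\in K_{I\cup\{s\}}$, which means in particular $\tau\cup\sigma\cup\{s\}\in K$ with all vertices contained in $I\cup\{s\}$. Since $\sigma\in K_I$ we have $\sigma\subseteq I$, so
\[
(\tau\cap I)\cup\sigma \;\subseteq\; (\tau\cup\sigma\cup\{s\})\cap I \;\subseteq\; \tau\cup\sigma\cup\{s\}.
\]
Because $K$ is closed under taking faces, $(\tau\cap I)\cup\sigma\in K$; and all its vertices lie in $I$, so $(\tau\cap I)\cup\sigma\in K_I$. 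By Definition \ref{defn:link} again, this gives $\tau\cap I\in\mathrm{lk}_{\sigma}(K_I)$, so $\rho_{I,s}$ is well defined on vertex sets of simplices.

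Next I would verify that $\rho_{I,s}$ is a simplicial map. If $\tau'\subseteq\tau$ is a face in $\mathrm{lk}_{\sigma\cup\{s\}}(K_{I\cup\{s\}})$, then $\tau'\cap I\subseteq\tau\cap I$, so $\rho_{I,s}(\tau')$ is a face of $\rho_{I,s}(\tau)$. On vertices, $\rho_{I,s}$ is the restriction of the identity on $I$ (with $s$ sent to the empty vertex), so it clearly takes vertex sets to vertex sets and respects containments.

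Finally, naturality of $\rho_{I,s}$ comes from the fact that the construction uses only the ambient simplicial complex $K$ and the tautological intersection with $I$. If $K\hookrightarrow K'$ is an inclusion of simplicial complexes on $[m]$, then the diagram relating $\rho_{I,s}$ for $K$ and for $K'$ commutes on the nose because intersection with $I$ is defined at the level of vertex sets independently of which complex the simplex is viewed in. No step here requires an actual obstacle to overcome; the only subtlety is remembering that the face $(\tau\cap I)\cup\sigma$ is a subset of the known simplex $\tau\cup\sigma\cup\{s\}$, which is what allows us to invoke closure of $K$ under taking faces.
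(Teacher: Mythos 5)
Your argument is correct and matches the paper's own proof: both reduce to the observation that $\tau\cup\sigma\cup\{s\}\in K_{I\cup\{s\}}$, pass to the face $(\tau\cap I)\cup\sigma$ by closure of $K$ under taking faces, and note its vertices lie in $I$. The extra checks of simpliciality and naturality are routine and consistent with what the paper leaves implicit.
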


\nd{\em Proof:\/}  Let $\tau \in lk_{\sigma \cup \{s\}}K_{I\cup \{s\}}$, then $\tau \cup (\sigma \cup \{s\}) \in K_{I\cup \{s\}}$
and so $\tau \cup \sigma \in K_{I\cup \{s\}}$ implying $(\tau \cap I) \cup \sigma \in K_I$. \hspace{4.8truein}
    $\square$

\begin{lem}\label{lem:linkchange2}
Let $K$ be a simplicial complex on vertices $[m]$, $I \subset [m]$, $\sigma \in K_I$ and
and $l \in I$. Then there is a natural inclusion,
\begin{equation}
\begin{array}{lccc}
\iota_l \colon\hspace{-3mm}&lk_{\sigma}K_{I\msetm \{l\}} & \longrightarrow & lk_{\sigma}K_I.                        
\end{array}
\end{equation}
and the diagram below commutes.
\begin{equation}\label{eqn:linkinclusion1}
\hspace{0.0cm}\begin{tikzcd}
lk_{\sigma}K_{I\msetm \{l\}}
\arrow[r, "\iota_l"]
&lk_{\sigma}K_I\\ 
lk_{\sigma\cup \{s\}}K_{(I\cup \{s\})\msetm \{l\}}
\arrow[r, "\iota_l"] \arrow[u, "\rho_{I\msetm \{l\},s}"'] 
&lk_{\sigma \cup \{s\}}K_{I\cup \{s\}}
\arrow[u, "\rho_{I, \{s\}}"]
\end{tikzcd}
\end{equation}
 \end{lem}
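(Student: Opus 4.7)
The plan is to verify the lemma directly from the definitions, since both maps are set-theoretic restrictions on simplices and the lemma is essentially a compatibility check. First I would construct $\iota_l$. Observe that $K_{I\msetm \{l\}}$ is by definition the full subcomplex of $K_I$ on the vertex set $I\msetm \{l\}$, so $K_{I\msetm \{l\}} \subseteq K_I$ as simplicial complexes. Given $\tau \in lk_{\sigma}K_{I\msetm \{l\}}$, Definition \ref{defn:link} says $\tau \cup \sigma \in K_{I\msetm \{l\}}$, and this containment ascends to $\tau \cup \sigma \in K_I$, witnessing $\tau \in lk_{\sigma}K_I$. The resulting map on vertices is an inclusion and sends simplices to simplices, so $\iota_l$ is a well-defined inclusion of simplicial complexes. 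One minor bookkeeping point worth noting is that we must have $\sigma \in K_{I\msetm\{l\}}$ for the source to be nonempty in the intended sense; equivalently $l\notin\sigma$, which is the only case in which the lemma will be applied.

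Next I would verify the commutativity of \eqref{eqn:linkinclusion1} by chasing an element. Let $\tau \in lk_{\sigma\cup\{s\}}K_{(I\cup\{s\})\msetm \{l\}}$. Then $\tau \subseteq (I\cup\{s\})\msetm \{l\}$, so in particular $l\notin\tau$. Going right then up takes $\tau$ first to itself in $lk_{\sigma\cup\{s\}}K_{I\cup\{s\}}$ via $\iota_l$, and then to $\tau \cap I$ via $\rho_{I,\{s\}}$. Going up then right takes $\tau$ to $\tau \cap (I\msetm\{l\})$ via $\rho_{I\msetm\{l\},s}$ and then to itself via $\iota_l$. The two outcomes agree because $l\notin\tau$ forces $\tau \cap I = \tau \cap (I\msetm\{l\})$.

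The main obstacle here is not a mathematical one but a notational one: one must keep straight which index set each link is computed in and how the restriction to $I\msetm\{l\}$ interacts with the conditions defining the link of $\sigma\cup\{s\}$. The clean observation that vertex removal commutes with the intersection in the definition of $\rho$ because the removed vertex does not lie in $\tau$ anyway is what makes the verification immediate.
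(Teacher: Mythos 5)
Your proof is correct and follows essentially the same route as the paper: both arguments construct $\iota_l$ from the containment $K_{I\msetm\{l\}}\subseteq K_I$ and verify commutativity by chasing a simplex $\tau$ around the square using $\rho_{I,s}(\tau)=\tau\cap I$. In fact you are slightly more careful than the paper at the one nontrivial point, namely that $\rho_{I\msetm\{l\},s}(\tau)=\tau\cap(I\msetm\{l\})$ coincides with $\tau\cap I$ precisely because $l\notin\tau$, a step the paper elides by writing $\tau\cap I$ directly.
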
 

\nd{\em Proof:\/} Notice first that
$$lk_{\sigma\cup \{s\}}K_{(I\cup \{s\})\msetm \{l\}} = lk_{\sigma\cup \{s\}}K_{(I\msetm \{l\})\cup \{s\}}.$$

\nd because $I\cup \{s\})\msetm \{l\} = (I\msetm \{l\})\cup \{s\}$.  
Now let $\tau \in k_{\sigma\cup \{s\}}K_{(I\cup \{s\})\msetm \{l\}}$, then 
$$(\rho_{I,s} \circ \iota_{l})(\tau) = \rho_{I,s}(\tau) = \tau\cap I.$$
\nd On the other hand, 
$$(\iota_{l} \circ \rho_{I\msetm l,s})(\tau) = \iota_{l}(\tau\cap I) = \tau\cap I.$$
\nd as well.\hspace{5.6truein}$\square$

We are in a position now to enumerate and describe the monomials, including their indexing links, which arise from 
\eqref{eqn:tensorofclasses}. Recall that
$\big((\widehat{\psi}^{P,Q}_{P\cup Q})^{\ast}\circ
\widehat{S}^{\ast}\big)(\alpha \otimes \beta) = \alpha\ast \beta$ is described in subsection \ref{subsec:linkprods}. 
 The subscripts below are preserved from \eqref{eqn:tensorofclasses}. 

\nd \begin{enumerate}[(a)]
\item $(\alpha\ast \beta)\otimes c_{i}^{[5]} \otimes c_{j}^{[6]} \otimes e_{k}^{[2]} \otimes e_{l}^{[3]} \otimes b_{s}^{[1]}$
\item[] There are no changes to the indexing link $\alpha\ast \beta$ here.
\item[]
\item $\rho_{I,s}^{\ast}( \alpha \ast \beta) \otimes c_{i}^{[5]} \otimes c_{j}^{[6]} \otimes 
e_{k}^{[2]} \otimes e_{l}^{[3]} \otimes c_{s}^{[1]}$
\item[]  The class  $c_s^{[1]}$ in item (5) above will be zero unless the simplex $\nu = \sigma\cup \{s\}$ exists 
in $K_{I\cup \{s\}}$. In which case, the indexing link $\alpha\ast \beta$ must change to $\rho_{I,s}^{\ast}(\alpha\ast \beta)$.
Here, the simplex $\nu = \sigma \cup \{s\}$ exists in $K_{I\cup \{s\}}$ and so $\nu$ is a full subcomplex of $K$, in which case,
$Z\big(\nu;(X,A)\big)= \mprod_{i_k\in \nu}X_{i_k}$ 
\nd retracts off $Z(K;(X,A))$ and the product is occurring in the subring $\ds{H^{\ast}\big(\mprod_{i_k\in \nu}X_{i_k}\big)}$.
\item[]
\item\label{itm:third}$\iota_l^{\ast}(\alpha\ast \beta)\otimes c_{i}^{[5]} \otimes c_{j}^{[6]} \otimes e_{k}^{[2]} 
\otimes b_{l}^{[3]} \otimes b_{s}^{[1]}$.
\item[]  In this case the appearance of the class $b_{l}^{[3]}$ from item (\ref{itm:fourth}) above removes the factor 
$\widetilde{H}^{\ast}(E_l)$ from $\widetilde{H}^{\ast}\big(\widehat{D}_{\underline{C},\underline{E}}^{I}(\sigma)\big)$,
which was supporting the class $e_l^{[3]}$. This changes the set $I \subset P\cup Q$ to the set $I\msetm \{l\}$ and so
the indexing link $\alpha\ast \beta$ changes to $\iota^{\ast}_{l}(\alpha\ast \beta)$.
\item[]
\item $(\iota_{l} \circ \rho_{I\msetm l,s})^{\ast}(\alpha\ast \beta)\otimes c_{i}^{[5]} \otimes c_{j}^{[6]} 
\otimes e_{k}^{[2]} \otimes b_{l}^{[3]} \otimes c_{s}^{[1]}$
\item[]  The class  $c_s^{[1]}$ in item (5) above will be zero unless the simplex $\nu = \sigma\cup \{s\}$ exists 
in $K_{I\cup \{s\}}$. We have also the appearance of $b_{l}^{[3]}$ as in item (\ref{itm:third}), necessitating a change from
$I$ to $I\msetm\{l\}$. According to Lemma \ref{lem:linkchange2}, the indexing link changes from $\alpha\ast \beta$
to $(\iota_{l} \circ \rho_{I\msetm l,s})^{\ast}(\alpha\ast \beta)$.
\item[]
\item\label{itm:fifth} $\iota_k^{\ast}(\alpha\ast \beta)\otimes c_{i}^{[5]} \otimes c_{j}^{[6]} \otimes b_{k}^{[2]} \otimes e_{l}^{[3]} \otimes b_{s}^{[1]}$
\item[]  This time the link is altered by the appearance of $b_{k}^{[2]}$ replacing a class in $E_k'$, so $I$ is 
replaced with $I\msetm\{k\}$ and the link $\alpha\ast \beta$ is changed to $\iota_k^{\ast}(\alpha\ast \beta)$.
\item[]
\item $(\iota_{k} \circ \rho_{I\msetm k,s})^{\ast}(\alpha\ast \beta)\otimes c_{i}^{[5]} \otimes c_{j}^{[6]} \otimes b_{k}^{[2]} 
\otimes e_{l}^{[3]} \otimes c_{s}^{[1]}$
\item[]  The class  $c_s^{[1]}$ in item (5) above will be zero unless the simplex $\nu = \sigma\cup \{s\}$ exists 
in $K_{I\cup \{s\}}$. We have also the appearance of $b_{k}^{[2]}$ as in item (\ref{itm:fifth}), necessitating a change from
$I$ to $I\msetm\{k\}$. According to Lemma \ref{lem:linkchange2}, the indexing link changes from $\alpha\ast \beta$
to $(\iota_{k} \circ \rho_{I\msetm k,s})^{\ast}(\alpha\ast \beta)$.
\item[]
\item $(\iota_{k} \circ\iota_{l})^{\ast}(\alpha\ast \beta)\otimes c_{i}^{[5]} \otimes c_{j}^{[6]} 
\otimes b_{k}^{[2]} \otimes b_{l}^{[3]} \otimes b_{s}^{[1]}$
\item[]   This time we have the terms $b_{k}^{[2]}$ and $b_{l}^{[3]}$ appearing, requiring  a change from
$I$ to $I\msetm\{l,k\}$. The indexing link changes from $\alpha\ast \beta$
to $(\iota_{k} \circ\iota_{l})^{\ast}(\alpha\ast \beta)$.
\item[]
\item $c_{i}^{[5]} \otimes c_{j}^{[6]} \otimes b_{k}^{[2]} \otimes b_{l}^{[3]} \otimes c_{s}^{[1]}$
\item[]  Here, all three link changing terms  $b_{k}^{[2]}, b_{l}^{[3]}$ and $c_{s}^{[1]}$
appear. In the manner above, this alters the link from  $\alpha\ast \beta$ to 
$(\iota_l \circ \iota_k \circ \rho_{I\msetm \{l,k\},s})^{\ast}(\alpha\ast \beta)$.
\end{enumerate}
This completes the proof of Theorem \ref{thm:mainprodthm}.

\subsection{An example}\label{subsec:exm}
The methods of subsection \ref{subsec:prodxa} are used now to compute multiplicative structure in
Example \ref{exm:cp4additive} for the case $K = K_2$ the simplicial complex consisting of two
discrete points, so that $[m] = [2]$. This is the polyhedral product $Z(K_2;(M_f, \mathbb{C}P^3)\big)$, where 
$M_f$ is the mapping cylinder of the map
\begin{equation}
 f\colon \mathbb{C}P^3 \rightarrow \mathbb{C}P^{3}/\mathbb{C}P^1 
 \xhookrightarrow{\iota} \mathbb{C}P^{8}/\mathbb{C}P^1
 \end{equation}\label{eqn:mapcyl2}
 and the map $\iota$ is the inclusion of the bottom two cells.  Here
 $$\widetilde{H}^{\ast}(M_f) \cong k\{b^{4},b^{6},c^{8},c^{10},c^{12},c^{14},c^{16}\} \;\;{\rm and}\;\;
 \widetilde{H}^{\ast}(\mathbb{C}P^{3}) \cong k\{e^{2},b^{4},b^{6}\},$$
 \nd where {\em cohomological degree is denoted by a superscript\/}. According to (Definition \ref{defn:strongfc}), we have
 \begin{enumerate}\itemsep2mm
 \item[]  $B'_1 = k\{b_1^4,b_1^6\}, \;B'_2 = k\{b_2^4,b_2^6\}$
 \item[]  $C'_1 = k\{c_1^{8},c_1^{10},c_1^{12},c_1^{14},c_1^{16}\}, \; C'_2 = k\{c_2^{8},c_2^{10},c_2^{12},c_2^{14},c_2^{16}\}$
 \item[]  $E'_1 = k\{e_1^2\}, \;E'_2 = k\{e_2^2\}$
 \end{enumerate}
\nd where the subscript corresponds to the vertex and distinguishes the two different copies of the modules.
Note also that in $\widetilde{H}^{\ast}(M_f)$ there is the non-trivial cup product $c_i^8 \smile c_i^8 = c_i^{16}$ and in 
$\widetilde{H}^{\ast}(\mathbb{C}P^{3})$ we have $e_i^2\smile b_i^4 = b_i^6$.

We consider the case $P = Q = \{1,2\}$, $J = \{1\}$ and $L= \{1,2\}$, $\sigma = \tau = \omega = \{1\}$ and begin by 
determining the shuffle map $\widehat{S}^{\ast}$ on terms in \eqref{eqn:summandofxpxq}, $P\msetm J = \{2\}$ and
$Q\msetm L = \varnothing$,
\begin{align*}
&{ u = \alpha_{1,\{1\}}\otimes c_1^8 \otimes b_2^4} \;\in
\widetilde{H}^{\ast}(|\Sigma{lk_{\{1\}}(K_{\{1\}}|)}\otimes 
\widetilde{H}^{\ast}(\widehat{D}_{\underline{C},\underline{E}}^{\{1\}}(\{1\})) \otimes \widetilde{H}^{\ast}(B_2)\\
&{ v =\alpha_{1,\{1,2\}}\otimes c_1^8 \otimes e_2^2} \;\in
\widetilde{H}^{\ast}(|\Sigma{lk_{\{1\}}(K_{\{1,2\}}|)}\otimes 
\widetilde{H}^{\ast}(\widehat{D}_{\underline{C},\underline{E}}^{\{1,2\}}(\{1\}))
\end{align*}
\nd where here, the groups are summands of 
$\widetilde{H}^{\ast}\big(\widehat{Z}\big(K_{P};(\underline{U},\underline{V})_{P} \big)$ and 
$\widehat{Z}\big(K_{Q};(\underline{U},\underline{V})_{Q}\big)\big)$ given by Theorem \ref{thm:main}.
We apply now the shuffle map as in \eqref{eqn:tensorofclasses} to  get
\begin{align*}
{ \widehat{S}^{\ast}(u \otimes v)} &= 
\widehat{S}^{\ast}(\alpha_{1,\{1\}}\otimes \alpha_{1,\{1,2\}}) \otimes (c_1^8 \otimes c_1^8) \otimes
(b_2^4\otimes e_2^2)\\
&=\alpha_{1,\{1,2\}}\otimes (c_1^8 \otimes c_1^8) \otimes
(b_2^4\otimes e_2^2)
\end{align*}
\nd by \eqref{eqn:firstalpha}.
Next we apply the map $(\widehat{\psi}^{P,Q}_{P\cup Q})^{\ast}$ \eqref{eqn:psi}, which evaluates the cohomological 
diagonals, to give the case of item(\ref{itm:third}) above because $e_i^2\smile b_i^4 = b_i^6$. We get
$${ \widehat{\Delta}_{P \cup Q}^{P,Q})^{\ast}(u \otimes v)} = \iota^{\ast}_2(\alpha_{1,\{1,2\}})\otimes c_1^{16}\otimes b_2^{6}
= \alpha_{1,\{1\}}\otimes c_1^{16}\otimes b_2^{6}.$$

\bibliographystyle{amsalpha}

\end{document}